\documentclass[12pt]{amsart}

\usepackage{amsthm}
\newtheorem{theorem}{Theorem}[section]
\newtheorem{lemma}[theorem]{Lemma}

\theoremstyle{definition}

\theoremstyle{remark}
\newtheorem{remark}[theorem]{Remark}

\counterwithin*{section}{part}

\usepackage{amssymb}
\usepackage{empheq}
\usepackage{stmaryrd}
\usepackage{url}
\usepackage{esvect}
\usepackage{float}

\DeclareMathOperator{\N}{\mathbb{N}}

\DeclareMathOperator{\R}{\mathbb{R}}
\DeclareMathOperator*{\argmin}{\arg\!\min} 

\newcommand{\norm}[1]{\lVert #1 \rVert}

\newcommand{\lowerbound}{\frac{1}{\gamma - 8 \eta}}
\newcommand{\upperbound}{\frac{1}{4\eta}}

\usepackage[left=2.0cm,%
                right=2.0cm,%
                top=2.5cm,%
                bottom=3.5cm,%
                headheight=12pt,%
                a4paper]{geometry}%

\begin{document}

\title[Relaxed-inertial proximal point algorithm on Hadamard manifolds]{A relaxed-inertial proximal point algorithm for strongly quasiconvex equilibrium problems on Hadamard manifolds}

\author[L.M. Despr\'es and N. Pischke]{Luisa Marie Despr\'es${}^{\MakeLowercase a}$ and Nicholas Pischke${}^{\MakeLowercase b}$}
\date{\today}
\maketitle
\vspace*{-5mm}
\begin{center}
{\scriptsize 
${}^a$ Department of Mathematics, Technische Universit\"at Darmstadt,\\
Schlossgartenstra\ss{}e 7, 64289 Darmstadt, Germany,\\ 
${}^b$ Department of Computer Science, University of Bath,\\
Claverton Down, Bath, BA2 7AY, United Kingdom,\\
E-mails: luisa.despres@stud.tu-darmstadt.de, nnp39@bath.ac.uk}
\end{center}

\maketitle
\begin{abstract}
We study a proximal point type method for approximating solutions to equilibrium problems generated by pseudomonotone and strongly quasiconvex bifunctions over Hadamard manifolds, that is complete simply connected Riemannian manifolds of nonpositive sectional curvature. Next to the usual proximal step, the method we consider also incorporates an inertia step together with a subsequent over-relaxation, the latter of which is treated in the context of Hadamard manifolds, to our knowledge, for the first time. Making use of a quantitative approach towards such proximal methods for strongly quasiconvex optimization developed by the authors in previous work, we in particular provide effective arguments for the convergence of the method, yielding explicit, fast and very uniform rates of convergence for the distance of the iterates towards the solution. These results extend previous work by Grad, Lara and Marcavillaca on such a method over finite-dimensional Euclidean spaces for the first time to a nonlinear setting, with the quantitative estimates already being novel in the Euclidean case. In particular, our effective approach allows for a fine-grained view on the assumptions on the surrounding objects, so that we are able to either weaken or even fully discharge some previous assumptions.
\end{abstract}
\noindent
{\bf Keywords:} Equilibrium problems; Strong quasiconvexity;  Hadamard manifolds; Rates of convergence; Proof mining\\ 
{\bf MSC2020 Classification:} 49J40, 47J25, 90C26, 03F10

\section{Introduction}

\subsection{Background and motivation}

Many problems in applied mathematics, including in particular minimization problems, fixed point problems and variational inequalities, can be formulated as an equilibrium problem, that is as a problem of the form
\begin{equation*}
\text{find } x^{*}\in K\text{ such that } f(x^{*},y) \geq 0 \text{ for all } y\in K,\tag{EP}\label{equil}
\end{equation*}
for a suitable bifunction $f: K \times K \rightarrow \R$ and an associated constraint set $K \subseteq X$ in a space $X$. Also called ``Ky Fan inequalities'' after the pioneering work of Fan \cite{Fan72}, these problems have been studied deeply, and we refer to \cite{Blum1994,Cotrina18,Flores01,IusemKassaySosa09,IusemLara19,Lopez12,Oettli97} next to many other works, for such investigations.

Many prominent methods used to approximate solutions of convex equilibrium problems are based on (modifications of) the proximal point scheme. Originally developed for convex minimization problems and associated inclusion problems of maximally monotone operators by Martinet \cite{Martinet70}, Rockafellar \cite{RockTyr76} as well as Brezis and Lions \cite{BrezisLions78} over Hilbert spaces $(X,\norm{\cdot})$, this method iteratively applies the so-called proximal map 
\[
\mathrm{Prox}_{\beta h}(x) :=\argmin_{y\in K}\left\{h(y) + \frac{1}{2\beta}\norm{y-x}^2\right\}
\]
of a convex, lower-semicontinous function $h: K \rightarrow \R$ on a closed and convex subset $K\subseteq X$ to form the sequence $\{x_k\}_k$ defined by $x_{k+1}:=\mathrm{Prox}_{\beta_k h}(x_k)$, given some fixed starting point $x_0\in K$ and using an associated parameter sequence $\{\beta_k\}_k\subseteq (0,\infty)$. Under suitable assumptions on the parameters, $\{x_k\}_k$ weakly converges to a minimum of $h$ (see e.g.\ \cite{BauschCom2017}).

Many extensions of this proximal point type approach to (convex) equilibrium problems rely on utilizing a suitable analog of this proximal map defined by regularizing the whole bifunction, also called the resolvent of the bifunction (see already \cite{Moudafi99,MoudafiThera99} but also \cite{BURACHIK2012,IusemSosa10,Khatibzadeh2016,Konnov03,Moudafi03}). However, in \cite{IusemLara2021} (see also \cite{HieuDuongThai21,IusemMohebbi2020}), Iusem and Lara define an algorithm that in each iteration applies a proximal  regularization only to the second (convex) argument of the bifunction, that is it utilizes $\mathrm{Prox}_{\beta_kf(x_k,\cdot)}(x_k)$ to define a new iterate. Indeed, as illustrated in \cite{IusemLara2021}, this type of proximal step is particularly suitable for equilibrium problems with relaxed convexity requirements, in particular bifunctions which are strongly quasiconvex in their right argument. Here, a function $h:K\to\mathbb{R}$ is called strongly quasiconvex if
\[
h((1-\lambda) x+\lambda y)\leq \max\{h(x),h(y)\}-\lambda(1-\lambda)\frac{\gamma}{2}\norm{x-y}^2
\]
for all $x,y\in K$ and $\lambda\in [0,1]$, for a given value $\gamma>0$. This class of functions already goes back to the work of Polyak \cite{Polyak1966} and represents a restriction of the class of quasiconvex functions, that is the class of functions for which the above inequality holds with $\gamma = 0$. Nevertheless, it still accommodates various functions of practical interest, such as the Euclidean norm and its square root (see \cite{Jov1996,Lara2022}), and it in particular contains nonconvex functions (as e.g.\ witnessed by $\mathrm{max}\{\sqrt{\norm{\cdot}}, \norm{\cdot}^2-k\}$ for $k \in \N$, cf.\ \cite[Remark 18]{Lara2022}). We refer to \cite{GLM2025,Lara2022} for further discussions on the relationship between strongly quasiconvex functions and various other classes of (non)convex functions.

This class of strongly quasiconvex functions has various benefits over the class of plain quasiconvex functions mentioned above. In particular, many iterative procedures known from convex settings are still reasonably well-behaved (we in particular refer to \cite{Lara2022} for further discussions on this topic). This in particular is true for the proximal point algorithm, which was studied for these functions by Lara \cite{Lara2022}, who proved a convergence theorem over Euclidean space $\mathbb{R}^d$ (cf.\ \cite[Theorem 10]{Lara2022}). Contrary to the usual proximal point method for convex functions, the proximal map $\mathrm{Prox}_{\beta h}$ is however not single-valued anymore even over Euclidean spaces (cf.\ \cite[Remark 6]{Lara2022}), so that the proximal step is transformed into a selection $x_{k+1}\in \mathrm{Prox}_{\beta_k h}(x_k)$ (which is well-defined as $\mathrm{Prox}_{\beta h}$ is still non-empty). It is also in this setting that Iusem and Lara study the method sketched above in \cite{IusemLara2021}, that is they consider the iteration
\[
x_{k+1}\in \argmin_{y\in K}\left\{f(x_k,y) + \frac{1}{2\beta_k}\norm{y-x_k}^2\right\}:=\mathrm{Prox}_{\beta_kf(x_k,\cdot)}(x_k)
\]
for a (pseudomonotone and suitably Lipschitz) bifunction $f:K\times K\to\mathbb{R}$ on a closed and convex domain $K$ which is only strongly quasiconvex in its second argument, and establish its convergence over Euclidean space $\mathbb{R}^d$ (cf.\ \cite[Theorem 3.1]{IusemLara2021}). In particular, a key benefit of the above method to e.g.\ a method which considers a regularization of the bifunction as a whole in the style of the resolvent of a bifunction is that the above selection is immediately well-defined whereas the well-definedness of such resolvents of bifunctions is quite non-trivial in the context of strong quasiconvexity (see in particular the discussion in \cite{DespresPischke2026}).

The suitability of the above proximal point type method for equilibrium problems induced by strongly quasiconvex bifunctions is also further affirmed through the subsequent work of Grad, Lara and Marcavillaca \cite{GradLaraMarca24}, where they show that the method can be efficiently extended with both inertia terms and over-relaxations (extending previous work on minimization problems for strongly quasiconvex functions \cite{GradLaraMarcavillaca2023}), illustrating that more sophisticated formulations of proximal point type schemes are still viable in the context of the above class of nonconvex functions. Concretely, given a (pseudomonotone and suitably Lipschitz) bifunction $f:K \times K \to\mathbb{R}$ over an affine subspace $K\subseteq \mathbb{R}^d$ that is strongly quasiconvex in its right argument, the relaxed inertial proximal point algorithm considered in \cite{GradLaraMarca24} iteratively computes points $\{x_k\}_k$, $\{y_k\}_k$ and $\{z_k\}_k$ via
\[
\begin{cases}
y_k := x_k + \alpha_k(x_k - x_{k-1}),\\
z_{k} \in \mathrm{Prox}_{\beta_k f(y_k,\cdot)}(y_k),\\
x_{k+1} := (1-\rho_k)y_k + \rho_k z_k,
\end{cases}
\]
from given starting points $x_{-1}, x_0\in K$ and sequences $\{\alpha_{k}\}_k$, $\{\rho_k\}_k$, $\{\beta_k\}_k$ of parameters satisfying suitable conditions (discussed in detail later), and in \cite{GradLaraMarca24} the authors establish its convergence over $\mathbb{R}^d$ (cf.\ \cite[Theorem 11]{GradLaraMarca24}). 

While choosing $\alpha_k = 0$ and $\rho_k = 1$ for all $k \in \N$ allows one to recover the ``bare'' proximal point algorithm from \cite{IusemLara2021}, the strength of the above method lies of course in the use of non-trivial parameter sequences. In that context, the computation of $y_k$ in the above scheme, i.e.\ the so-called ``extrapolation step'', incorporates an inertia term as originally considered by Antipin \cite{Antipin1994}, Alvarez \cite{Alvarez2000} as well as Alvarez and Attouch \cite{AlvarezAttouch01} to improve the proximal point algorithm for monotone inclusions, motivated as a discretization of the ``heavy ball with friction'' system already studied by Polyak \cite{Polyak1966} (see also \cite{AttouchGoudouRedont2000}). The computation of $x_{k+1}$, i.e.\ the over-relaxation, follows a strategy first used by Eckstein and Bertsekas \cite{EcksteinBertsekas1992} for the proximal point algorithm for monotone inclusions and has the particular benefit that it may indeed speed up the convergence of the method (see e.g.\ \cite{Bertsekas1982,EcksteinFerris1998}), in particular in combination with inertia terms which was first considered by Alvarez \cite{Alvarez04}. For convex equilibrium problems, such a relaxed-inertia proximal point type method was, to our knowledge, first considered in \cite{HieuDuongThai21} (see also the related second order dynamical system studied in \cite{VanVinTran22}).

\subsection{The contributions of the present paper and related work}

In the present paper, we study the relaxed inertial proximal point method as presented above for suitable strongly quasiconvex equilibrium problems over Hadamard manifolds, that is complete simply connected Riemannian manifolds of nonpositive sectional curvature. In many ways, Hadamard manifolds provide a very suitable but nontrivial framework for convex analysis and optimization in nonlinear contexts, and we refer to \cite{Bacak2014,Bacak23} and the references therein for discussions in that direction.

In particular, the proximal point algorithm for convex minimization and its extension for inclusion problems for monotone vector fields have been studied deeply over Hadamard manifolds. We refer in particular \cite{FerreiraOliveira2002,LiLopezMartinMarquez2009,PapaQuirozOliveira2009} (see also \cite{Bac2013}), next to many others. Extensions of this method for convex equilibrium problems are similarly quite well-studied and can in particular be found in \cite{BentoCruzNetoLopesMeloFilho2024,BentoCruzNetoMelo2022,COLAO201261} (see also \cite{KHATIBZADEHMOHEBBI2021}), all of which however use the resolvent of a bifunction in various guises, that is they approach the problem by regularizing the bifunction as a whole. For quasiconvex functions, the literature is largely limited to the setting of minimization problems, as e.g.\ studied in \cite{Quiroz24,QuirzoAlexCusiMac20,PapaQuirozOliveira2009,QO2012a,QO2012b}. In particular, work on problems induced by strongly quasiconvex functions in these contexts is very limited so far and, to our knowledge, only comprises the recent works \cite{Pischke2025} of the second author and \cite{DespresPischke2026} of the authors which, respectively, extend the convergence results for the proximal point type method for strongly quasiconvex minimization \cite{Lara2022} and equilibrium problems \cite{IusemLara2021} obtained in the linear setting to Hadamard spaces, that is complete geodesic metric spaces of nonpositive curvature (see  \cite{Bacak2014,Bacak23,BridsonHaefliger1999}), a class that in particular includes Hadamard manifolds but is neither restricted to finite-dimensionality nor smoothness assumptions.

The present work can thus be seen as an extension of the results obtained in \cite{DespresPischke2026,Pischke2025} to the setting of \cite{GradLaraMarca24}. In particular, we rely on the general approach initiated in \cite{DespresPischke2026,Pischke2025} to also study the present manifold variant of the method from \cite{GradLaraMarca24}, which concretely takes the following form: Over a Hadamard manifold $X$ with Riemannian metric $d$, we iteratively compute points $\{x_k\}_k$, $\{y_k\}_k$ and $\{z_k\}_k$ via
\[
\begin{cases}
y_k := \exp_{x_k}(-\alpha_k\exp^{-1}_{x_k}x_{k-1}),\\
z_{k} \in \argmin_{x\in K}\left\{f(y_k,x)+ \frac{1}{2\beta_k}d^2(y_k,x)\right\},\\
x_{k+1} := \exp_{z_k}((1-\rho_k)\exp^{-1}_{z_k}y_k),
\end{cases}
\]
from given starting points $x_{-1}, x_0\in K$ and sequences $\{\alpha_{k}\}_k$, $\{\rho_k\}_k$, $\{\beta_k\}_k$ of parameters as before (see Section \ref{sec:alg} for details). Here, $\exp$ is the exponential map of the underlying manifold (see Section \ref{sec:HadamardMani} for details) and, correspondingly, $K\subseteq X$ is now a metrically closed set that contains all geodesic lines determined by any two points of $K$, representing a suitable analog for affine subspaces in Hadamard manifolds (see Section \ref{sec:alg} for details).

Again, for the special choices of $\alpha_k = 0$ and $\rho_k = 1$ for all $k \in \N$, the above method reduces to the ``bare'' proximal point algorithm previously studied by the authors in \cite{DespresPischke2026} over Hadamard spaces (extending \cite{IusemLara2021}). The use of the exponential map to formulate inertia terms via
\[
\exp_{x_k}(-\alpha_k\exp^{-1}_{x_k}x_{k-1})
\]
as above in the context of Hadamard manifolds goes back at least to \cite{KhammahawongChaipunyaKumam2023} (see also \cite{ChangYaoLiuZhao2023,SahuSharmaGautam2026}), and we crucially rely on the approach taken therein to handle the above extrapolation step in this paper. While one often finds relaxations, that is steps of the form
\[
\exp_{z_k}((1-\rho_k)\exp^{-1}_{z_k}y_k)
\]
as above for the choice of $\rho_k\in [0,1]$, in the literature on optimization methods in Hadamard manifolds (as in e.g.\ \cite{KhammahawongChaipunyaKumam2023,SahuSharmaGautam2026}, among many others), this does not seem to be true for over-relaxations, that is for the choice of $\rho_k\in [0,2]$, where the present paper seems to provide the first such treatment, to our knowledge. Perhaps unsurprisingly, we for this rely on similar machinery as needed to handle inertia terms in this nonlinear context.

Our main result is then a convergence theorem for this method over Hadamard manifolds (see Theorems \ref{THEOREM3} and \ref{THEOREM3a}) under suitable conditions on the parameters and the bifunction (derived from \cite{GradLaraMarca24}, as discussed in detail in Section \ref{sec:alg} later). In particular, this extends the convergence result obtained in \cite{GradLaraMarca24} for the first time to a nonlinear setting. Modeled after the approach of \cite{DespresPischke2026,Pischke2025}, our convergence proofs are quite elementary and in particular effective, contrary to \cite{GradLaraMarca24}, avoiding any kind of infinitary arguments like (weak) compactness. This was not only instrumental for obtaining our results in a nonlinear context but also allows for the formulation of explicit and fast rates of convergence (similar to \cite{DespresPischke2026,Pischke2025}) for the sequences $\{x_k\}$, $\{y_k\}$ and $\{z_k\}$ generated by the above method. More precisely, we in particular establish the following quantitative result: 
\begin{equation*}
\forall \varepsilon > 0\ \forall k \geq \left(\left \lceil \frac{C}{\varepsilon^2} \right \rceil+ M(E \varepsilon^2) + 2\right) \left(d(x_k, x^{*}), d(y_k, x^{*}), d(z_k, x^{*})< \varepsilon\right),
\end{equation*}
where $x^*$ is the (unique) solution to the associated equilibrium problem. The corresponding rate $\left(\left \lceil C/\varepsilon^2 \right \rceil+ M(E \varepsilon^2) + 2\right)$ is in particular very uniform,  depending only on a rate of convergence $M$ for the sum $\sum_{k=0}^\infty \alpha_k d^2(x_k,x_{k-1})<+\infty$ (see Section \ref{sec:convergence} later on) as well as two constants $C$, $E$ which can be explicitly calculated  only in terms of a few surrounding numerical parameters of the problem. To our knowledge, these are the first effective results for this method, even in the original setting of Euclidean spaces from \cite{GradLaraMarca24}. Similar to \cite{DespresPischke2026}, in the course of our convergence proofs, we also show how some semicontinuity assumptions on the bifunction featuring in \cite{GradLaraMarca24} can be avoided (see Remark \ref{rem:generalization} later on).

While we have chosen to formulate our results over Hadamard manifolds for simplicity, it should be noted that the present approach does not at all depend on the smoothness of the space $X$ or any other differential structure and could in principle be carried out, mutatis mutandis, over a suitable class Hadamard spaces that is discussed in detail in Remark \ref{rem:Hilbert} later on. In particular, this class has no dimensionality restrictions in general and in particular contains Hilbert spaces and some Hilbert manifolds. In that context, the results of the present paper in particular show that (an appropriate formulation of) the method above strongly converges even in these infinite-dimensional spaces, similar to \cite{DespresPischke2026,Pischke2025}. Further, the quantitative estimates remain exactly the same. A detailed discussion of the Hilbert space case can be found in the master thesis of the first author \cite{Despres2026}, on which this paper is based.

Lastly, while we focus only on equilibrium problems in this paper, our approach should immediately apply to minimization problems, not only trivially by instantiating the problem and method above, but also in the context of the generalized parameter assumptions used in that context in \cite{GradLaraMarcavillaca2023}. We however do not spell this out here.

\subsection{Proof mining}

The present quantitative results have been obtained using methods from proof mining, a program in mathematical logic that utilizes tools from proof theory, the foundations of mathematics, to classify and extract the computational content of prima facie ``non-computational'' proofs from various areas of core mathematics. For a comprehensive overview of this area and some of the many applications to nonlinear analysis, we refer to the seminal monograph \cite{Kohlenbach2008} as well as to the survey \cite{Kohlenbach18Survey}. As typical for many applied results of proof mining, the present paper is presented without any explicit reference to logic and does not require any such background. We instead refer to the master thesis of the first author \cite{Despres2026} for a detailed discussion on the concrete use of logical methods in the context of the present paper.

\subsection{Outline of the present paper}

The paper is now organized as follows: In Section \ref{sec:prelim}, we give the necessary background on Hadamard manifolds and the related objects as well as notions of convexity and quasiconvexity, together with some basic lemmas needed for the main results. Section \ref{sec:alg} carefully introduces the method sketched above in Hadamard manifolds and discusses the central assumptions on the surrounding objects. Finally, Section \ref{sec:convergence} then provides the corresponding (quantitative) convergence theorems.

\section{Preliminaries}\label{sec:prelim}

\subsection{Hadamard manifolds}\label{sec:HadamardMani}

We now first give the (very few) required preliminaries for Riemannian and Hadamard manifolds. For further exposition beyond the results given here, we refer to \cite{Sakai1996} (as well as \cite{Bacak2014,BridsonHaefliger1999}).

In the following, and for the rest of the paper unless stated otherwise, we take $X$ to be a finite dimensional Hadamard manifold, that is a complete simply connected Riemannian manifold of nonpositive sectional curvature, with a Riemannian metric $\langle\cdot,\cdot\rangle$. We denote its tangent space at a point $x\in X$ by $T_xX$ and we denote the norm on $T_xX$ induced by the Riemmanian metric by $\norm{\cdot}$. The Riemannian distance on $X$, defined as usual, is denoted by $d$. It induces the original topology on $X$ and, in particular, $(X,d)$ is a complete metric space. 

Given an interval $I\subseteq\mathbb{R}$, a geodesic in $X$ is a map $\gamma:I\to X$ such that
\[
d(\gamma(t),\gamma(s))=c\vert t-s\vert
\]
for all $t,s\in I$, where $c$ is called the speed of $\gamma$. If $I=[0,1]$, we say that $\gamma$ is a normalized geodesic and we say that $\gamma$ joins $x=\gamma(0)$ and $y=\gamma(1)$. Note that in this case $c=d(x,y)$. If $I=[0,\infty)$, we call $\gamma$ a geodesic ray and if $I=\mathbb{R}$, we call $\gamma$ a geodesic line. By the Cartan-Hadamard theorem (cf.\ \cite[Theorem 4.1]{Sakai1996}), for any two points $x,y\in X$, there exists a unique normalized geodesic joining $x$ to $y$. We call a subset $K \subseteq X$ convex if with $x,y \in K$ also $\gamma(t)\in K$ for all $t\in [0,1]$ where $\gamma$ is the unique normalized geodesic joining $x$ to $y$.

Given a point $x\in X$ and vector $\xi\in T_xX$, the exponential map is defined by $\exp_x \xi=\gamma_{x,\xi}(1)$ where $\gamma_{x,\xi}$ is the unique normalized geodesic with $\gamma_{x,\xi}(0)=x$ and velocity $\dot{\gamma}_{x,\xi}(0)=\xi$. It can be shown that $\exp_x(t\xi)=\gamma_{x,\xi}(t)$ for all $t\in [0,1]$. The map $\exp_x:T_xX\to X$ is a diffeomorphism and in particular has an inverse $\exp^{-1}_x:X\to T_xX$ where it holds that $d(x,y)=\norm{\exp^{-1}_xy}$ for all $x,y\in X$. It follows that if $x\neq y$, then $r(t)=\exp_x (t\exp^{-1}_xy)$ is a geodesic line in $X$ with $r(0)=x$ and $r(1)=y$.

Metrically, $(X,d)$ is a Hadamard space, that is a complete geodesic metric space of nonpositive curvature, also known as a (complete) $\mathrm{CAT}(0)$ space (see in particular \cite{Bacak2014,BridsonHaefliger1999}). As such, its so-called quasi-inner product as introduced by Berg and Nikolaev \cite{BergNikolaev2008}, that is the function
\[
\langle\vv{xy},\vv{uv}\rangle:= \frac{1}{2}\big(d^2(x,v)+d^2(y,u)-d^2(x,u)-d^2(y,v)\big),
\]
satisfies the following analogue (cf.\ \cite[Corollary 3]{BergNikolaev2008}) of the Cauchy-Schwarz inequality 
\begin{equation}
\langle\vv{xy},\vv{uv}\rangle \leq d(x,y)d(u,v)\tag{CS}\label{CS}
\end{equation}
for all $x,y,u,v \in X$, next to the following four further (in particular characterizing, see \cite[Proposition 14]{BergNikolaev2008}) properties for any $x,v,u,v\in X$: $\langle \vv{xy},\vv{xy}\rangle = d^2(x,y)$; $\langle \vv{xy},\vv{uv}\rangle = \langle \vv{uv},\vv{xy}\rangle $;  $\langle \vv{xy},\vv{uv}\rangle = -\langle \vv{yx},\vv{uv}\rangle $; $\langle \vv{xy},\vv{uv}\rangle + \langle \vv{xy},\vv{vw}\rangle = \langle \vv{xy},\vv{uw}\rangle$. 

Beyond the above, we rely on two key inequalities for the Riemannian distance. The first is an extension of the so-called Bruhat-Tits $\mathrm{CN}$-inequality \cite{BruhatTits1972} to geodesics, which in particular characterizes Hadamard spaces:

\begin{lemma}[{cf.\ \cite[Theorem 1.3.3]{Bacak2014}}]\label{CN}
Let $x,y,w\in X$ and, given $t\in [0,1]$, define $z:=\exp_{x} (t\exp^{-1}_{x}y)$. Then
\[
d^2(z,w) \leq  (1-t)d^2(x,w)+t d^2(y,w)-t(1-t)d^2(x,y).
\]
\end{lemma}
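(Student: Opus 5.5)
The plan is a comparison-triangle argument, reducing the inequality to the Euclidean parallelogram identity via the CAT(0) property. The cases $x=y$ and $t\in\{0,1\}$ are trivial, so assume $x\neq y$ and $t\in(0,1)$.

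First, note that $\gamma(s):=\exp_x(s\exp^{-1}_xy)$, $s\in[0,1]$, is the unique normalized geodesic joining $x$ to $y$, by the Cartan--Hadamard theorem and the facts recalled above. In particular $z=\gamma(t)$ satisfies
\[
d(x,z)=t\,d(x,y),\qquad d(z,y)=(1-t)\,d(x,y).
\]

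Next, form a comparison triangle $\bar x,\bar y,\bar w$ in $\mathbb{R}^2$ with the same side lengths as the geodesic triangle $x,y,w$. Let $\bar z=(1-t)\bar x+t\bar y$ be the comparison point of $z$. In the Euclidean plane, expanding $\|(1-t)(\bar x-\bar w)+t(\bar y-\bar w)\|^2$ gives the identity
\[
\|\bar z-\bar w\|^2=(1-t)\|\bar x-\bar w\|^2+t\|\bar y-\bar w\|^2-t(1-t)\|\bar x-\bar y\|^2 .
\]

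The key step is the CAT(0) comparison inequality $d(z,w)\le\|\bar z-\bar w\|$, which holds because $X$ has nonpositive sectional curvature. Metrically, this is the statement that $(X,d)$ is a CAT(0) space, which the paper cites from the standard references. Substituting this inequality and the equal side lengths into the Euclidean identity yields the claim.

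This comparison inequality is the main obstacle if one insists on deriving it from curvature. In that case I would argue via Toponogov's comparison theorem, or via the Rauch comparison, to get that $\exp_z$ does not decrease distances: for $u,v\in T_zX$,
\[
d(\exp_z u,\exp_z v)\ge\|u-v\|.
\]
With the tangent vectors $u=\exp^{-1}_zx$ and $v=\exp^{-1}_zy$, which are antiparallel with $u=-\tfrac{t}{1-t}v$, I would then combine this with the first variation formula to compute $d^2(z,w)$ against the vector $\exp_z^{-1}w$. Since this is textbook material, I would cite it, as the paper does, from Ba\v{c}\'ak's monograph, rather than reprove it.
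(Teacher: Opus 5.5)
Your proposal is correct and is the standard comparison-triangle proof (the CAT(0) inequality applied to the vertex $w$ and the point $z$ on $[x,y]$, combined with the Euclidean identity). This is exactly the argument behind the result from Ba\v{c}\'ak's monograph that the paper cites without proof.

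In your Riemannian fallback, the first variation formula is not needed. Set $c=\exp_z^{-1}w$ and add $(1-t)$ times $d^2(x,w)\ge\|u-c\|^2$ to $t$ times $d^2(y,w)\ge\|v-c\|^2$. The cross terms cancel because $(1-t)u+tv=0$, and $(1-t)\|u\|^2+t\|v\|^2=t(1-t)d^2(x,y)$, so the inequality follows directly.
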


The next is a kind of dual to the former, formulated for negative values. In particular, it will be key for our handling of the inertia terms as well as the over-relaxations.

\begin{lemma}[{cf.\ \cite[Proposition 9]{SahuSharmaGautam2026}}]\label{revCN}
Let $x,y,w\in X$ and, given $t\in [0,1]$, define $z:=\exp_{x} (-t\exp^{-1}_{x}y)$. Then
\[
d^2(z,w) \leq  (1+t)d^2(x,w)-t d^2(y,w)+t(1+t)d^2(x,y).
\]
\end{lemma}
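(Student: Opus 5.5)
Since $\exp_x(s\exp^{-1}_xy)$, $s\in\mathbb{R}$, is a geodesic line through $x$ and $y$, the plan is to read $x$ as a convex combination of $z$ and $y$ on that line and then apply Lemma \ref{CN} to that combination. Assume $x\neq y$ and $t>0$; otherwise $z=x$ and the claim is immediate from $t\geq 0$. Put $r(s):=\exp_x(s\exp^{-1}_xy)$. Then $r$ is a constant-speed geodesic line with $r(0)=x$, $r(1)=y$ and $r(-t)=z$, and its speed is $d(x,y)$. Because $-t\le 0\le 1$, the point $x$ lies on the geodesic segment from $z$ to $y$. From $d(z,x)=t\,d(x,y)$, $d(x,y)$ and $d(z,y)=(1+t)d(x,y)$, the parameter of $x$ along the segment $[z,y]$ is $\lambda=t/(1+t)\in[0,1)$.

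The one point that needs care is that this restricted segment is the normalized geodesic used in Lemma \ref{CN}. Reparametrizing $r$ on $[-t,1]$ affinely to $[0,1]$ gives a geodesic joining $z$ to $y$. By uniqueness of geodesics (Cartan--Hadamard) it is the unique normalized geodesic between them. Hence $x=\exp_z(\lambda\exp^{-1}_zy)$ with $\lambda=t/(1+t)$, and I expect this identification to be the main (mild) obstacle.

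Lemma \ref{CN}, applied with base point $z$, endpoint $y$ and parameter $\lambda$, then gives
\[
d^2(x,w)\le \tfrac{1}{1+t}d^2(z,w)+\tfrac{t}{1+t}d^2(y,w)-\tfrac{t}{(1+t)^2}d^2(z,y).
\]
Substituting $d^2(z,y)=(1+t)^2d^2(x,y)$ and multiplying by $1+t$ yields
\[
(1+t)d^2(x,w)\le d^2(z,w)+t\,d^2(y,w)-t(1+t)d^2(x,y).
\]
Solving this for $d^2(z,w)$ seems to give the reverse of the claimed inequality. So I would recheck the direction: Lemma \ref{CN} gives an \emph{upper} bound on $d^2(x,w)$, which is a \emph{lower} bound on $d^2(z,w)$.

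The stated inequality therefore cannot come from Lemma \ref{CN} alone. Instead I would expand in normal coordinates at $x$, where $\exp^{-1}_xz=-t\exp^{-1}_xy$. Nonpositive curvature (comparison/Toponogov) gives
\[
d^2(z,w)\le\norm{\exp^{-1}_xz-\exp^{-1}_xw}^2 ,
\qquad
d^2(y,w)\ge \norm{\exp^{-1}_xy-\exp^{-1}_xw}^2 .
\]
Writing $\xi=\exp^{-1}_xy$ and $\omega=\exp^{-1}_xw$, the Euclidean identity
\[
\norm{-t\xi-\omega}^2=(1+t)\norm{\omega}^2-t\norm{\xi-\omega}^2+t(1+t)\norm{\xi}^2
\]
then gives the claim. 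The same identity underlies the Euclidean case. The step to justify is therefore the pair of comparison inequalities at the base point $x$, which follow from the fact that $\exp_x$ does not decrease distances and that the tangent-space comparison is exact for distances measured from $x$.
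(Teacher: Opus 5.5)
\textbf{There is a genuine gap, and it cannot be closed: the inequality as stated is false on non-flat Hadamard manifolds.}

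\textbf{The failing step.} In your normal-coordinate argument, the comparison $d^2(z,w)\le\norm{\exp^{-1}_xz-\exp^{-1}_xw}^2$ goes the wrong way. In a Hadamard manifold, $\exp_x$ is non-contracting: $d(\exp_xu,\exp_xv)\ge\norm{u-v}$ for all $u,v\in T_xX$. This is exactly the fact you cite as justification. With $\xi=\exp^{-1}_xy$ and $\omega=\exp^{-1}_xw$, it gives $d(z,w)\ge\norm{-t\xi-\omega}$, just as it gives $d(y,w)\ge\norm{\xi-\omega}$. Feeding the correct directions into your (correct) Euclidean identity yields only
\[
d^2(z,w)\ \ge\ \norm{-t\xi-\omega}^2\ \ge\ (1+t)d^2(x,w)-t\,d^2(y,w)+t(1+t)d^2(x,y).
\]
This is the same reverse inequality your first paragraph obtained from Lemma \ref{CN}. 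Exactness of the tangent-space comparison for distances measured from $x$ does not help, because $d(z,w)$ is not such a distance.

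\textbf{Why no proof exists.} Your first diagnosis was the right one. By your computation with Lemma \ref{CN}, the claimed bound would force equality in Lemma \ref{CN} on the segment from $z$ to $y$, for every $w$. For $t=1$ the claim is exactly the reverse of the midpoint CN inequality, so together they would give the parallelogram law.

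A concrete counterexample: in the hyperbolic plane of curvature $-1$, take $x,y,z$ on a geodesic line with $d(x,y)=d(x,z)=1$ and $z\neq y$, so $z=\exp_x(-\exp^{-1}_xy)$ and $t=1$. Let $w$ be the point at distance $1$ from $x$ on the geodesic through $x$ perpendicular to that line. The hyperbolic Pythagorean theorem gives $\cosh d(y,w)=\cosh d(z,w)=\cosh^2 1$, hence $d^2(z,w)=d^2(y,w)\approx 2.290$. The right-hand side is $2\cdot 1-2.290+2\cdot 1\approx 1.710$, so the inequality fails.

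Apart from the trivial cases $t=0$ or $x=y$, the inequality holds (with equality) only in flat situations, such as Euclidean or Hilbert spaces, or when $w$ lies on the line through $x$ and $y$. In general only the reverse bound you derived is available.

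The paper gives no proof of this lemma; it only quotes \cite[Proposition 9]{SahuSharmaGautam2026}, so there was no argument for you to recover. Note that the later uses of the lemma rely on this false upper bound: the case $\rho_k>1$ in Lemma \ref{GProposition6} and the inertial estimate in Lemma \ref{madness2}.
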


While Proposition 9 in \cite{SahuSharmaGautam2026} states the above as a general and abstract result, it should be noted that it already implicitly appears in the proof of Theorem 1 in \cite{KhammahawongChaipunyaKumam2023}.

\subsection{Quasiconvexity over Hadamard manifolds}

Over a Hadamard manifold $X$, a function $h: X \rightarrow \R$ is called \emph{convex} if 
\begin{equation*}
(h\circ\gamma)(\lambda)\leq (1-\lambda) h(\gamma(0)) + \lambda h(\gamma(1))
\end{equation*}
for any geodesic $\gamma:[0,1]\to X$ and any $\lambda \in [0,1]$. Going beyond convex functions, the present paper will focus on the weaker class of quasiconvex functions and their so-called strongly quasiconvex variants. In that context, a function $h: X \rightarrow \R$ is called
\begin{itemize}
\item[(i)]\emph{quasiconvex} if 
\begin{equation*}
(h\circ\gamma)(\lambda) \leq \mathrm{max}\{h(\gamma(0)),h(\gamma(1))\}
\end{equation*}
for any geodesic $\gamma:[0,1]\to X$ and any $\lambda \in [0,1]$,
\item[(ii)]\emph{strongly quasiconvex} with modulus $\gamma > 0$ if 
\begin{equation*}
(h\circ\gamma)(\lambda) \leq \mathrm{max}\{h(\gamma(0)),h(\gamma(1))\}- \lambda(1-\lambda)\frac{\gamma}{2}d^2(\gamma(0),\gamma(1))
\end{equation*}
for any geodesic $\gamma:[0,1]\to X$ and any $\lambda \in [0,1]$.
\end{itemize}

Next to notions of convexity, we will also need the following notions of semicontinuity: A function $h:X \rightarrow \mathbb{R}$ is called \emph{lower semicontinuous} (lsc) at $x \in X$ if 
\begin{equation*}
h(x) \leq \liminf_{k \rightarrow \infty} h(x_k)
\end{equation*}
for any sequence $\{x_k\}_k \subseteq X$ with $x_k \rightarrow x$. If instead it holds that 
\begin{equation*}
h(x) \geq \limsup_{k \rightarrow \infty} h(x_k)
\end{equation*}
for any sequence $\{x_k\}_k \subseteq X$ with $x_k \rightarrow x$, $h$ is called \emph{upper semicontinuous} (usc) at $x \in X$.

Given any function $h: X \rightarrow \R$, we define its proximal map $\mathrm{Prox}_{\beta h}$ by 
\begin{equation*}
\mathrm{Prox}_{\beta h}(x) := \argmin_{y\in X}\left\{h(y) + \frac{1}{2\beta}d^2(x,y)\right\}
\end{equation*}
for $\beta>0$ and $x \in X$. As remarked in the introduction, $\mathrm{Prox}_{\beta h}$ is no longer a singleton for strongly quasiconvex functions but is non-empty under suitable assumptions (see also Remark \ref{rem:wellDef} later). The main result for the proximal map that we utilize beyond that is the following:

\begin{lemma}[{cf.\ \cite[Lemma 3.9]{Pischke2025}}]\label{Lemma11P}
Let $h: X \rightarrow \R$ be a strongly quasiconvex function with modulus $\gamma >0$ and let $\beta > 0$ and $x\in X$. If $\overline{x}\in \mathrm{Prox}_{\beta h}(x)$, then
\[
h(\overline{x}) \leq \max\{h(y),h(\overline{x})\} + \frac{\lambda}{2}\Big(\frac{\lambda}{\beta}-\gamma + \lambda \gamma \Big )d^2(y,\overline{x}) + \frac{\lambda}{\beta}\langle \vv{x\overline{x}}, \vv{\overline{x}y} \rangle
\] 
for all $y \in X$ and $\lambda \in [0,1]$.
\end{lemma}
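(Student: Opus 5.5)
We compare the minimality of $\overline{x}$ with a competitor on the geodesic from $\overline{x}$ to $y$. Then we apply strong quasiconvexity along that geodesic and the $\mathrm{CN}$-inequality of Lemma \ref{CN} to the distance term. Fix $y\in X$ and $\lambda\in[0,1]$, and set $w:=\exp_{\overline{x}}(\lambda\exp^{-1}_{\overline{x}}y)$, the point at parameter $\lambda$ on the normalized geodesic joining $\overline{x}$ to $y$.

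Since $\overline{x}\in\mathrm{Prox}_{\beta h}(x)$, we have
\[
h(\overline{x})+\frac{1}{2\beta}d^2(x,\overline{x})\leq h(w)+\frac{1}{2\beta}d^2(x,w).
\]
Strong quasiconvexity gives
\[
h(w)\leq\max\{h(\overline{x}),h(y)\}-\lambda(1-\lambda)\frac{\gamma}{2}d^2(\overline{x},y).
\]
Lemma \ref{CN}, applied with base point $\overline{x}$, endpoint $y$ and test point $x$, gives
\[
d^2(w,x)\leq(1-\lambda)d^2(\overline{x},x)+\lambda d^2(y,x)-\lambda(1-\lambda)d^2(\overline{x},y).
\]
Substituting both bounds into the minimality inequality, the term $\frac{1}{2\beta}d^2(x,\overline{x})$ partly cancels, and we obtain
\[
h(\overline{x})\leq\max\{h(y),h(\overline{x})\}-\lambda(1-\lambda)\frac{\gamma}{2}d^2(\overline{x},y)+\frac{\lambda}{2\beta}\Big(d^2(y,x)-d^2(\overline{x},x)-(1-\lambda)d^2(\overline{x},y)\Big).
\]

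It remains to rewrite $d^2(y,x)-d^2(\overline{x},x)$ using the quasi-inner product. Directly from its definition,
\[
\langle\vv{x\overline{x}},\vv{\overline{x}y}\rangle=\frac12\big(d^2(x,y)-d^2(\overline{x},\overline{x})-d^2(x,\overline{x})-d^2(\overline{x},y)\big)=\frac12\big(d^2(x,y)-d^2(x,\overline{x})-d^2(\overline{x},y)\big).
\]
Hence
\[
d^2(y,x)-d^2(\overline{x},x)=2\langle\vv{x\overline{x}},\vv{\overline{x}y}\rangle+d^2(\overline{x},y).
\]
Plugging this in, the bracketed term becomes $2\langle\vv{x\overline{x}},\vv{\overline{x}y}\rangle+\lambda d^2(\overline{x},y)$. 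The total coefficient of $d^2(y,\overline{x})$ is then
\[
\frac{\lambda^2}{2\beta}-\lambda(1-\lambda)\frac{\gamma}{2}=\frac{\lambda}{2}\Big(\frac{\lambda}{\beta}-\gamma+\lambda\gamma\Big),
\]
and the inner-product term comes with coefficient $\frac{\lambda}{\beta}$. This is exactly the claimed inequality.

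The only delicate point is orienting the $\mathrm{CN}$-inequality correctly, with $x$ as the test point and the geodesic starting at $\overline{x}$, so that the $d^2(x,\overline{x})$ terms cancel rather than accumulate. Everything else is bookkeeping. The argument uses only that $w$ is an admissible competitor, so it also holds when the argmin is taken over a convex set $K$ with $y\in K$.
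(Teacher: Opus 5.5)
Your proof is correct and is essentially the standard argument behind the cited result (the paper gives no proof of its own and only cites \cite[Lemma 3.9]{Pischke2025}): you test minimality against the geodesic point $w$, apply strong quasiconvexity and Lemma \ref{CN}, and rewrite via the quasi-inner product. The only slip is cosmetic. In expanding $\langle\vv{x\overline{x}},\vv{\overline{x}y}\rangle$, the term $d^2(\overline{x},\overline{x})$ enters with a plus sign rather than a minus sign, which changes nothing since it vanishes.
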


The above result, which extends \cite[Proposition 7]{Lara2022}, was derived in \cite{Pischke2025} already over Hadamard spaces, and hence holds over Hadamard manifolds as well.

\section{Relaxed-inertial proximal point algorithms on Hadamard manifolds}\label{sec:alg}

We now begin by defining the main method studied in this paper for solving the equilibrium problem \eqref{equil} over a Hadamard manifold $X$, a suitable subspace $K\subseteq X$ and a given bifunction $f: K\times K \rightarrow \R$. We use $S(K,f)$ to denote the associated solution set. As discussed in the introduction, we base our method on that introduced and studied in the work of Grad, Lara and Marcavillaca \cite{GradLaraMarca24} for the associated problem in Euclidean spaces, and so first recall that method and its associated parameters here in detail.

Over $X=\mathbb{R}^d$, let $K\subseteq X$ be an affine subspace. Fix $\alpha, \rho \in [0,1)$ and starting points $x_0, x_{-1} \in K$ as well as sequences of parameters $\{ \beta_{k}\}_k \subseteq (0,\infty)$, $\{ \alpha_{k}\}_k \subseteq [0,\alpha]$ and $\{\rho_k\}_k \subseteq [1-\rho,1+\rho]$. Iteratively, the method introduced in \cite{GradLaraMarca24} now calculates a new point $x_{k+1}\in K$ from previous points $x_{k},x_{k-1}\in K$ as follows:
\[
\begin{cases}
y_k := x_k + \alpha_k(x_k - x_{k-1}),\\
z_{k} \in \argmin_{x\in K}\left\{f(y_k,x)+ \frac{1}{2\beta_k}\norm{y_k-x}^2\right\},\\
x_{k+1} := (1-\rho_k)y_k + \rho_k z_k.
\end{cases}\tag{\%}\label{preRIPPA}
\]

The first step $y_k$, commonly called the exploration step, incorporates the so-called inertia term $\alpha_k(x_k - x_{k-1})$ already discussed in the introduction. The second step $z_k$, commonly called the proximal step, then produces a new point from this $y_k$ by utilizing a proximal regularization of the bifunction in its second argument, with the first argument fixed to $y_k$, akin to the previous proximal method studied by Iusem and Lara \cite{IusemLara2021} over Euclidean spaces (and the present authors over Hadamard spaces \cite{DespresPischke2026}). The third step $x_{k+1}$, commonly called the (over)relaxation step, combines the previous two.

In \cite{GradLaraMarca24}, Grad, Lara and Marcavillaca study the above algorithm for a specific class of non-convex bifunctions, characterized by the following conditions $(A1)$ -- $(A5)$:

\begin{enumerate}
\item [$(A1)$] For every $x \in K$, the function $f(x,\cdot)$ is lsc, and for every $y \in K$, the function $f(\cdot,y)$ is usc.
\item [$(A2)$] $f$ is pseudomonotone, i.e.
\[
f(x,y) \geq 0 \text{ implies } f(y,x) \leq 0
\]
for all $x,y \in K$.
\item [$(A3)$] $f$ is lsc (jointly in both arguments).
\item [$(A4)$] For every $x \in K$, the function $f(x,\cdot)$ is strongly quasiconvex with modulus $\gamma > 0$.
\item [$(A5)$] $f$ satisfies the following Lipschitz condition: there exists $\eta > 0$ such that
\[
f(x,z) - f(x,y) - f(y,z) \leq \eta \left(\norm{x-y}^2 + \norm{y-z}^2 \right)
\]
for all $x,y,z \in K$.
\end{enumerate}

Further, they rely on the following crucial assumptions on the associated parameter sequences: For $\{\beta_k\}_k$, it is assumed that there exists an $e>0$ such that 
\begin{enumerate}
\item[$(C1)$] $\frac{1}{\gamma-8\eta} < \beta_k < e \leq \upperbound$ for all $k \in \N$,
\end{enumerate}
and, relative to this $e$, they further require that 
\begin{enumerate}
\item[$(C2)$] $0 < 1-\rho \leq \rho_k \leq 1+\rho$ with $0 \leq \rho \leq 1- 4\eta e$.
\end{enumerate}

In both cases, $\gamma$ and $\eta$ are of course as in $(A4)$ and $(A5)$ respectively. Similar to \cite[Remark 8]{GradLaraMarca24}, we remark that if one considers $e = \upperbound$, $(C2)$ implies $\rho = 0$ and so $\rho_k = 1$ for all $k \in \N$. In this case, $(C1)$ becomes $\lowerbound < \beta_k < \upperbound$ for all $k \in \N$ as employed in the context of the proximal point method studied in \cite{IusemLara2021} (as well as \cite{DespresPischke2026}). In any case, $(C1)$ in this way in particular implies that $12 \eta < \gamma$, which featured as assumption $(A6)$ in \cite{IusemLara2021} (as well as \cite{DespresPischke2026}), and we will likewise often use this property in the present paper. Further as in \cite{IusemLara2021}, we note that the standard assumption
\begin{enumerate}
\item [$(A0)$] $f(x,x)=0$ for all $x\in X$
\end{enumerate}
can be derived from $(A2)$ and $(A5)$, but we will later also use it individually.

In the setting of $(A1)$ -- $(A5)$ and $(C1)$ -- $(C2)$, Grad, Lara and Marcavillaca \cite{GradLaraMarca24} show that the above method \eqref{preRIPPA} is well-defined and that it converges to (the unique) solution of the associated equilibrium problem under a suitable decaying assumption on the inertia terms:

\begin{theorem}[{cf.\ \cite[Theorem 11]{GradLaraMarca24}}]\label{GLMconv}
Let $X=\mathbb{R}^d$ and let $K\subseteq X$ be an affine subspace. Suppose that $f:K\times K\to\mathbb{R}$ satisfies $(Ai)$ for $i = 1,2,3,4,5$. Fix $\alpha, \rho \in [0,1)$ and $x_0, x_{-1} \in K$ as well as $\{ \beta_{k}\}_k \subseteq (0,\infty)$, $\{ \alpha_{k}\}_k \subseteq [0,\alpha]$ and $\{\rho_k\}_k \subseteq [1-\rho,1+\rho]$ such that $(C1)$ and $(C2)$ hold, and let $\{x_k\}_k, \{y_k\}_k$ and $\{z_k\}_k$ be the sequences generated by \eqref{preRIPPA}. If
\[
\sum_{k=0}^{\infty} \alpha_k \norm{x_k-x_{k-1}}^2 < +\infty,
\]
then the sequences $\{x_k\}_k, \{y_k\}_k$ and $\{z_k\}_k$ converge to the same limit $x^*$, where $\{x^{*}\} = S(K,f)$.
\end{theorem}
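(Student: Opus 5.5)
The plan is to fix a solution $x^*\in S(K,f)$, derive a one-step Fejér-type inequality for $\norm{z_k-x^*}$ in terms of $\norm{y_k-x^*}$, feed it through the relaxation and inertia steps, and then close with an Alvarez--Attouch style summability argument. First I will secure existence and uniqueness of $x^*$. For uniqueness, suppose $x^*,x'$ are two solutions. Then $f(x^*,x')\geq 0$, and pseudomonotonicity $(A2)$ gives $f(x',x^*)\leq 0$, hence $f(x',x^*)=0$. Applying $(A4)$ to $f(x',\cdot)$ at the midpoint $m$ of $x'$ and $x^*$ gives $f(x',m)<\max\{f(x',x'),f(x',x^*)\}=0$, using $(A0)$. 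This contradicts $x'$ being a solution. Existence I will take from the standard coercivity of strongly quasiconvex lsc functions together with $(A1)$--$(A3)$, or it can be extracted a posteriori from the convergence argument below.

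The core estimate applies Lemma \ref{Lemma11P} (in its Euclidean form) with $h=f(y_k,\cdot)$, $x=y_k$, $\overline{x}=z_k$ and $y=x^*$. I split into two cases according to which term realizes $\max\{f(y_k,x^*),f(y_k,z_k)\}$. If the maximum is $f(y_k,z_k)$, the $h(\overline{x})$ terms cancel. Otherwise I need a lower bound for $f(y_k,z_k)-f(y_k,x^*)$. For this I use $(A5)$ with the triple $(y_k,z_k,x^*)$, which gives
\[
f(y_k,z_k)\geq f(y_k,x^*)-f(z_k,x^*)-\eta\big(\norm{y_k-z_k}^2+\norm{z_k-x^*}^2\big).
\]
Since $f(x^*,z_k)\geq 0$, $(A2)$ gives $f(z_k,x^*)\leq 0$. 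In both cases I therefore obtain
\[
0\leq \eta\norm{y_k-z_k}^2+\eta\norm{z_k-x^*}^2+\tfrac{\lambda}{2}\big(\tfrac{\lambda}{\beta_k}-\gamma+\lambda\gamma\big)\norm{z_k-x^*}^2+\tfrac{\lambda}{\beta_k}\langle y_k-z_k,z_k-x^*\rangle .
\]
Expanding $2\langle y_k-z_k,z_k-x^*\rangle=\norm{y_k-x^*}^2-\norm{y_k-z_k}^2-\norm{z_k-x^*}^2$ and choosing $\lambda$ suitably (first try $\lambda=1$, else $\lambda=\tfrac12$), the bounds $(C1)$, i.e.\ $\beta_k>1/(\gamma-8\eta)$ and $\beta_k<e\leq 1/(4\eta)$, should yield
\[
\norm{z_k-x^*}^2\leq \norm{y_k-x^*}^2-c\,\norm{y_k-z_k}^2,
\]
with $c\geq 1-4\eta e>0$. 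The strict lower bound on $\beta_k$ should moreover leave a spare factor $(1+\delta)$ in front of $\norm{z_k-x^*}^2$.

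Next I handle the relaxation step. The Euclidean identity for $x_{k+1}=(1-\rho_k)y_k+\rho_k z_k$ (the analogue of Lemma \ref{CN}, and of Lemma \ref{revCN} when $\rho_k>1$) gives
\[
\norm{x_{k+1}-x^*}^2=(1-\rho_k)\norm{y_k-x^*}^2+\rho_k\norm{z_k-x^*}^2-\rho_k(1-\rho_k)\norm{y_k-z_k}^2 .
\]
Inserting the core estimate produces the coefficient $-\rho_k(c+1-\rho_k)$ on $\norm{y_k-z_k}^2$. By $(C2)$, namely $\rho_k\leq 1+\rho$ and $\rho\leq 1-4\eta e$, this coefficient is uniformly negative. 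For the inertia step, the identity
\[
\norm{y_k-x^*}^2=(1+\alpha_k)\norm{x_k-x^*}^2-\alpha_k\norm{x_{k-1}-x^*}^2+\alpha_k(1+\alpha_k)\norm{x_k-x_{k-1}}^2
\]
turns this into $\varphi_{k+1}-\varphi_k\leq \alpha_k(\varphi_k-\varphi_{k-1})+2\alpha_k\norm{x_k-x_{k-1}}^2-c'\norm{y_k-z_k}^2$, where $\varphi_k=\norm{x_k-x^*}^2$. The standard Alvarez--Attouch lemma, using $\alpha_k\leq\alpha<1$ and the summability hypothesis, then gives that $\varphi_k$ converges and that $\sum_k\norm{y_k-z_k}^2<\infty$. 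Since $\norm{x_k-x_{k-1}}\to 0$ (from $\alpha_k\norm{x_k-x_{k-1}}^2\to 0$, or directly from $x_{k+1}-y_k=\rho_k(z_k-y_k)$), the three sequences share their limiting behaviour.

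Finally I identify the limit. My preferred route is to exploit the spare factor $(1+\delta)$: it gives $\delta\norm{z_k-x^*}^2\lesssim \norm{y_k-x^*}^2-\norm{z_k-x^*}^2$ plus summable errors, which forces $\liminf_k\norm{z_k-x^*}=0$. Combined with the convergence of $\varphi_k$, this gives $x_k,y_k,z_k\to x^*$ with no compactness argument. As a fallback I would take a cluster point $\bar x$ of the bounded sequence $\{z_k\}_k$ and use $y_k-z_k\to 0$ together with the prox optimality $f(y_k,z_k)+\tfrac{1}{2\beta_k}\norm{y_k-z_k}^2\leq f(y_k,w)+\tfrac{1}{2\beta_k}\norm{y_k-w}^2$. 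Passing to the limit via $(A1)$ and $(A3)$ and using Lemma \ref{Lemma11P} with small $\lambda$ to remove the quadratic term, this shows $\bar x\in S(K,f)$, and uniqueness then gives $\bar x=x^*$. The main obstacle I expect is the calibration in the second paragraph: choosing $\lambda$ and tracking constants so that $(C1)$ and $(C2)$ exactly produce a positive $c$, a negative relaxation coefficient, and ideally the spare $\delta$. This is where the hypotheses $8\eta$, $4\eta e$ and $\rho\leq 1-4\eta e$ must fit together.
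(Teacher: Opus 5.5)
Your skeleton (Lemma \ref{Lemma11P} at $y=x^*$ with the case split via $(A5)$ and $(A2)$, then the relaxation and inertia identities, then the Alvarez--Attouch lemma) is the one the paper uses for Theorems \ref{THEOREM3} and \ref{THEOREM3a}. However, the step you call ``calibration'' is a genuine gap, not bookkeeping.

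\textbf{Why the fixed choices fail.} The choice $\lambda=1$ gives nothing, because it leaves $-2\eta\beta_k\norm{z_k-x^*}^2$ on the left. So you are at $\lambda=\frac12$, where your estimate reads
$(1+\delta_k)\norm{z_k-x^*}^2\le\norm{y_k-x^*}^2-(1-4\eta\beta_k)\norm{y_k-z_k}^2$ with $\delta_k=\frac12(\beta_k(\gamma-8\eta)-1)$. Under $(C1)$ and $(C2)$ both constants are only pointwise positive:
\begin{itemize}
\item $(C1)$ allows $e=\upperbound$, so ``$c\ge 1-4\eta e>0$'' fails.
\item $(C2)$ allows $\rho=1-4\eta e$. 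With $\rho_k\equiv 2-4\eta e$ and $\beta_k\uparrow e$, the weight $2-\rho_k-4\eta\beta_k$ tends to $0$. The paper points out exactly this degeneracy, which is why its own theorems replace $(C1)$ by $(C1)^l$ or $(C1)^u$ (see Lemmas \ref{inetial_greater_half} and \ref{inertial_greater_lambda_k}), and why this statement is only cited from \cite{GradLaraMarca24}.
\item Likewise $\delta_k\to0$, even summably, if $\beta_k\downarrow\lowerbound$.
\end{itemize}
Consequently the Alvarez--Attouch lemma only gives $\sum_k\rho_k(2-\rho_k-4\eta\beta_k)\norm{y_k-z_k}^2<\infty$, not $y_k-z_k\to0$. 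The spare factor then yields no $\liminf$ conclusion, and your compactness fallback also needs $y_k-z_k\to0$. No fixed $\lambda$ rescues this. Keeping $2-\rho_k-2\eta\beta_k/\lambda$ uniformly positive as $\beta_k\uparrow e$ with $\rho_k\equiv2-4\eta e$ forces $\lambda>\frac12$. But discarding the $\norm{z_k-x^*}^2$ term requires $\lambda\le N(\beta_k)$, with $N$ as in \eqref{N}, and $N(\beta_k)\to\frac12$ when $\beta_k\downarrow\lowerbound$. A single sequence $\{\beta_k\}_k$ can approach both ends.

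\textbf{The missing idea: let $\lambda$ depend on $\beta_k$.}
\begin{enumerate}
\item Take $\lambda_k=N(\beta_k)$. As in Lemmas \ref{collection} and \ref{inertail_easy_ineqs_lamda_k}, whose proofs only use $\beta_k\in[\lowerbound,\upperbound]$ and $12\eta<\gamma$, this makes the $\norm{z_k-x^*}^2$ coefficient vanish.
\item The map $x\mapsto x/N(x)$ is increasing on $[\lowerbound,\upperbound]$ by Lemma \ref{Nporps}(iv). Also $N$ is strictly increasing with $N(\lowerbound)=\frac12<N(e)$, since $e>\lowerbound$ under $(C1)$. Together these give the uniform bound $2-\rho_k-\frac{2\eta\beta_k}{N(\beta_k)}\ge 4\eta e-\frac{2\eta e}{N(e)}>0$. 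Hence $\sum_k\norm{y_k-z_k}^2<\infty$.
\item Drop the spare factor and identify the limit as in the proof of Theorem \ref{THEOREM3}. Lemma \ref{Lemma11P} at $y=x^*$, with $\lambda=\frac18$ in Case 1 or $\lambda=\frac38$ in Case 2, together with \eqref{CS}, gives $\norm{z_k-x^*}\le C\norm{z_k-y_k}$. Here $C$ depends only on $\gamma,\eta$, and the argument uses only $\beta_k\ge\lowerbound$. So $z_k$, $y_k$ and $x_k$ all converge to $x^*$ without compactness.
\end{enumerate}

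Finally, existence cannot be ``extracted a posteriori'' from your argument, since every estimate is anchored at a fixed $x^*\in S(K,f)$. It has to be cited, e.g.\ \cite[Theorem 3.9]{DespresPischke2026}.
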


\begin{remark}
In \cite{GradLaraMarca24}, assumption $(A1)$ only refers to the requirement that $f(\cdot,y)$ is usc for all $y \in K$. The assumption that $f(x,\cdot)$ is lsc for every $x \in K$ is denoted by $(A1')$ in \cite{GradLaraMarca24} and is treated separately as it naturally follows from $(A3)$. However, as we will in particular show in this paper that the assumption $(A3)$ can in certain contexts be eliminated from the above convergence theorem and its generalization to Hadamard manifolds (see in particular Remark \ref{rem:generalization} later on), we prefer the above naming convention, which follows that of Iusem and Lara \cite{IusemLara2021} (as well as \cite{DespresPischke2026}). Further, we want to note that a slight weakening of the assumptions on the parameters is discussed in \cite[Remark 87]{GLM2025} and while we deem it quite likely that our present results can be extended to this weakened assumption, we here focus on the original assumptions from \cite{GradLaraMarca24} for simplicity.
\end{remark}

Lifted to a general Hadamard manifold $X$, the above method \eqref{preRIPPA} now takes the following form: First, to set up the problem, we fix a set $K\subseteq X$ which is metrically closed and such that the complete geodesic line determined by any two points of $K$ lies in $K$, that is
\[
\exp_x(t\exp^{-1}_xy)\in K\text{ for all }t\in\mathbb{R}\text{ and }x,y\in K.
\]
In particular, $K$ is a convex set and so $K$ is a Hadamard space (see e.g.\ \cite{Bacak2014}). Over linear spaces $X$, sets $K$ as above are exactly the affine subspaces and we will find that the above closure conditions are the right conditions to generalize the method from \cite{GradLaraMarca24}.

Over such a $K$, we then consider a bifunction $f:K\times K\to\mathbb{R}$ satisfying assumptions among $(A1)$ -- $(A5)$, with $(A5)$ now formulated using the metric via
\[
f(x,z) - f(x,y) - f(y,z) \leq \eta \left(d^2(x,y) + d^2(y,z) \right)
\]
for all $x,y,z \in K$, using a fixed $\eta > 0$. We still denote the set of equilibrium points of $f$ over $K$, that is the set of all $x^{*} \in K$ such that $f(x^{*},y) \geq 0$ for all $y \in K$, by $S(K,f)$.  As $K$ is a Hadamard space, \cite[Theorem 3.9]{DespresPischke2026} guarantees that $S(K,f)\neq\emptyset$ whenever $f$ satisfies $(A0)$, $(A1)$, $(A2)$ and $(A4)$.

We now move on to the method. Fixing parameters $\alpha, \rho \in [0,1)$ and $x_0, x_{-1} \in K$ as well as $\{ \beta_{k}\}_k \subseteq (0,\infty)$, $\{ \alpha_{k}\}_k \subseteq [0,\alpha]$ and $\{\rho_k\}_k \subseteq [1-\rho,1+\rho]$ as above, the method iteratively calculates a new point $x_{k+1}\in K$ from the previous points $x_{k},x_{k-1}\in K$ as follows:
\[
\begin{cases}
y_k := \exp_{x_k}(-\alpha_k\exp^{-1}_{x_k}x_{k-1}),\\
z_{k} \in \argmin_{x\in K}\left\{f(y_k,x)+ \frac{1}{2\beta_k}d^2(y_k,x)\right\},\\
x_{k+1} := \exp_{z_k}((1-\rho_k)\exp^{-1}_{z_k}y_k).\tag{RIPPA}\label{RIPPA}
\end{cases}
\]

Again, as outlined in the introduction, the first step $y_k$ represents the exploration step involving the inertia term in a manifold. The second step represents the proximal step as usual and the last represents an (over)relaxation in manifolds, combining the previous two. Using the fact that $\exp_{x}(\xi)=x+\xi$ and $\exp^{-1}_x(y)=y-x$ over Euclidean spaces, it can be easily verified that the method \eqref{RIPPA} coincides with \eqref{preRIPPA} in such a setting.

The first observation we want to make is that, like \eqref{preRIPPA} in \cite{GradLaraMarca24}, also the above \eqref{RIPPA} is well-defined in the context of the parameter restrictions given by $(C1)$ and $(C2)$. This crucially relies on the fact that
\[
\mathrm{Prox}_{\beta_k f(y_k,\cdot)}:=\argmin_{x\in K}\left\{f(y_k,x)+ \frac{1}{2\beta_k}d^2(y_k,x)\right\}\neq\emptyset,
\]
which follows rather immediately from corresponding results obtained by the authors in \cite{DespresPischke2026} over Hadamard spaces. We collect this in the following remark.

\begin{remark}\label{rem:wellDef}
By \cite[Proposition 4.4]{DespresPischke2026}, given a bifunction $f:Y\times Y\to\mathbb{R}$ on a Hadamard space $Y$ which is quasiconvex and lsc in its right argument and satisfies $(A5)$, the proximal map $\mathrm{Prox}_{\beta f(z,\cdot)}$ is non-empty for any $z\in X$ and $\beta\leq\frac{1}{4\eta}$ with $\eta$ as in $(A5)$. As $K$ is in particular a Hadamard space, this result also applies here, so that \eqref{RIPPA} is well-defined whenever $(A1)$, $(A4)$, $(A5)$ and $(C1)$ hold: For $x_k,x_{k-1}\in K$, we have $y_k\in K$ since $K$ is closed under complete geodesic lines. The above result yields that $z_k\in K$ is well-defined. We get $x_{k+1}\in K$, again using that $K$ is closed under complete geodesic lines.
\end{remark}

\begin{remark}
As utilized in \cite{GradLaraMarca24}, the convenient stopping criterion $y_{k} = z_k$ applies in the context \eqref{preRIPPA} as whenever this equality holds, one already knows that $\{y_k\} = S(K,f)$. While \ref{RIPPA} and \eqref{preRIPPA} above were formulated without such a condition (contrary to \cite{GradLaraMarca24}) to focus purely on the (infinite) sequences generated by these methods, we want to highlight that a similar stopping criterion also applies in the nonlinear context of \eqref{RIPPA}. Indeed, under $(A0)$ and $(A4)$ it follows from \cite[Proposition 4.6]{DespresPischke2026} that 
\[
S(K,f) = \mathrm{Fix}\left(\argmin_{y \in K}\left\{f(\cdot, y) + \frac{1}{2\beta} d^2(\cdot,y)\right\}\right)
\]
for any $\beta > 0$, again using that $K$ is a Hadamard space.
\end{remark}

\section{Quantitative convergence theorems}\label{sec:convergence}

\subsection{Key assumptions}

We now move to the main results of the present paper, that is quantitative convergence theorems for the method \eqref{RIPPA} introduced above over general Hadamard manifolds and for suitable bifunctions. In order to derive our effective rates of convergence, we will need to consider one of two distinct \emph{quantitative} variants of the condition $(C1)$. 

Concretely, one of the two conditions we consider on the one hand strengthens the property $\beta_k < e$ to $\beta_k \leq e-\Theta$ for some $\Theta>0$ with $e\leq\upperbound$. However, the condition we consider simultaneously weakens the lower bound from $\frac{1}{\gamma-8\eta} < \beta_k$ to $\frac{1}{\gamma-8\eta} \leq \beta_k$. Together, we arrive at the following:
\begin{enumerate}
\item [${(C1)}^l$] $\frac{1}{\gamma-8\eta} \leq \beta_k \leq e-\Theta$ for all $k \in \N$, where $e\leq \upperbound$ and $\Theta>0$ is some (fixed) constant.
\end{enumerate}

The dual to this arises by instead making the lower bound $\frac{1}{\gamma-8\eta} < \beta_k$ uniformly strict by the introduction of a corresponding additional constant $\theta$, while the upper bound $\beta_k < e \leq \upperbound$ is relaxed to a non-strict variant, that is:

\begin{enumerate}
\item [${(C1)}^u$] $\lowerbound + \theta \leq \beta_k \leq e$ for all $k \in \N$, where $e\leq \upperbound$ and $\theta>0$ is some (fixed) constant.
\end{enumerate}

Recall that, as discussed before in the context of assumption $(C1)$, naturally also these extended assumptions ${(C1)}^l$ or ${(C1)}^u$ imply $12 \eta < \gamma$, which we utilize repeatedly throughout. Also recall that, as mentioned in Remark \ref{rem:wellDef}, the iteration \eqref{RIPPA} is well-defined in the context of $(A1)$, $(A4)$, $(A5)$ and $(C1)$ but by the argument given therein, the same holds for $(C1)$ replaced by ${(C1)}^l$ or ${(C1)}^u$.

Our quantitative results given later depend on assuming either ${(C1)}^l$ or ${(C1)}^u$. We are not able to give similar quantitative results for the original condition $(C1)$, nor are we able to relax any of the two assumptions ${(C1)}^l$ or ${(C1)}^u$ any further, e.g.\ to 
\[
\frac{1}{\gamma-8\eta} \leq \beta_k \leq e \leq \upperbound \text{ for all }k\in\mathbb{N}.
\]
This should in particular be compared to \cite{DespresPischke2026}, where in the context of similar quantitative results and extensions to nonlinear spaces for the proximal point type method \emph{without} inertia terms or over-relaxations featuring in \cite{IusemLara2021} (which arises from \eqref{RIPPA} by setting $\alpha,\rho=0$), it was possible to weaken the condition $\{\beta_k\}_k\subseteq (\lowerbound,\upperbound)$ to $\{\beta_k\}_k\subseteq [\lowerbound,\upperbound]$. In fact, the approach taken later for deriving our (quantitative) convergence result under the condition ${(C1)}^u$ relies on the machinery developed in \cite{DespresPischke2026} (and further extensions thereof given in this paper, see in particular Lemma \ref{Nporps} later on) for treating this generalization.

Practically, this however seems to be of rather little importance as $\{\beta_k\}_k$ is an auxiliary sequence which can be chosen freely, so that the above ${(C1)}^l$ or ${(C1)}^u$ are in a way as easy to fulfill as $(C1)$ itself.

Similar to \cite{DespresPischke2026}, in the course of our convergence analysis, we will assume the well-definedness of \eqref{RIPPA} outright, and this, paired with the assumption of the existence of a solution to \eqref{equil}, which is guaranteed under $(A0)$, $(A1)$, $(A2)$ and $(A4)$, will allow us to discharge any continuity assumptions like $(A1)$ or $(A3)$ (see Remark \ref{rem:generalization} later on). In particular, under the assumption of either ${(C1)}^l$ or ${(C1)}^u$, $(A3)$ can be completely removed. In that way, unless stated otherwise, we from now on fix $\alpha, \rho \in [0,1)$ and $x_0, x_{-1} \in K$ as well as $\{ \beta_{k}\}_k \subseteq (0,\infty)$, $\{ \alpha_{k}\}_k \subseteq [0,\alpha]$ and $\{\rho_k\}_k \subseteq [1-\rho,1+\rho]$. Further, we suppose that \eqref{RIPPA} is well-defined and let $\{x_k\}_k, \{y_k\}_k$ and $\{z_k\}_k$ be its generated sequences. Further, we assume that $S(K,f) \neq \emptyset$ and denote its unique element by $x^{*}$.

\subsection{Basic lemmas}

In similarity to \cite{DespresPischke2026} (and by relation also \cite{GradLaraMarca24,IusemLara2021}, although this is left rather implicit therein), a key step for our convergence analysis involves establishing the \emph{Fej\'er monotonicity} of the method. However, contrary to \cite{DespresPischke2026,IusemLara2021}, we here rely on establishing the \emph{quasi-Fej\'er monotonicity} of the method \eqref{RIPPA} (already featuring implicitly in \cite{GradLaraMarca24} for the method \eqref{preRIPPA}), that is we show that 
\begin{equation*}
\forall k,l\in \N \left( d^2(x_{k+l},x^{*}) \leq a \cdot d^2(x_k, x^{*}) + \sum_{i = k }^{\infty} e_i \right)
\end{equation*}
where $a >0$ and $\{e_i\}_i$ is a sequence in $(0, \infty)$ with $\sum_{i = 0}^{\infty} e_i < +\infty$.

Towards this property, we rely on the abstract Lemma \ref{madness0}, which is a slightly corrected version (see Remark \ref{GradLaraMarcaWrong} below) of the technical \cite[Lemma 2, (a)]{GradLaraMarca24}, which itself was essentially established already in \cite[Theorem 2.1]{AlvarezAttouch01}. As the proof just amounts to some simple (but slightly tedious) calculations and inductions, we omit it.

\begin{lemma}[{essentially \cite[Lemma 2, (a)]{GradLaraMarca24}}]\label{madness0}
Let $\{\varphi_k\}_k, \{s_k\}_k,\{\delta_k\}_k\subseteq [0,\infty)$, let $\varphi_{-1} \in [0,\infty)$ and let $\alpha \in [0,1)$ with $\{ \alpha_{k}\}_k \subseteq [0,\alpha]$. Assume that
\begin{equation}
\varphi_{k+1} - \varphi_k + s_{k+1} \leq \alpha_k(\varphi_k - \varphi_{k-1}) + \delta_k \text{ for all } k \in \N.\tag{$*$}\label{phieq0}
\end{equation}
Then
\[
\forall k \in \N\ \forall n \geq k  \left(\varphi_n \leq \frac{1}{1-\alpha} \varphi_k + \frac{1}{1-\alpha}\sum_{i = k}^{n-1} \delta_i - \sum_{i = k+1}^{n} s_i\right).
\]
\end{lemma}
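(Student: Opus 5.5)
The plan is the classical Alvarez--Attouch argument: control the positive parts of the increments $\theta_i:=\varphi_i-\varphi_{i-1}$ for $i\geq 0$, then telescope \eqref{phieq0}.

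\textbf{Step 1 (one-step recursion for the positive increments).} Since $s_{i+1}\geq 0$, $\alpha_i\in[0,\alpha]$ and $\alpha_i\theta_i\leq \alpha_i[\theta_i]_+\leq\alpha[\theta_i]_+$, the inequality \eqref{phieq0} at index $i$ gives
\[
\theta_{i+1}\leq \alpha_i\theta_i+\delta_i-s_{i+1}\leq \alpha[\theta_i]_++\delta_i .
\]
The right-hand side is nonnegative, so the same bound holds for the positive part:
\[
[\theta_{i+1}]_+\leq \alpha[\theta_i]_++\delta_i \quad \text{for all } i\in\N.
\]

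\textbf{Step 2 (summing the positive increments).} By induction on $i\geq k$ we get
\[
[\theta_i]_+\leq \alpha^{i-k}[\theta_k]_++\sum_{j=k}^{i-1}\alpha^{i-1-j}\delta_j .
\]
Summing this over $i=k,\dots,n-1$, exchanging the order of summation and bounding the geometric sums by $\frac{1}{1-\alpha}$, we obtain
\[
\sum_{i=k}^{n-1}[\theta_i]_+\leq \frac{1}{1-\alpha}[\theta_k]_++\frac{1}{1-\alpha}\sum_{j=k}^{n-2}\delta_j .
\]
The key observation is that $\varphi_{k-1}\geq 0$ (this is where the hypothesis $\varphi_{-1}\geq 0$ enters for $k=0$). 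Hence $[\theta_k]_+\leq \varphi_k$, which is why only $\varphi_k$, and not $\varphi_{k-1}$, appears in the final bound.

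\textbf{Step 3 (telescoping).} Summing \eqref{phieq0} over $i=k,\dots,n-1$ gives
\[
\varphi_n-\varphi_k+\sum_{i=k+1}^{n}s_i\leq \alpha\sum_{i=k}^{n-1}[\theta_i]_++\sum_{i=k}^{n-1}\delta_i .
\]
Inserting the bound from Step 2 yields
\[
\varphi_n\leq \Big(1+\frac{\alpha}{1-\alpha}\Big)\varphi_k+\Big(1+\frac{\alpha}{1-\alpha}\Big)\sum_{i=k}^{n-1}\delta_i-\sum_{i=k+1}^{n}s_i ,
\]
which is exactly the claim, since $1+\frac{\alpha}{1-\alpha}=\frac{1}{1-\alpha}$. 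The case $n=k$ is trivial.

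The only delicate points are in Step 2: getting the index ranges right when exchanging the order of summation, and remembering to use $\varphi_{k-1}\geq 0$ to absorb $[\theta_k]_+$ into $\varphi_k$. Both are routine.
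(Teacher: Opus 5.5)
Your proof is correct and matches what the paper has in mind. The paper omits the proof as routine and defers to Alvarez--Attouch \cite{AlvarezAttouch01} and \cite[Lemma 2]{GradLaraMarca24}; their positive-part recursion $[\theta_{i+1}]_+\leq\alpha[\theta_i]_++\delta_i$, followed by summation and telescoping, is what you carry out, started at index $k$ instead of $0$, and your use of $\varphi_{k-1}\geq 0$ to get $[\theta_k]_+\leq\varphi_k$ is exactly where the factor $\frac{1}{1-\alpha}$ in front of $\varphi_k$ (cf.\ Remark \ref{GradLaraMarcaWrong}) comes from.
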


\begin{remark}\label{GradLaraMarcaWrong}
Note that the statement of \cite[Lemma 2, (a)]{GradLaraMarca24} is slightly wrong: Under the assumptions of Lemma \ref{madness0}, the authors of \cite{GradLaraMarca24} claim that for all $k \geq 1$:
\begin{equation*}\label{wrongness}
\varphi_k + \sum_{i = 1}^{k}s_i \leq \varphi_0 + \frac{1}{1-\alpha}\sum_{i=0}^{k-1}\delta_i,
\end{equation*}
omitting the factor $\frac{1}{1-\alpha}$ in front of $\varphi_0$ as in the above Lemma \ref{madness0}. However, this factor can in general not be avoided. Indeed, consider e.g.\ $\varphi_k := 1$, $\alpha_k := 1/2$ for all $k\in\mathbb{N}$ and $s_k :=\delta_k := 1/3$ if $k = 0$ and $s_k :=\delta_k := 1/k^2$ otherwise. Then $\varphi_{k+1} - \varphi_k + s_k \leq \alpha_k (\varphi_k - \varphi_{k-1}) + \delta_k$ holds trivially for all $k \in \N$, but $\varphi_1 + s_1 \nleq \varphi_0 + \frac{1}{1-\alpha}\delta_0$. 
\end{remark}

Rearranging the inequality obtained in Lemma \ref{madness0}, we in particular get that $\sum_{i = 1}^{\infty}s_i<+\infty$, so that $\lim_{i\to\infty}s_i= 0$. The following lemma provides a quantitative variant of the corresponding (weaker) conclusion that $\liminf_{i\to\infty}s_i= 0$, which we require for the derivation of our rates later.

\begin{lemma}\label{madness}
Let $\{\varphi_k\}_k, \{s_k\}_k,\{\delta_k\}_k\subseteq [0,\infty)$, let $\varphi_{-1} \in [0,\infty)$ and let $\alpha \in [0,1)$ with $\{ \alpha_{k}\}_k \subseteq [0,\alpha]$. Assume that \eqref{phieq0} as in Lemma \ref{madness0} holds, and take $b \in [0,\infty)$ such that $b \geq \varphi_0$. Assume further that there exists $S \in (0,\infty)$ such that $\sum_{k = 0}^{\infty}\delta_k \leq S$. Then:
\[
\forall \varepsilon > 0\ \forall n\in \N\ \exists k \in \N\left(n \leq k \leq \left\lceil\frac{b + S}{(1-\alpha)\varepsilon} \right\rceil + n +1 \text{ and } s_{k+1} < \varepsilon\right).
\]
\end{lemma}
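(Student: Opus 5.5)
We argue by contradiction, using Lemma \ref{madness0} to bound partial sums of the $s_i$. Fix $\varepsilon>0$ and $n\in\N$, and put $N:=\left\lceil\frac{b+S}{(1-\alpha)\varepsilon}\right\rceil$. Suppose that $s_{k+1}\geq\varepsilon$ for every $k$ with $n\leq k\leq N+n+1$. Then $s_i\geq \varepsilon$ for all $i$ with $n+1\leq i\leq N+n+2$, which gives $N+2$ indices, so
\[
\sum_{i=n+1}^{N+n+2} s_i \geq (N+2)\varepsilon .
\]

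For the upper bound we cannot apply Lemma \ref{madness0} starting at $k=n$, since that would involve $\varphi_n$, which is not controlled by $b$. Instead we apply it with $k=0$ and a suitable endpoint $m\geq N+n+2$ (say $m=N+n+2$). Since $\varphi_m\geq 0$, rearranging gives
\[
\sum_{i=1}^{m} s_i \leq \frac{1}{1-\alpha}\varphi_0+\frac{1}{1-\alpha}\sum_{i=0}^{m-1}\delta_i \leq \frac{b+S}{1-\alpha}.
\]
All $s_i$ are nonnegative, so the sum over $i=n+1,\dots,N+n+2$ is bounded by the full sum from $1$ to $m$. Hence
\[
(N+2)\varepsilon\leq \frac{b+S}{1-\alpha}.
\]
On the other hand, $N\geq \frac{b+S}{(1-\alpha)\varepsilon}$ gives $(N+2)\varepsilon>N\varepsilon\geq\frac{b+S}{1-\alpha}$. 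This is a contradiction. So some $k\in[n,N+n+1]$ satisfies $s_{k+1}<\varepsilon$, which is the claim.

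The main point to get right is the choice of starting index: the bound has to be obtained from Lemma \ref{madness0} with $k=0$ rather than $k=n$, so that only $\varphi_0\leq b$ and $\sum\delta_i\leq S$ enter, and the window of $s_i$ around $n$ is then handled purely by nonnegativity. The remaining work is index bookkeeping. The window actually contains $N+2$ terms, which is more than needed, so the stated bound holds with slack.
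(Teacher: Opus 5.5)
Your proof is correct and follows the paper's argument: you apply Lemma \ref{madness0} from index $0$ to bound the partial sums of the $s_i$ by $\frac{b+S}{1-\alpha}$, then reach a contradiction from the window of consecutive terms $s_{n+1},\dots,s_{N+n+2}$, each at least $\varepsilon$. The only cosmetic difference is that you stop at the finite endpoint $m=N+n+2$, whereas the paper passes to the full series $\sum_{i=1}^\infty s_i$.
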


\begin{proof}
By Lemma \ref{madness0}, we get 
\begin{equation*}
\forall n \in\mathbb{N} \left(\sum_{i = 1}^{n}s_i \leq \frac{1}{1-\alpha}\varphi_0 + \frac{1}{1-\alpha}\sum_{i = 0}^{n-1}\delta_i\right)
\end{equation*}
so that taking $n \rightarrow \infty$ yields
\[
\sum_{i = 1}^{\infty}s_i \leq \frac{1}{1-\alpha}\varphi_0 + \frac{1}{1-\alpha}\sum_{i = 0}^{\infty}\delta_i\leq \frac{b+S}{1-\alpha}.
\]
Suppose now for a contradiction that the claim fails, i.e.\ there are $\varepsilon>0$ and $n\in\mathbb{N}$ such that $s_{k+1} \geq \varepsilon$ for all $k\in\mathbb{N}$ with $n \leq k \leq \left\lceil\frac{b + S}{(1-\alpha)\varepsilon}\right \rceil + n +1=:l$. Then however
\[
\sum_{i = 1}^{\infty}s_i \geq \sum_{i = n}^{l} s_{i+1} \geq \left(\left\lceil\frac{b + S}{(1-\alpha)\varepsilon }\right\rceil +1\right) \varepsilon > \frac{b+S}{1-\alpha},
\]
which is a contradiction to the above.
\end{proof}

Before we apply Lemma \ref{madness} to the sequences generated by \eqref{RIPPA}, we of course need to show that its conditions, in particular \eqref{phieq0} as in Lemma \ref{madness0}, will hold in our case. The next results are towards this.

The following proposition then extends \cite[Proposition 6]{GradLaraMarca24}. In difference to that result, we emphasize the slightly more general \ref{inertial_ineq_nonbasic} and \ref{inertial_ineq__long_nonbasic} below as we will later require varying $\lambda$'s akin to the approach taken in \cite{DespresPischke2026}, whereas only their instantiations with $\lambda = 1/2$ are considered explicitly in \cite[Proposition 6]{GradLaraMarca24}.

\begin{lemma}[{extending \cite[Proposition 6]{GradLaraMarca24}}]\label{GProposition6}
Let $f: K \times K \rightarrow \R$ be such that $(A2)$, $(A4)$ and $(A5)$ hold. Suppose that $x^{*} \in S(K,f)$. Then, for every $k \in \N$, at least one of the following is true:
\begin{align*}
d^2(x_{k+1},x^{*}) &\leq d^2(y_k,x^{*}) - \left(\frac{2 -\rho_k}{\rho_k}\right)d^2(x_{k+1},y_k) \tag*{$(+)_1$}\label{inertial_ineq_nonbasic}\\
&\hphantom{\leq\mbox{}}- \rho_k\left((1-\lambda)\gamma\beta_k - \lambda\right)d^2(z_k,x^{*})
\end{align*}
for all $\lambda \in [0,1]$ or
\begin{align*}
d^2(x_{k+1},x^{*}) &\leq d^2(y_k,x^{*}) - \left(\frac{2 -\rho_k-\frac{2 \beta_k\eta}{\lambda}}{\rho_k}\right)d^2(x_{k+1},y_k) \tag*{$(+)_2$}\label{inertial_ineq__long_nonbasic}\\
&\hphantom{\leq\mbox{}}-  \rho_k\left((1-\lambda)\gamma\beta_k - \lambda - \frac{2\eta\beta_k}{\lambda}\right) d^2(z_k, x^{*})
\end{align*}
for all $\lambda \in (0,1]$.
\end{lemma}

\begin{proof}
By definition, we have $x_{k+1} := \exp_{z_k}((1-\rho_k)\exp^{-1}_{z_k}y_k)$. Then it holds that
\[
d^2(x_{k+1},x^{*}) \leq  \rho_k d^2(z_k,x^*)+(1-\rho_k) d^2(y^k,x^*)-\rho_k(1-\rho_k)d^2(z_k,y_k).\tag*{$(\circ)_1$}\label{3a}
\]
Indeed, either $\rho_k\in [0,1]$, where then by Lemma \ref{CN} we get \ref{3a} immediately. Or $\rho_k\in [1,2]$, then $\rho_k=1+\rho_k'$ for some $\rho_k\in [0,1]$ so that $x_{k+1} = \exp_{z_k}-\rho_k'\exp^{-1}_{z_k}y_k$. Hence, in that case, Lemma \ref{revCN} yields
\begin{align*}
d^2(x_{k+1},x^{*}) &\leq (1+\rho_k') d^2(z_k,x^{*})-\rho_k' d^2(y_k,x^{*})+\rho_k'(1+\rho_k')d^2(z_k,y_k)\\
&= \rho_kd^2(z_k,x^{*})+(1-\rho_k) d^2(y_k,x^{*})-\rho_k(1-\rho_k)d^2(z_k,y_k),
\end{align*}
that is we also get \ref{3a} in this case. Rewriting \ref{3a}, we get 
\[
d^2(x_{k+1},x^{*}) \leq d^2(y^k,x^*) + \rho_k \left(d^2(z_k,x^*)- d^2(y^k,x^*)\right)-\rho_k(1-\rho_k)d^2(z_k,y_k).
\]
Further, by definition of the quasi-inner product, we get 
\[
2\langle\vv{y_kz_k},\vv{x^*y_k}\rangle+d^2(z_k,y_k)= d^2(z_k,x^*)-d^2(y_k,x^*)
\]
so that, combining both, we obtain
\[
d^2(x_{k+1},x^{*}) \leq d^2(y^k,x^*) +2\rho_k\langle\vv{y_kz_k},\vv{x^*y_k}\rangle+\rho^2_k d^2(z_k,y_k).\tag*{$(\circ)_2$}\label{3b}
\]
Lemma \ref{Lemma11P}, applied to $f(y_k,\cdot)$ with $y = x^{*}$, yields
\begin{equation}
f(y_k,z_k) - \mathrm{max}\{f(y_k,z_k),f(y_k,x^{*})\} \leq \frac{\lambda}{2}\left(\frac{\lambda}{\beta_k}-\gamma + \lambda\gamma\right)d^2(z_k, x^{*}) + \frac{\lambda}{\beta_k}\langle \vv{y_kz_k}, \vv{z_kx^*} \rangle\tag*{$(\circ)_3$}\label{3aa}
\end{equation}
for all $\lambda \in [0,1]$. We distinguish two cases.

\textbf{Case 1:} $f(y_k,z_k) \geq f(y_k,x^{*})$. From \ref{3aa}, we obtain
\begin{equation*}
\frac{\lambda}{\beta_k}\langle \vv{y_kz_k}, \vv{x^*z_k} \rangle\leq \frac{\lambda}{2}\left(\frac{\lambda}{\beta_k}-\gamma + \lambda\gamma \right)d^2(z_k,x^{*})
\end{equation*}
for any $\lambda\in [0,1]$. Then, proceed as in the next case from \ref{3aaa}, but where $\eta = 0$, to obtain \ref{inertial_ineq_nonbasic}. 

\textbf{Case 2:} $f(y_k,z_k) < f(y_k,x^{*})$. Using $(A5)$, we obtain from \ref{3aa} that 
\begin{align*}
0 &\leq \frac{\lambda}{2}\left(\frac{\lambda}{\beta_k}-\gamma + \lambda\gamma\right)d^2(z_k, x^{*}) + \frac{\lambda}{\beta_k}\langle \vv{y_kz_k}, \vv{z_kx^*} \rangle+f(y_k,x^{*})-f(y_k,z_k)\\
&\leq \frac{\lambda}{2}\left(\frac{\lambda}{\beta_k}-\gamma + \lambda\gamma\right)d^2(z_k, x^{*}) + \frac{\lambda}{\beta_k}\langle \vv{y_kz_k}, \vv{z_kx^*} \rangle+f(z_k,x^*)+\eta\left(d^2(z_k,y_k)+d^2(z_k,x^*)\right).
\end{align*}
Using the pseudomonotonicity of $f$ and since $x^*\in S(K,f)$, we get $f(z_k,x^*)\leq 0$ and so obtain
\[
\frac{\lambda}{\beta_k}\langle \vv{y_kz_k}, \vv{x^*z_k} \rangle \leq \frac{\lambda}{2}\left(\frac{\lambda}{\beta_k}-\gamma + \lambda\gamma+\frac{2\eta}{\lambda}\right)d^2(z_k, x^{*}) +\eta d^2(z_k,y_k)\tag*{$(\circ)_4$}\label{3aaa}
\]
for any $\lambda\in (0,1]$. But then, using $\langle \vv{y_kz_k}, \vv{x^*z_k} \rangle=\langle \vv{y_kz_k}, \vv{x^*y_k} \rangle+d^2(y_k,z_k)$, we obtain
\begin{equation*}
\langle \vv{y_kz_k}, \vv{x^*y_k} \rangle \leq \left(\frac{\eta\beta_k}{\lambda}-1\right) d^2(z_k,y_k) + \frac{1}{2}\left(\lambda-\gamma\beta_k + \lambda\gamma\beta_k + \frac{2\eta\beta_k}{\lambda}\right)d^2(z_k,x^{*}).
\end{equation*}
Using \ref{3b}, we obtain
\begin{align*}
d^2(x_{k+1},x^{*}) &\leq d^2(y^k,x^*) +\left(\frac{2\eta\beta_k\rho_k}{\lambda}-2\rho_k+\rho^2_k\right) d^2(z_k,y_k) \\
&\hphantom{\leq\mbox{}}-  \rho_k\left((1-\lambda)\gamma\beta_k - \lambda - \frac{2\eta\beta_k}{\lambda}\right)d^2(z_k,x^{*}).
\end{align*}
As $\gamma(t):= \exp_{z_k} (t\exp^{-1}_{z_k}y_k)$ is a geodesic line, we have
\[
d(x_{k+1},y_k)=d(\gamma(1-\rho_k),\gamma(1))=\rho_k d(z_k,y_k),
\]
which yields that $d(z_k,y_k)=\frac{1}{\rho_k}d(x_{k+1},y_k)$. Inserting this into the above, we get
\begin{align*}
d^2(x_{k+1},x^{*}) &\leq d^2(y_k,x^{*}) - \left(\frac{2 -\rho_k-\frac{2 \beta_k\eta}{\lambda}}{\rho_k}\right)d^2(x_{k+1},y_k)  \\ 
&\hphantom{\leq\mbox{}}-  \rho_k\left((1-\lambda)\gamma\beta_k - \lambda - \frac{2\eta\beta_k}{\lambda}\right) d^2(z_k ,x^{*})
\end{align*}
which is \ref{inertial_ineq__long_nonbasic}.
\end{proof}

From this, we almost immediately obtain the following lemma, which essentially is a nonlinear variant of \cite[Corollary 9]{GradLaraMarca24} with its case distinction resolved.

\begin{lemma}[{extending \cite[Corollary 9]{GradLaraMarca24}}]\label{inertail_easy_ineqs}
Let $f: K \times K \rightarrow \R$ be such that $(A2)$, $(A4)$ and $(A5)$ hold. Suppose that either among $(C1)^l$ or $(C1)^u$ hold. Suppose further that $x^{*} \in S(K,f)$. Then for every $k \in \N$:
\[
d^2(x_{k+1},x^{*}) \leq d^2(y_k,x^{*}) - \left(\frac{2 -\rho_k-4\eta\beta_k}{\rho_k}\right)d^2(x_{k+1},y_k).
\]
\end{lemma}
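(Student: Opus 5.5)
Apply Lemma \ref{GProposition6} with $\lambda = 1/2$ and show that, in both alternatives, the term involving $d^2(z_k,x^{*})$ can be dropped and the coefficient of $d^2(x_{k+1},y_k)$ weakened to $\frac{2-\rho_k-4\eta\beta_k}{\rho_k}$.

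\textbf{Alternative \ref{inertial_ineq__long_nonbasic}.} With $\lambda = 1/2$ we have $\frac{2\beta_k\eta}{\lambda} = 4\eta\beta_k$. The coefficient of $d^2(x_{k+1},y_k)$ is therefore exactly the one in the claim. The coefficient of $d^2(z_k,x^{*})$ becomes
\[
-\rho_k\left(\tfrac{1}{2}\gamma\beta_k - \tfrac12 - 4\eta\beta_k\right) = -\tfrac{\rho_k}{2}\left((\gamma-8\eta)\beta_k - 1\right).
\]
Under either $(C1)^l$ or $(C1)^u$ we have $\beta_k \geq \lowerbound$, and $\gamma - 8\eta > 0$ because $12\eta<\gamma$. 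Hence $(\gamma-8\eta)\beta_k \geq 1$. Since $\rho_k \geq 1-\rho>0$, this term is nonpositive and can be dropped, which gives the claim.

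\textbf{Alternative \ref{inertial_ineq_nonbasic}.} Here the inequality holds for all $\lambda\in[0,1]$; take again $\lambda=1/2$. The coefficient of $d^2(z_k,x^{*})$ is $-\frac{\rho_k}{2}(\gamma\beta_k - 1)$. This is nonpositive, since $\gamma\beta_k \geq (\gamma-8\eta)\beta_k \geq 1$, so the term can be dropped. For the remaining term, note that $4\eta\beta_k \geq 0$ and $\rho_k>0$, so
\[
-\frac{2-\rho_k}{\rho_k} \leq -\frac{2-\rho_k-4\eta\beta_k}{\rho_k}.
\]
Since $d^2(x_{k+1},y_k)\ge 0$, the bound from \ref{inertial_ineq_nonbasic} implies the claimed one.

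There is no real obstacle; the only point needing care is sign bookkeeping. Specifically, we need $\rho_k>0$, which follows from $(C2)$ or from $\rho<1$, and we need the lower bound on $\beta_k$, which both quantitative variants of $(C1)$ supply. Note that the upper bounds on $\beta_k$ play no role here. In particular, we do not need the bracket $2-\rho_k-4\eta\beta_k$ to be nonnegative, since the inequality is stated with that coefficient regardless of its sign.
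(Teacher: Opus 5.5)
Your proof is correct and follows the paper's argument: apply Lemma \ref{GProposition6} with $\lambda=\frac{1}{2}$, drop the $d^2(z_k,x^{*})$ term in either alternative using $\beta_k\geq\frac{1}{\gamma-8\eta}$ (so that $\gamma\beta_k-1\geq(\gamma-8\eta)\beta_k-1\geq 0$), and in the case $(+)_1$ weaken $\frac{2-\rho_k}{\rho_k}$ to $\frac{2-\rho_k-4\eta\beta_k}{\rho_k}$. Your explicit sign bookkeeping ($\rho_k\geq 1-\rho>0$ and $\gamma-8\eta>0$) spells out what the paper's short proof uses implicitly.
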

\begin{proof}
From Lemma \ref{GProposition6} we know that at least one of  \ref{inertial_ineq_nonbasic} or \ref{inertial_ineq__long_nonbasic} holds. Taking $\lambda=\frac{1}{2}$, the result then follows with the fact that, as $\beta_k\geq (1/(\gamma-8\eta)) \geq 0$ by either among $(C1)^l$ or $(C1)^u$,
\[
\frac{\rho_k(\gamma\beta_k-1)}{2} \geq \frac{\rho_k(\gamma\beta_k-1-8\eta\beta_k)}{2} = \frac{\rho_k(\beta_k(\gamma-8\eta)-1)}{2}  \geq 0
\]
and that further
\[
\frac{2-\rho_k}{\rho_k} \geq \frac{2 - \rho_k - 4\beta_k\eta}{\rho_k}
\]
for all $k \in \N$.
\end{proof}

This implies the following proposition, which we will use in our main theorem to argue that we are in the setting of Lemmas \ref{madness0} and \ref{madness}. 

\begin{lemma}[{extending \cite[Proposition 10]{GradLaraMarca24}}]\label{madness2}
Let $f: K \times K \rightarrow \R$ be such that $(A2)$, $(A4)$ and $(A5)$ hold. Suppose that either among $(C1)^l$ or $(C1)^u$ hold. Suppose further that $x^{*} \in S(K,f)$. Set $\varphi_k := d^2(x_k,x^{*})$ for every $k \in \N$. Then for every $k \in \N$:
\[
\varphi_{k+1} - \varphi_k + \left(\frac{2 -\rho_k-4\eta\beta_k}{\rho_k}\right)d^2(x_{k+1},y_k) \leq \alpha_k(\varphi_k - \varphi_{k-1}) +(\alpha_k^2 + \alpha_k)d^2(x_k, x_{k-1}).
\]
\end{lemma}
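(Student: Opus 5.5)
The plan is to combine Lemma \ref{inertail_easy_ineqs} with an estimate of $d^2(y_k,x^{*})$ coming from the inertial step. Lemma \ref{inertail_easy_ineqs} already gives
\[
\varphi_{k+1} \leq d^2(y_k,x^{*}) - \left(\frac{2 -\rho_k-4\eta\beta_k}{\rho_k}\right)d^2(x_{k+1},y_k).
\]
So it suffices to show
\[
d^2(y_k,x^{*}) \leq (1+\alpha_k)\varphi_k - \alpha_k\varphi_{k-1} + \alpha_k(1+\alpha_k)d^2(x_k,x_{k-1}).
\]
Substituting this bound and moving $\varphi_k$ to the left-hand side then gives exactly the claimed inequality, because $(1+\alpha_k)\varphi_k-\alpha_k\varphi_{k-1}-\varphi_k=\alpha_k(\varphi_k-\varphi_{k-1})$ and $\alpha_k(1+\alpha_k)=\alpha_k^2+\alpha_k$.

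For the bound on $d^2(y_k,x^{*})$, I would use the definition $y_k=\exp_{x_k}(-\alpha_k\exp^{-1}_{x_k}x_{k-1})$. Since $\alpha_k\in[0,\alpha]\subseteq[0,1)$, Lemma \ref{revCN} applies directly with $x=x_k$, $y=x_{k-1}$, $w=x^{*}$ and $t=\alpha_k$. It yields
\[
d^2(y_k,x^{*})\leq (1+\alpha_k)d^2(x_k,x^{*})-\alpha_k d^2(x_{k-1},x^{*})+\alpha_k(1+\alpha_k)d^2(x_k,x_{k-1}),
\]
which is precisely the required estimate.

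One point needs care: the index $k=0$, where $\varphi_{-1}=d^2(x_{-1},x^{*})$ and $y_0$ is built from the given starting points $x_0,x_{-1}\in K$. Here the same argument goes through verbatim, provided $\varphi_{-1}$ is read in this way, consistent with Lemma \ref{madness0}. I would also check the hypotheses of Lemma \ref{inertail_easy_ineqs}: $(A2)$, $(A4)$, $(A5)$, one of $(C1)^l$ or $(C1)^u$, and $x^{*}\in S(K,f)$. These are exactly the assumptions of the present statement.

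There is no real obstacle, since the argument is a two-line chaining of Lemma \ref{revCN} and Lemma \ref{inertail_easy_ineqs}. The only subtle point is ensuring that Lemma \ref{revCN} is applicable, namely that $t=\alpha_k\in[0,1]$, which holds by the standing parameter assumptions.
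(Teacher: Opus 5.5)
Your proposal is correct and follows essentially the same route as the paper. The paper also bounds $d^2(y_k,x^{*})$ via Lemma \ref{revCN} with $t=\alpha_k$ and then chains this with Lemma \ref{inertail_easy_ineqs}, reading $\varphi_{-1}=d^2(x_{-1},x^{*})$ implicitly as you do.
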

\begin{proof}
By definition, we have $y_k:=\exp_{x_k}(-\alpha_k\exp^{-1}_{x_k}x_{k-1})$. Using Lemma \ref{revCN}, we get 
\begin{align*}
d^2(y_k,x^*)&\leq d^2(x_k,x^*)+\alpha_k(d^2(x_k,x^*)-d^2(x_{k-1},x^*))+\alpha_k(1+\alpha_k)d^2(x_k,x_{k-1})\\
&=\varphi_k+\alpha_k(\varphi_k-\varphi_{k-1})+\alpha_k(1+\alpha_k)d^2(x_k,x_{k-1}).
\end{align*}
Combined with Lemma \ref{inertail_easy_ineqs}, this yields the result.
\end{proof}

As an application of Lemma \ref{madness0}, we now show that $\{x_k\}_k$ is quasi-Fej\'er monotone.

\begin{lemma}\label{almostfejer}
Let $f: K \times K \rightarrow \R$ be such that $(A2)$, $(A4)$ and $(A5)$. Suppose that $(C2)$ and either among $(C1)^l$ or $(C1)^u$ hold. Suppose further that $x^*\in S(K,f)$. Then:
\[
\forall k \in \N\ \forall n \geq k  \left(d^2(x_n,x^{*}) \leq \frac{1}{1-\alpha} d^2(x_k,x^{*}) + \frac{2}{1-\alpha}\sum_{i = k}^{n-1} \alpha_id^2(x_i, x_{i-1})\right).
\]
\end{lemma}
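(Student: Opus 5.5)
We will apply Lemma \ref{madness0} to the sequences coming from Lemma \ref{madness2}. Concretely, set $\varphi_k := d^2(x_k,x^{*})$ for $k\geq -1$,
\[
s_{k+1} := \left(\frac{2 -\rho_k-4\eta\beta_k}{\rho_k}\right)d^2(x_{k+1},y_k) \quad\text{and}\quad \delta_k := (\alpha_k^2+\alpha_k)d^2(x_k,x_{k-1})
\]
for $k \in \N$, with $s_0:=0$. Lemma \ref{madness2} then gives exactly the recursive inequality \eqref{phieq0}. To make Lemma \ref{madness0} applicable, we still have to check that all these sequences lie in $[0,\infty)$. For $\varphi_k$ and $\delta_k$ this is immediate.

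The only point needing care is the nonnegativity of $s_{k+1}$, and this is where $(C2)$ enters. Since $\rho_k \geq 1-\rho > 0$, the denominator is positive, so it suffices to show $2-\rho_k-4\eta\beta_k \geq 0$. By $(C2)$ we have $\rho_k \leq 1+\rho \leq 2-4\eta e$. By either $(C1)^l$ or $(C1)^u$ we have $\beta_k \leq e$. Together these give
\[
2-\rho_k-4\eta\beta_k \geq 2-(2-4\eta e)-4\eta e = 0.
\]
Hence $s_{k+1}\geq 0$ for all $k$.

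With these hypotheses in place, Lemma \ref{madness0} yields for all $k\in\N$ and $n\geq k$
\[
\varphi_n \leq \frac{1}{1-\alpha}\varphi_k + \frac{1}{1-\alpha}\sum_{i=k}^{n-1}\delta_i - \sum_{i=k+1}^n s_i .
\]
We then drop the nonpositive term $-\sum s_i$. Finally, since $\alpha_i\in[0,\alpha]\subseteq[0,1)$, we have $\alpha_i^2\leq \alpha_i$ and so $\delta_i \leq 2\alpha_i d^2(x_i,x_{i-1})$. This gives the claimed bound. There is no real obstacle here; the only step needing attention is the sign check on $s_{k+1}$ via $(C2)$.
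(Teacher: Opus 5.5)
Your proposal is correct and follows essentially the paper's proof: apply Lemma \ref{madness0} with $\varphi_k$, $\delta_k$ and $s_{k+1}$ taken from Lemma \ref{madness2}, check $s_{k+1}\geq 0$ via $(C2)$ and $\beta_k\leq e$, drop $-\sum s_i$, and bound $\alpha_k^2+\alpha_k\leq 2\alpha_k$. Defining $\varphi_{-1}$ and $s_0$ explicitly also fills in a small detail the paper leaves implicit.
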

\begin{proof}
Define $\varphi_k := d^2(x_k,x^{*})$ and $\delta_k := (\alpha_k^2 + \alpha_k) d^2(x_k, x_{k-1})$ for all $k \in \N$. Observe that, using Lemma \ref{madness2}, condition \eqref{phieq0} in Lemma \ref{madness0} holds with 
\begin{equation*}
s_{k+1} := \left(\frac{2-4\eta\beta_k-\rho_k}{\rho_k} \right)d^2(x_{k+1},y_k)
\end{equation*}
as 
\begin{equation*}
\frac{2-4\eta\beta_k-\rho_k}{\rho_k} \overset{(C2)}{\geq} \frac{1-4\eta\beta_k-\rho}{1+\rho} \overset{\beta_k\leq e}{\geq} \frac{1-4\eta e-\rho}{1+\rho} \overset{(C2)}{\geq} 0.
\end{equation*}
Hence, using that $(\alpha_k^2 + \alpha_k)\leq 2\alpha_k$, Lemma \ref{madness0} yields the result.
\end{proof}

\subsection{Convergence under condition $(C1)^l$}\label{la}

We now establish the convergence of the method \eqref{RIPPA} under the condition $(C1)^l$, that is under the assumption that there exists a $\Theta>0$ such that $\frac{1}{\gamma-8\eta} \leq \beta_k \leq e-\Theta$ for all $k \in \N$, where $e\leq \upperbound$. The first result we need in that vein is the following lower bound on a key quantity featuring in later arguments.

\begin{lemma}\label{inetial_greater_half}
Suppose that $(C1)^l$ and $(C2)$ hold. Then, for any $k\in\mathbb{N}$:
\[
(2-4\eta\beta_k-\rho_k)\rho_k \geq \frac{16\eta^2\Theta}{(\gamma-8\eta)} > 0.
\]
\end{lemma}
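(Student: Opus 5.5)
The bound comes from treating the two factors separately. I will bound each of $2-4\eta\beta_k-\rho_k$ and $\rho_k$ from below by a positive quantity using only $(C1)^l$ and $(C2)$, and then multiply the two bounds. Since both factors will turn out nonnegative, multiplying the lower bounds is legitimate.

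\emph{First factor.} By $(C1)^l$ we have $\beta_k\leq e-\Theta$, hence $4\eta\beta_k\leq 4\eta e-4\eta\Theta$. By $(C2)$ we have $\rho_k\leq 1+\rho$. Combining these,
\[
2-4\eta\beta_k-\rho_k \geq 2-4\eta e+4\eta\Theta-(1+\rho) = (1-\rho-4\eta e)+4\eta\Theta \geq 4\eta\Theta,
\]
where the last step uses $\rho\leq 1-4\eta e$ from $(C2)$.

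\emph{Second factor.} By $(C2)$, $\rho_k\geq 1-\rho\geq 4\eta e$. Since $\Theta>0$, $(C1)^l$ gives $e\geq \beta_k+\Theta>\beta_k\geq \frac{1}{\gamma-8\eta}$. Therefore
\[
\rho_k\geq 4\eta e\geq \frac{4\eta}{\gamma-8\eta}.
\]
This bound is positive because $(C1)^l$ forces $12\eta<\gamma$, as noted earlier in the paper, so in particular $\gamma-8\eta>0$.

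\emph{Conclusion.} Multiplying the two nonnegative lower bounds gives
\[
(2-4\eta\beta_k-\rho_k)\rho_k\geq \frac{16\eta^2\Theta}{\gamma-8\eta}.
\]
This is strictly positive because $\eta,\Theta>0$ and $\gamma-8\eta>0$.

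There is no serious obstacle. The one point needing care is to use the chain $1-\rho\geq 4\eta e$ together with $e>\frac{1}{\gamma-8\eta}$ to bound $\rho_k$ from below. Simply using $\rho_k\geq 1-\rho$ would not produce the stated constant in terms of $\gamma$ and $\eta$.
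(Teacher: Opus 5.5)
Your proof is correct and follows essentially the same route as the paper. Via $(C2)$ and $\beta_k\leq e-\Theta$, you bound $2-4\eta\beta_k-\rho_k$ below by $4\eta\Theta$ and $\rho_k$ below by $1-\rho\geq 4\eta e$, then use $e\geq \frac{1}{\gamma-8\eta}$; the only cosmetic difference is that the paper first obtains $16\eta^2\Theta e$ and applies this lower bound on $e$ at the very end.
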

\begin{proof}
Using $(C1)^l$ and $(C2)$, we have
\begin{align*}
(2-4\eta\beta_k-\rho_k)\rho_k &\geq (1-4\eta\beta_k-\rho )(1-\rho) \geq (4\eta e -4\eta\beta_k)4\eta e \\ 
&\geq (4\eta e -4\eta(e-\Theta))4\eta e = 16\eta^2 \Theta e.
\end{align*}
As we have $e \geq 1/(\gamma-8\eta)$, and as $\eta, \Theta>0$, this implies the result.
\end{proof}

Essential for our quantitative convergence theorem for \eqref{RIPPA} under $(C1)^l$ is an effective estimate for the asymptotic behavior of $d(z_k,y_k)$, which will here take the form of an explicit bound $\iota: (0, \infty)\times\mathbb{N} \rightarrow \N$ such that for any $\varepsilon>0$ and any $n\in\mathbb{N}$, there exists a $k\in\mathbb{N}$ with $n\leq k \leq \iota(\varepsilon,n)$ and $d(z_k,y_k) < \varepsilon$. The following result derives an explicit instantiation of such an $\iota$, by giving a quantitative analysis of \cite[Theorem 11, (a)]{GradLaraMarca24} in the context of $(C1)^l$.

\begin{lemma}\label{inertial_approx_fp}
Let $f: K \times K \rightarrow \R$ be such that $(A2)$, $(A4)$ and $(A5)$ hold. Suppose that $(C1)^l$ and $(C2)$ hold. Suppose further that $ x^{*} \in S(K,f)$. Let $S\in (0,\infty)$ be such that 
\[
\sum_{k = 0}^{\infty}\alpha_kd^2(x_k, x_{k-1}) \leq  S.
\]
Then
\[
\forall \varepsilon > 0\ \forall n \in \N\ \exists k \in \N \left(n \leq k \leq \left\lceil \frac{(b + 2S)(\gamma - 8\eta)}{16 \eta^2 \Theta (1-\alpha)\varepsilon^2}\right \rceil + n +1 \text{ and } d(z_k,y_k) < \varepsilon\right),
\]
where $b \geq d^2(x_0,x^{*})$.
\end{lemma}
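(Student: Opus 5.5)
We would derive the statement as a direct instantiation of Lemma \ref{madness}, using the setup already prepared in the proof of Lemma \ref{almostfejer}. Concretely, we put $\varphi_k := d^2(x_k,x^*)$, $\delta_k := (\alpha_k^2+\alpha_k)d^2(x_k,x_{k-1})$ and
\[
s_{k+1} := \left(\frac{2-4\eta\beta_k-\rho_k}{\rho_k}\right)d^2(x_{k+1},y_k).
\]
By Lemma \ref{madness2} together with the nonnegativity of the coefficient (shown in the proof of Lemma \ref{almostfejer} via $(C2)$ and $\beta_k\leq e-\Theta\leq e$), condition \eqref{phieq0} holds. Moreover, $\sum_k\delta_k\leq 2\sum_k\alpha_kd^2(x_k,x_{k-1})\leq 2S$, so the summability hypothesis of Lemma \ref{madness} is satisfied with $2S$ in place of $S$. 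We take $b\geq\varphi_0$ as given.

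The key step is to relate $s_{k+1}$ to $d^2(z_k,y_k)$. As observed in the proof of Lemma \ref{GProposition6}, $d(x_{k+1},y_k)=\rho_k d(z_k,y_k)$ because $x_{k+1}$ lies on the geodesic line through $z_k$ and $y_k$. Hence
\[
s_{k+1} = (2-4\eta\beta_k-\rho_k)\rho_k\, d^2(z_k,y_k) \geq \frac{16\eta^2\Theta}{\gamma-8\eta}\, d^2(z_k,y_k)
\]
by Lemma \ref{inetial_greater_half}. Consequently, $s_{k+1}<\varepsilon':=\frac{16\eta^2\Theta}{\gamma-8\eta}\varepsilon^2$ forces $d(z_k,y_k)<\varepsilon$.

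Now apply Lemma \ref{madness} with this $\varepsilon'$ and the given $n$. It yields some $k$ with $n\leq k\leq \lceil (b+2S)/((1-\alpha)\varepsilon')\rceil+n+1$ and $s_{k+1}<\varepsilon'$. Substituting $\varepsilon'$ gives exactly the bound
\[
\left\lceil \frac{(b+2S)(\gamma-8\eta)}{16\eta^2\Theta(1-\alpha)\varepsilon^2}\right\rceil+n+1,
\]
and the previous paragraph shows that $d(z_k,y_k)<\varepsilon$.

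The only mildly delicate point is the index shift: $s_{k+1}$ involves $\beta_k,\rho_k$ and the pair $(x_{k+1},y_k)$, which matches the conclusion $s_{k+1}<\varepsilon$ of Lemma \ref{madness} at the same $k$. One must also make sure that Lemma \ref{inetial_greater_half} is applied with index $k$ and that $\rho_k>0$ (guaranteed by $(C2)$), so that the division defining $s_{k+1}$ is legitimate.
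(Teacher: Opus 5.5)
Your proposal is correct and follows the paper's proof essentially verbatim. It uses the same choice of $\varphi_k$, $\delta_k$, $s_{k+1}$, the bound $\sum_k\delta_k\le 2S$, and an application of Lemma \ref{madness} via Lemma \ref{madness2}, followed by the conversion $s_{k+1}=(2-4\eta\beta_k-\rho_k)\rho_k\, d^2(z_k,y_k)\geq \frac{16\eta^2\Theta}{\gamma-8\eta}d^2(z_k,y_k)$ obtained from $d(x_{k+1},y_k)=\rho_k d(z_k,y_k)$ and Lemma \ref{inetial_greater_half}; your explicit rescaling $\varepsilon'=\frac{16\eta^2\Theta}{\gamma-8\eta}\varepsilon^2$ simply spells out the step the paper compresses into ``combined with the above''.
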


\begin{proof}
Define $\varphi_k := d^2(x_k,x^{*})$, $\delta_k := (\alpha_k^2 + \alpha_k) d^2(x_k,x_{k-1})$ and \begin{equation*}
s_{k+1} := \left(\frac{2-4\eta\beta_k-\rho_k}{\rho_k} \right) d^2(x_{k+1},y_k).
\end{equation*}		
Note that it follows from $\alpha_k \in [0,1)$ that
\begin{equation*}
\sum_{k = 0}^{\infty} \delta_k = \sum_{k = 0}^{\infty}(\alpha_k^2 + \alpha_k)d^2(x_k,x_{k-1}) \leq 2\sum_{k = 0}^{\infty}\alpha_k d^2(x_k, x_{k-1}) \leq 2S.
\end{equation*}
Using Lemma \ref{madness2}, we get that Lemma \ref{madness} applies, so that for any $\varepsilon > 0$ and $n\in\mathbb{N}$, we get
\begin{equation*}
\exists k \in \N  \left(n \leq k \leq \left\lceil \frac{b + 2S}{ (1-\alpha)\varepsilon}\right \rceil + n+1  \text{ and } \left(\frac{2-4\eta\beta_k-\rho_k}{\rho_k}\right) d^2(x_{k+1},y_k) < \varepsilon\right).
\end{equation*}
As in Lemma \ref{GProposition6}, we have $\rho_kd(z_k,y_k)=d(x_{k+1},y_k)$, so that Lemma \ref{inetial_greater_half} implies that
\begin{equation*}
\left(\frac{16\eta^2\Theta}{(\gamma-8\eta)} \right) d^2(z_k,y_k) \leq \left(\frac{2-4\eta\beta_k-\rho_k}{\rho_k}\right) d^2(x_{k+1},y_k).
\end{equation*}
Combined with the above, this yields the result.
\end{proof}
		
\begin{remark}\label{rem:extraCond}
If in the above proposition, we impose the stronger property that 
\[
\sum_{k = 0}^{\infty}d^2(x_k, x_{k-1}) \leq  S'
\]
for some $S' \in (0,\infty)$, the resulting bound becomes simpler and can be derived in a more direct way (similar to the proof of \cite[Corollary 20]{GradLaraMarca24}). To outline the corresponding construction, assume that $(C2)$ and either among $(C1)^l$ or $(C1)^u$ hold. It is easy to see that for all $n,m \in \N$, there exists $k \in \N$ with $n \leq k \leq m+n$ and
\begin{equation*}
d(x_{k+1}, x_k)+ \alpha_k d(x_k, x_{k-1}) \leq \sqrt{\frac{8S'}{m}}.
\end{equation*}
Note that as $\gamma(t):= \exp_{x_k} (t\exp^{-1}_{x_k}x_{k-1})$ is a geodesic line, we have 
\[
d(y_k,x_k)=d(\gamma(-\alpha_k),\gamma(0))=\alpha_k d(x_k,x_{k-1}).
\]
Using that $\rho_kd(z_k,y_k)=d(x_{k+1},y_k)$ as in Lemma \ref{GProposition6}, we then have
\[
d(z_k, y_k) \leq \rho_k^{-1} d(x_{k+1}, y_k) \leq \rho_k^{-1}(d(x_{k+1},x_k)+\alpha_k d(x_k,x_{k-1})).
\]
Using the fact that by $(C2)$, and either among $(C1)^l$ or $(C1)^u$, we have that $\rho_k \geq \frac{4 \eta}{(\gamma-8\eta)}$ for all $k \in \N$, this implies that
\[
\forall\varepsilon>0\ \forall n \in \N\ \exists k \in \N \left(n \leq k \leq \left\lceil \frac{S'(\gamma-8\eta)^2}{2\eta^2 \varepsilon^2}\right\rceil+n+1\text{ and } d(z_k, y_k) < \varepsilon\right).
\]
A particular a priori assumption under which the above stronger property holds is condition $(C3)$ from \cite{GradLaraMarca24}, where it is assumed that there exists an $\overline{\alpha}=\xi/(2+\xi)$ such that 
\[
0\leq\alpha_k\leq\alpha_{k+1}\leq\alpha<\overline{\alpha}<1
\]
for all $k\in\mathbb{N}$, where
\[
\xi:=\frac{1-4\eta e-\rho}{1+\rho}>0.
\]
Concretely, following the same argument as given in the proof of \cite[Theorem 18]{GradLaraMarca24}, it can be shown that under condition $(C3)$ it holds that
\[
\sum_{k=0}^\infty d^2(x_{k+1},x_k)\leq\frac{1}{\xi-(2+\xi)\alpha}\cdot\frac{\mu_0}{1-\alpha}<+\infty.
\]
In particular, the above condition on the inertia parameters is a type of a priori condition which is sometimes easier to verify in practice than $\sum_{k=0}^\infty\alpha_k d^2(x_k,x_{k-1})<+\infty$, similar in spirit to the a priori conditions on the inertia parameters already pioneered by Alvarez and Attouch \cite{AlvarezAttouch01} (see also the seminal work of Bo\c{t}, Csetnek and Hendrich \cite{BotCsetnekHendrich2015} on splitting methods with inertia terms as well as \cite{SahuSharmaGautam2026} for similar discussions in the context of Hadamard manifolds).
\end{remark}
	
In our main convergence theorem, we will also utilize the following lemma which later allows us to transfer our effective convergence results for $\{x_k\}_k$ to the sequences $\{y_k\}_k$ and $\{z_k\}_k$.

\begin{lemma}\label{inertial_sum_good}
Let $f: K \times K \rightarrow \R$ be such that $(A2)$, $(A4)$ and $(A5)$ hold. Suppose that $(C1)^l$ and $(C2)$ hold. Suppose further that $x^{*} \in S(K,f)$. Let $M: (0,\infty) \rightarrow \N$ be such that
\[
\forall\varepsilon>0\left(\sum_{k = M(\varepsilon)}^{\infty}\alpha_kd^2(x_k,x_{k-1}) < \varepsilon\right).
\]
Then
\begin{equation*}
\forall\varepsilon>0\ \forall n \geq k \geq M\left(\frac{8 \eta^2 \Theta (1-\alpha) \varepsilon^2}{(\gamma-8\eta)}\right)  \left(d(z_{n} ,y_n)  < \sqrt{\frac{ (\gamma-8\eta)}{16 \eta^2 \Theta(1-\alpha) }}d(x_k,x^{*}) + \varepsilon\right).
\end{equation*}
\end{lemma}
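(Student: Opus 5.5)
The plan is to bound $d^2(z_n,y_n)$ at a single index $n$ by the one-step decrease that Lemma \ref{madness2} provides, and then control that decrease through the quasi-Fej\'er estimate of Lemma \ref{madness0}, started at $k$ instead of at $0$.

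\textbf{Step 1 (single-step bound).} Fix $\varepsilon>0$ and $n\geq k\geq M(\cdot)$. Take $\varphi_i:=d^2(x_i,x^*)$, $\delta_i:=(\alpha_i^2+\alpha_i)d^2(x_i,x_{i-1})$ and
\[
s_{i+1}:=\left(\frac{2-4\eta\beta_i-\rho_i}{\rho_i}\right)d^2(x_{i+1},y_i).
\]
As shown in the proof of Lemma \ref{almostfejer}, Lemma \ref{madness2} gives \eqref{phieq0}. Lemma \ref{madness0}, applied with starting index $k$ and endpoint $n+1$, then yields
\[
s_{n+1}\leq \sum_{i=k+1}^{n+1}s_i\leq \frac{1}{1-\alpha}\varphi_k+\frac{1}{1-\alpha}\sum_{i=k}^{n}\delta_i .
\]
Here we drop $\varphi_{n+1}\geq 0$ and all $s_i$ with $i\neq n+1$. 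We also use $n+1\geq k$, which holds since $n\geq k$.

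\textbf{Step 2 (convert to $d(z_n,y_n)$).} As in the proof of Lemma \ref{inertial_approx_fp}, we have $\rho_n d(z_n,y_n)=d(x_{n+1},y_n)$. Together with Lemma \ref{inetial_greater_half}, this gives
\[
\frac{16\eta^2\Theta}{\gamma-8\eta}\,d^2(z_n,y_n)\leq s_{n+1}.
\]
Next, $\delta_i\leq 2\alpha_i d^2(x_i,x_{i-1})$, and because $k\geq M(\varepsilon')$ the tail sum satisfies $\sum_{i\geq k}\alpha_i d^2(x_i,x_{i-1})<\varepsilon'$. Set $A:=\frac{\gamma-8\eta}{16\eta^2\Theta(1-\alpha)}$. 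We obtain
\[
d^2(z_n,y_n)< A\,d^2(x_k,x^*)+2A\varepsilon' .
\]

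\textbf{Step 3 (choose $\varepsilon'$ and take roots).} Choose $\varepsilon'=\frac{8\eta^2\Theta(1-\alpha)\varepsilon^2}{\gamma-8\eta}$, which is exactly the argument of $M$ in the statement. Then $2A\varepsilon'=\varepsilon^2$. Using $\sqrt{a+b}\leq\sqrt a+\sqrt b$, it follows that
\[
d(z_n,y_n)<\sqrt{A}\,d(x_k,x^*)+\varepsilon,
\]
which is exactly the claim. The inequality stays strict because the tail bound on the inertia sum is strict.

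The only delicate point is the bookkeeping of indices in Lemma \ref{madness0}. It must be invoked from index $k$, not $0$, so that both the factor $\frac{1}{1-\alpha}\varphi_k$ and the tail of the $\delta_i$ appear. This is also why the bound is stated uniformly for all $n\geq k$. Everything else is routine.
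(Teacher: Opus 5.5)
Your proposal is correct and follows essentially the paper's proof. Both arguments feed Lemma \ref{madness2} into Lemma \ref{madness0} started at index $k$, convert $s_{n+1}$ into a bound on $d^2(z_n,y_n)$ via $\rho_n d(z_n,y_n)=d(x_{n+1},y_n)$ and Lemma \ref{inetial_greater_half}, and conclude by subadditivity of the root. The only difference is that you use the finite form of Lemma \ref{madness0} up to $n+1$ rather than passing to the infinite tail sum, which makes the index bookkeeping and the choice of the argument of $M$ cleaner than in the paper's write-up.
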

\begin{proof}
We define $\varphi_k, \delta_k$ and $s_{k+1}$ as in Lemma \ref{almostfejer} and argue as therein to apply Lemma \ref{madness0}, by which we get
\begin{equation*}
\sum_{i = k+1}^{\infty}s_i \leq \frac{1}{1-\alpha}\varphi_k + \frac{1}{1-\alpha}\sum_{i = k}^{\infty}\delta_i
\end{equation*}
for any $k\in\mathbb{N}$. Hence, for $\varepsilon > 0$ and $k \geq  M((1-\alpha)\varepsilon^2/2)$, we obtain
\begin{equation*}
\sum_{i = k+1}^{\infty}\left(\frac{2-4\eta\beta_i-\rho_i}{\rho_i}\right) d^2(x_{i+1},y_i) < \frac{1}{1-\alpha}d^2(x_k, x^{*}) + \varepsilon.
\end{equation*} 
We can now argue similar as at the end of Lemma \ref{inertial_approx_fp} to derive
\begin{equation*}
\left(\frac{16\eta^2\Theta}{(\gamma-8\eta)} \right) d^2(z_i,y_i) \leq \left(\frac{2-4\eta\beta_i-\rho_i}{\rho_i}\right) d^2(x_{i+1},y_i).
\end{equation*}
This yields
\begin{equation*}
d^2(z_{n} ,y_n)  <\frac{ (\gamma-8\eta)}{16 \eta^2 \Theta(1-\alpha) }d^2(x_k, x^{*}) + \frac{ (\gamma-8\eta)}{16 \eta^2 \Theta }\varepsilon
\end{equation*}
for any $\varepsilon>0$ and any $n \geq k \geq  M((1-\alpha)\varepsilon/2)$. This yields the claim, using sub-additivity of the root.
\end{proof}

We now show our first main quantitative convergence result for the sequences $\{x_k\}_k$, $\{y_k\}_k$ and $\{z_k\}_k$ generated by \eqref{RIPPA}, set in the context of the assumption $(C1)^l$.

\begin{theorem}\label{THEOREM3}
Let $X$ be a Hadamard manifold and let $K\subseteq X$ be metrically closed such that the complete geodesic lines determined by any two points of $K$ lie in $K$. Let $f: K \times K \rightarrow \R$ be such that $(A2)$, $(A4)$ and $(A5)$ hold. Fix $\alpha, \rho \in [0,1)$ and $x_0, x_{-1} \in K$ as well as $\{ \beta_{k}\}_k \subseteq (0,\infty)$, $\{ \alpha_{k}\}_k \subseteq [0,\alpha]$ and $\{\rho_k\}_k \subseteq [1-\rho,1+\rho]$ such that $(C1)^l$ and $(C2)$ hold. Assume that \eqref{RIPPA} is well-defined and let $\{x_k\}_k, \{y_k\}_k$ and $\{z_k\}_k$ be its generated sequences. Suppose further that $S(K,f) \neq \emptyset$. If 
\[
\sum_{k=0}^\infty\alpha_k d^2(x_k,x_{k-1})<+\infty,
\]
then the sequences $\{x_k\}_k$, $\{y_k\}_k$ and $\{z_k\}_k$ converge to the (unique) equilibrium point $x^{*}$ of $f$. Further, we have the following rate of convergence: If $M: (0,\infty) \rightarrow \N $ and $S \in (0,\infty)$ are such that $M$ is decreasing and
\[
\sum_{k = 0}^{\infty}\alpha_kd^2(x_k, x_{k-1}) \leq  S\text{ as well as }\forall \varepsilon > 0\left(\sum_{k = M(\varepsilon)}^{\infty}\alpha_kd^2(x_k, x_{k-1}) < \varepsilon\right),
\]
then
\begin{equation*}
\forall \varepsilon > 0\ \forall k \geq \left(\left \lceil \frac{C}{\varepsilon^2} \right \rceil+ M(E \varepsilon^2) + 2\right) \left(d(x_k,x^{*}), d(y_k, x^{*}), d(z_k, x^{*}) < \varepsilon\right),
\end{equation*}
where 
\[
C := \mathrm{max}\left\{1, \frac{4 (3\gamma-28\eta)}{\eta^2 \Theta (1-\alpha)}\right\}\cdot\frac{256 (b+2S)(3\gamma-16\eta)^2}{\eta^2\Theta(1-\alpha)^2(3\gamma-28\eta)}
\]
and 
\[
E := \mathrm{min}\left\{1, \frac{\eta^2 \Theta (1-\alpha)}{4 (\gamma-8\eta)}\right\}\cdot\frac{(1-\alpha)}{32} 
\]
as well as $b \geq d^2(x_0, x^{*})$.
\end{theorem}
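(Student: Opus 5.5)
The plan is to combine three ingredients already in place: the quasi-Fej\'er monotonicity from Lemma \ref{almostfejer}, the recurrence bound from Lemma \ref{inertial_approx_fp}, and the transfer estimate from Lemma \ref{inertial_sum_good}. The idea is to first produce a single index at which $d(x_k,x^*)$ is small, and then propagate that smallness to all later indices.

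\textbf{Step 1: small $d(z_k,x^*)$ from small $d(z_k,y_k)$.} I first need to turn smallness of $d(z_k,y_k)$ into smallness of $d(z_k,x^*)$. For this I return to Lemma \ref{GProposition6}, or more directly to inequalities \ref{3aa}/\ref{3aaa}, with a fixed choice such as $\lambda=\frac12$.

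By pseudomonotonicity, in both cases one gets
\[
\frac{\lambda}{\beta_k}\langle\vv{y_kz_k},\vv{x^*z_k}\rangle\le \frac{\lambda}{2}\Big(\frac{\lambda}{\beta_k}-\gamma+\lambda\gamma+\frac{2\eta}{\lambda}\Big)d^2(z_k,x^*)+\eta d^2(z_k,y_k).
\]
With $\lambda=1/2$ and $\beta_k\ge 1/(\gamma-8\eta)$, the coefficient of $d^2(z_k,x^*)$ becomes strictly negative, of the order $-(3\gamma-28\eta)/16$ or so. This is presumably where $3\gamma-28\eta>0$ enters, via $12\eta<\gamma$.

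Using (CS) on the left-hand side, $\langle\vv{y_kz_k},\vv{x^*z_k}\rangle\ge -d(y_k,z_k)d(x^*,z_k)$, I obtain a quadratic inequality in $t=d(z_k,x^*)$ of the form $c\,t^2\le A\,d(y_k,z_k)\,t+\eta d^2(y_k,z_k)$. Solving it gives $d(z_k,x^*)\le L\,d(z_k,y_k)$ for an explicit constant $L$ built from $\gamma,\eta$ and $e\le 1/(4\eta)$.

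\textbf{Step 2: small $d(x_{k+1},x^*)$ at one index.} Next I use the triangle inequality together with the relaxation geometry:
\[
d(x_{k+1},x^*)\le d(x_{k+1},z_k)+d(z_k,x^*)=|1-\rho_k|\,d(z_k,y_k)+d(z_k,x^*)\le (1+L)\,d(z_k,y_k).
\]
Hence $d(x_{k+1},x^*)$ is small whenever $d(z_k,y_k)$ is small.

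\textbf{Step 3: locate such an index and propagate.} Let $n:=M(E\varepsilon^2)$. Lemma \ref{inertial_approx_fp}, applied with precision $\delta\sim\varepsilon/(\text{const})$, produces some $k$ with $n\le k\le \lceil C/\varepsilon^2\rceil+n+1$ and $d(x_{k+1},x^*)$ as small as required.

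For any $m\ge k+1$, Lemma \ref{almostfejer} then gives
\[
d^2(x_m,x^*)\le \tfrac{1}{1-\alpha}d^2(x_{k+1},x^*)+\tfrac{2}{1-\alpha}\sum_{i\ge k+1}\alpha_i d^2(x_i,x_{i-1}).
\]
The tail sum is below $E\varepsilon^2$ because $k+1\ge M(E\varepsilon^2)$. I will use monotonicity of $M$ to handle any substitution between the two precisions coming from Step 3 and Step 4.

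Choosing the constants so that both terms are at most $\varepsilon^2/4$-type quantities gives $d(x_m,x^*)<\varepsilon/2$, or whatever fraction the final step needs.

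\textbf{Step 4: transfer to $y_m$ and $z_m$.} Lemma \ref{inertial_sum_good}, applied with the same $k$, bounds $d(z_m,y_m)$ by a constant times $d(x_{k+1},x^*)$ plus a small term. Then Step 1 gives $d(z_m,x^*)\le L\,d(z_m,y_m)$, which is small. Finally, $d(y_m,x^*)\le d(y_m,z_m)+d(z_m,x^*)$.

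This is why the precisions in Steps 2 and 3 must be tuned finer, and it explains the extra factor $\max\{1,\cdot\}$ in $C$ and $\min\{1,\cdot\}$ in $E$. Qualitative convergence follows directly from the rate, and uniqueness of $x^*$ from Step 1 applied to two solutions.

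\textbf{Main obstacle.} I expect the hardest part to be Step 1. Obtaining the bound $d(z_k,x^*)\le L\,d(z_k,y_k)$ with the specific constants $3\gamma-28\eta$ and $3\gamma-16\eta$ requires the right choice of $\lambda$ and careful use of (CS). The remaining bookkeeping of constants is routine.
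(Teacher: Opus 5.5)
Your Step 1 fails as written, and it is the step that carries the whole argument. In your merged inequality, $\lambda=\frac12$ makes the coefficient of $d^2(z_k,x^*)$ equal to $\frac14\bigl(\frac{1}{2\beta_k}-\frac{\gamma-8\eta}{2}\bigr)$. This vanishes at $\beta_k=\frac{1}{\gamma-8\eta}$, a value that $(C1)^l$ explicitly allows because its lower bound is non-strict. Nor is the coefficient bounded away from $0$ uniformly over the range $(C1)^l$ permits. With $t=d(z_k,x^*)$ and $s=d(z_k,y_k)$, your quadratic then degenerates to $0\le A\,st+\eta s^2$ and yields no $L$. This is the same computation as in Lemma \ref{inertail_easy_ineqs}, where $\lambda=\frac12$ only makes the $d^2(z_k,x^*)$-term nonnegative.

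You need $\lambda$ strictly below $\frac12$. With $\lambda=\frac38$, multiply by $\beta_k/\lambda$ and use \eqref{CS} to get
\[
\tfrac12\bigl(\tfrac58\gamma\beta_k-\tfrac{16}{3}\eta\beta_k-\tfrac38\bigr)t^2\le st+\tfrac83\eta\beta_k\,s^2 .
\]
The bound $\beta_k\ge\frac{1}{\gamma-8\eta}$ makes the left coefficient at least $\frac{3\gamma-28\eta}{24(\gamma-8\eta)}>\frac1{12}$; this is where $3\gamma-28\eta$ genuinely enters, via $\gamma>12\eta$. Also $\frac83\eta\beta_k\le\frac23$ by $e\le\frac1{4\eta}$, and solving the quadratic gives $L<13$.

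The paper uses the same $\lambda=\frac38$, but instead of solving the quadratic it splits into $d(z_k,x^*)\le d(z_k,y_k)$ or not. It also treats the case $f(y_k,z_k)\ge f(y_k,x^*)$ separately with $\lambda=\frac18$. Your merging of the two cases is legitimate, since that case yields a stronger inequality.

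With this repair the rest of your plan goes through, and your Step 4 is organized differently from the paper. The paper bounds $d(z_n,x^*)\le 3d(z_n,y_n)+d(x_{n+1},x^*)$. It then feeds Lemma \ref{inertial_sum_good} a later index $j$ at which the already established rate for $\{x_k\}_k$, run at a finer precision, makes $d(x_j,x^*)$ small. You instead feed that lemma the single index $k+1$ from Step 3 and use the pointwise bound $d(z_m,x^*)\le L\,d(z_m,y_m)$, which holds at every $m$. This avoids the second pass through the $x$-rate. The price is an extra factor $(1+L)^2$ in the precision you must request from Lemma \ref{inertial_approx_fp}. Since $L$ is an absolute constant, the stated $C$ and $E$ do absorb this, but you have to carry out that check rather than call it routine.
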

\begin{proof}
It is enough to show the quantitative result. Here, we first provide a rate for $\{x_k\}_k$. Given $\varepsilon >0$, using Lemma \ref{inertial_approx_fp}, take $k \in \N$ with
\[
M\left(\frac{(1-\alpha)\varepsilon^2}{32}\right)  \leq k \leq \left\lceil\frac{T}{\varepsilon^2} \right \rceil+ M\left(\frac{(1-\alpha)\varepsilon^2}{32}\right) +1
\]
such that
\[
d(z_k,y_k) < \frac{\sqrt{1-\alpha}}{64}\cdot\frac{(3\gamma - 28\eta)}{ (3\gamma-16\eta)}  \varepsilon < \frac{\sqrt{1-\alpha}}{32}\varepsilon,
\]
where
\[
T := \frac{ 64^2(b + 2S)(\gamma - 8\eta)(3 \gamma - 16 \eta)^2 }{16 \eta^2 \Theta (1-\alpha)^2(3\gamma-28 \eta)^2}.
\]
Note in particular that $k\leq \left \lceil C/\varepsilon^2\right \rceil + M(E \varepsilon^2)+1$. We now first establish the bound 
\begin{equation} 
d(z_k,x^{*}) < \frac{\sqrt{1-\alpha}}{8}\varepsilon.\tag{$-$}\label{eq3}
\end{equation}
For that we proceed by a case distinction similar to Lemma \ref{GProposition6}. First, using $z_k \in \mathrm{Prox}_{\beta_k f(y_k,\cdot)}( y_k )$ and applying Lemma \ref{Lemma11P} to $f(y_k,\cdot)$ with $y=x^*$ yields \ref{3aa} as in Lemma \ref{GProposition6}. Similar to Lemma \ref{GProposition6}, we consider two cases:

\textbf{Case 1:} $f(y_k,z_k) \geq f(y_k, x^*)$. From \ref{3aa}, we obtain
\[
0 \leq \frac{\lambda}{2}\left(\frac{\lambda}{\beta_k}-\gamma + \lambda \gamma\right)d^2(z_k,x^{*})^2 + \frac{\lambda}{\beta_k}\langle \vv{y_kz_k}, \vv{z_kx^*} \rangle
\]
for all $\lambda\in [0,1]$. Using \eqref{CS} and the fact that $\beta_k \geq \lowerbound$, the above implies
\[
\frac{1}{2}\left((1-\lambda)\frac{\gamma}{\gamma-8\eta}-\lambda\right)d^2(z_k,x^{*}) \leq d(z_k,y_k)d(z_k,x^*)
\]
for all $\lambda \in (0,1]$. Either $d(z_k,x^*)=0$, in which case \eqref{eq3} holds immediately, or $d(z_k,x^*)>0$, where we then get
\[
\frac{1}{2}\left((1-\lambda)\frac{\gamma}{\gamma-8\eta}-\lambda\right)d(z_k,x^{*}) \leq d(z_k,y_k)
\]
for all $\lambda \in (0,1]$. This in particular holds for $\lambda = \frac{1}{8}$ and we thus have
\[
\frac{3\gamma + 4 \eta}{8(\gamma - 8 \eta)}d(z_k,x^{*}) \leq d(z_k,y_k)< \frac{\sqrt{(1-\alpha)}}{32}\varepsilon < \frac{\sqrt{1-\alpha}}{8}\frac{3\gamma + 4 \eta}{8(\gamma - 8 \eta)} \varepsilon.
\]
so that we get \eqref{eq3} in that case.

\textbf{Case 2:}  $f(y_k,z_k) < f(y_k, x^*)$. As in Lemma \ref{GProposition6}, we obtain \ref{3aaa}, and so
\[
\frac{1}{2}\left((1-\lambda)\gamma\beta_k-\lambda- \frac{2\eta\beta_k}{\lambda}\right)d^2(z_k,x^{*}) \leq \frac{\eta\beta_k}{\lambda} d^2(z_k,y_k) +\langle \vv{y_kz_k}, \vv{z_kx^*} \rangle
\]
for all $\lambda\in (0,1]$. Using \eqref{CS}, we get
\[
\frac{1}{2}\left((1-\lambda)\gamma\beta_k-\lambda- \frac{2\eta\beta_k}{\lambda}\right)d^2(z_k,x^{*}) \leq \frac{\eta\beta_k}{\lambda} d^2(z_k,y_k) +d(z_k,y_k)d(z_k,x^*).
\]
We now make two further case distinctions.

\textbf{Case 2a:}  $d(z_k,x^{*})\leq d(z_k,y_k)$. As we in particular have $d(z_k,y_k) < \frac{\sqrt{1-\alpha}}{32}\varepsilon$, \eqref{eq3} follows immediately.

\textbf{Case 2b:} $d(z_k, x^{*})> d(z_k,y_k)$. This yields
\[
\frac{1}{2}\left((1-\lambda)\gamma\beta_k-\lambda- \frac{2\eta\beta_k}{\lambda}\right)d(z_k,x^{*}) \leq \left(1+\frac{\eta\beta_k}{\lambda} \right)d(z_k,y_k).
\]
for all $\lambda \in (0,1]$. We multiply both sides by $\frac{\lambda}{\lambda+\beta_k \eta}$ and rewrite the expression slightly to get
\begin{equation*}\label{sigh}
\frac{1}{2(\frac{\lambda}{\beta_k} +  \eta)}\left((\lambda(1-\lambda)\gamma-2\eta)-\frac{\lambda^2}{\beta_k}\right) d(z_k,x^{*})\leq d(z_k,y_k)
\end{equation*}
for all $\lambda \in (0,1]$. Setting $\lambda = \frac{3}{8}$ and using $\beta_k \geq 1/(\gamma-8\eta)$, we obtain
\begin{equation*}
(\lambda(1-\lambda)\gamma-2\eta)-\frac{\lambda^2}{\beta_k} \geq (\lambda (1-\lambda) \gamma - 2 \eta) - \lambda^2(\gamma-8\eta)  = \frac{6\gamma -56 \eta}{64} >0
\end{equation*}
since $12 \eta < \gamma$. Then further, again as $\beta_k \geq 1/(\gamma-8\eta)$, we get
\begin{equation*}
\frac{1}{2(\frac{\lambda}{\beta_k} +  \eta)}\Bigg((\lambda(1-\lambda)\gamma-2\eta)-\frac{\lambda^2}{\beta_k}\Bigg) \geq \frac{1}{2\bigg(\frac{3(\gamma-8\eta)}{8}+\eta\bigg)}\bigg(\frac{6\gamma-56 \eta}{64}\bigg)= \frac{(3\gamma - 28 \eta)}{8(3\gamma-16\eta)}.
\end{equation*}
Together, the above implies
\[
\frac{(3\gamma - 28 \eta)}{8(3\gamma-16\eta)}d(z_k, x^{*}) \leq d(z_k, y_k)< \frac{\sqrt{1-\alpha}}{8}\cdot\frac{(3\gamma - 28\eta)}{ 8(3\gamma-16\eta)} \varepsilon,
\]
so that we also get \eqref{eq3} in this case. As in Lemma \ref{GProposition6}, we have $\rho_kd(z_k,y_k)=d(x_{k+1},y_k)$. Since $\rho_k \leq 1+ \rho \leq 2$ by $(C2)$, using the triangle inequality we get
\begin{align*}
d(x_{k+1}, x^{*}) &\leq \rho_kd(z_k , y_k)+ d(y_k , z_k) + d(z_k, x^{*}) \\
&\leq 3d(z_k, y_k) + d(z_k,x^{*}) 
\end{align*}
With the bound on $d(z_k, y_k)$ and \eqref{eq3}, we thus obtain
\[
d(x_{k+1}, x^{*}) < \frac{3}{32} \sqrt{1-\alpha}\varepsilon + \frac{4}{32} \sqrt{1-\alpha}\varepsilon =  \frac{7}{32}\sqrt{1-\alpha} \varepsilon.
\]
But now, as in particular $k \geq M\left((1-\alpha)\varepsilon^2/32\right)$, Lemma \ref{almostfejer} yields
\begin{equation*}
d(x_n ,x^{*}) \leq \sqrt{\frac{1}{1-\alpha}}d(x_{k+1} , x^{*}) + \frac{\varepsilon}{4} < \frac{7}{32}\varepsilon + \frac{1}{4}\varepsilon = \frac{15}{32} \varepsilon<\varepsilon
\end{equation*}
for all $n \geq k+1$. This implies the rate for $\{x_k\}_k$ to $x^*$. We now turn to the rates for $\{z_k\}_k$ and $\{y_k\}_k$. Using triangle inequality, we get
\[
d(z_n,x^{*}) \leq d(z_n, y_n) + d(x_{n+1}, y_n)+ d(x_{n+1}, x^{*})
\]
for all $n\in\mathbb{N}$. Using $\rho_nd(z_n,y_n)=d(x_{n+1},y_n)$ and $\rho_n \leq 2$, we obtain
\[
d(z_n ,x^{*}) \leq 3 d(z_n, y_n) + d(x_{n+1}, x^{*}).
\]
Using Lemma \ref{inertial_sum_good}, we obtain
\begin{equation*}
\forall j \in \N\ \forall n \geq j \geq \left(M\left(\frac{ \eta^2 \Theta (1-\alpha)\varepsilon^2}{32 (\gamma-8\eta)}\right) +1\right) \left( d(z_n ,y_n)<\sqrt{\frac{ (\gamma-8\eta)}{16 \eta^2 \Theta(1-\alpha) }}d(x_j ,x^{*})+ \frac{\varepsilon}{16}\right).
\end{equation*}
As $M$ is decreasing, the above inequality holds for any $n\geq j$ such that
\[
j \geq j_0+1:=\left\lceil \frac{T}{\varepsilon^2} \frac{8^2(\gamma- 8 \eta)}{16 \eta^2 \Theta (1-\alpha)} \right \rceil + M\left(\frac{ \eta^2 \Theta (1-\alpha)^2\varepsilon^2}{2 \cdot 8^2 (\gamma-8\eta)}\right) +1.
\] 
For a $j\geq j_0+1$ as above, the above rate for $\{x_k\}_k$ to $x^*$ then in particular implies
\[
d(x_j, x^{*}) < \frac{\varepsilon}{16 \sqrt{\frac{\gamma-8\eta}{16 \eta^2 \Theta (1-\alpha)}}}
\] 
Combined, the above yields $d(z_n, y_n) <\varepsilon/8$ for all $n\geq j_0 +2$. Finally, this implies
\[
d(z_n, x^{*}) \leq 3 d(z_n, y_n) + d(x_{n+1}, x^{*}) < \frac{3}{8} \varepsilon + \frac{4}{8} \varepsilon = \frac{7}{8} \varepsilon
\]
for any $n\in\mathbb{N}$ such that
\[
n \geq \mathrm{max}\left\{\left\lceil \frac{T}{\varepsilon^2}\right\rceil+M\left(\frac{(1-\alpha)\varepsilon^2}{32}\right) , j_0 \right\} +2.
\]
Further, for an $n$ as above, a simple application of the triangle inequality yields
\[
d(y_n, x^{*}) \leq  d(z_n, y_n)+ d(z_n,x^{*}) < \frac{1}{8} \varepsilon + \frac{7}{8} \varepsilon = \varepsilon.
\]
This yields the rates for $\{z_k\}_k$ and $\{y_k\}_k$ after some slightly tedious but obvious simplifications.
\end{proof}

\begin{remark}\label{rem:meta}
Notice that the bound $S \in (0,\infty)$ on $\sum_{k = 0}^{\infty}\alpha_kd^2(x_k, x_{k-1})$ featuring in Theorem \ref{THEOREM3} above can in fact be computed from the rate $M: (0,\infty) \rightarrow \N$ assumed for the same series. Concretely, note that $\sum_{k = M(1)}^{\infty}\alpha_kd^2(x_k, x_{k-1}) < 1$ by the defining property of $M$, so that
\[
\sum_{k = 0}^{\infty}\alpha_kd^2(x_k, x_{k-1})\leq M(1)\cdot\max\{\alpha_kd^2(x_k, x_{k-1})\mid k<M(1)\}+1.
\]
Further, while a rate of convergence $M$ for the series $\sum_{k=0}^{\infty}\alpha_kd^2(x_k,x_{k-1}) < +\infty$ as in Theorem \ref{THEOREM3} above always exists, based on results from computability theory, it is generally not possible to obtain a computable such $M$ (see e.g.\ \cite{Kohlenbach2008}). However, it is often possible to obtain a computable so-called rate of metastability, that is a bound $M'(\varepsilon,g)$ such that
\[
\forall\varepsilon > 0\ \forall g: \N \rightarrow \N\ \exists N \leq M'(\varepsilon,g)\ \forall m,n \in [N; N+g(N)] \left(\sum_{k = m}^{n} \alpha_kd^2(x_k,x_{k-1}) < \varepsilon\right).
\]
Indeed, general logical metatheorems from the proof mining program \cite{Kohlenbach2008} (recall also the references in the introduction) guarantee that in an appropriate logical framework one can extract an effective $M'$ from a corresponding proof of $\sum_{k=0}^{\infty}\alpha_kd^2(x_k,x_{k-1}) < +\infty$. In fact, a construction thereof is already immediate from an upper bound $\sum_{k=0}^{\infty}\alpha_kd^2(x_k,x_{k-1}) \leq S$, where it follows from \cite[Proposition 2.27]{Kohlenbach2008} (see also \cite[Remark 2.29]{Kohlenbach2008}) that $M'(\varepsilon,g):=\tilde{g}^{(\lceil S\cdot \varepsilon^{-1}\rceil)}(0)$ with $\tilde{g}(n):=n+g(n)$ is a rate of metastability. In particular, such a rate can hence immediately be given in the context of the parameter conditions from Remark \ref{rem:extraCond}. 

While the previous arguments can be adapted to this computationally weaker assumption of a rate of metastability for the series, it would then naturally restrict us to only be able to obtain rates of metastability instead of full rates of convergence in Theorem \ref{THEOREM3} above (as well as Theorem \ref{THEOREM3a} below), which is not the focus of this paper. While computable rates of convergence for $M$ are generally difficult to obtain, there are common online choices for $\alpha_k$ (already pioneered by Alvarez and Attouch \cite{AlvarezAttouch01}) where an explicit and effective rate $M$ for $\sum_{k=0}^{\infty}\alpha_kd^2(x_k,x_{k-1}) < +\infty$ can be directly given. Concretely, consider the online choice discussed in \cite[Remark 12]{GradLaraMarca24}:
\[
\alpha_k:=\min\left\{\alpha,\frac{\theta^k}{d^2(x_k,x_{k-1})}\right\}
\]
for some $\theta\in (0,1)$, supposing (w.l.o.g.) that $x_k\neq x_{k-1}$ for all $k\in\mathbb{N}$. Then one has 
\[
\sum_{k=n}^{\infty}\alpha_kd^2(x_k,x_{k-1}) \leq \sum_{k=n}^{\infty}\theta^k=\frac{1}{1-\theta}-\sum_{k=0}^{n-1}\theta^k= \frac{\theta^{n}}{1-\theta}
\]
so that $M(\varepsilon):=\frac{\log(\varepsilon(1-\theta))}{\log(\theta)}+1$ is a corresponding rate of convergence.
\end{remark}

\begin{remark}\label{rem:generalization}
In the context of assumption $(C1)^l$, the quantitative result contained in Theorem \ref{THEOREM3} is, to our knowledge, already novel in the Euclidean case. In terms of assumptions on the bifunction, we find that compared to \cite{GradLaraMarca24}, $(A3)$ can be dropped completely. Further, if $(A1)$ is added as an assumption (which, compared to \cite{GradLaraMarca24}, here crucially contains the lower semicontinuity of the bifunction in its right argument in the absence of $(A3)$), then the assumptions that $S(K,f) \neq \emptyset$ and that \eqref{RIPPA} is well-defined can both be dropped from Theorem \ref{THEOREM3}, using \cite[Theorem 3.9]{DespresPischke2026} and \cite[Proposition 4.4]{DespresPischke2026}, respectively.
\end{remark}

\begin{remark}\label{rem:Hilbert}
As highlighted in the introduction, since we do not rely on the finite dimensionality of the manifold in any way, the result extends mutatis mutandis to suitable Hadamard spaces which satisfy all the required metric properties outlined in Section \ref{sec:HadamardMani}. Concretely, the results given here already hold over Hadamard spaces $(X,d)$ which satisfy the following two properties: 
\begin{enumerate}
\item For each pair of distinct points $x,y\in X$, there is a unique metric line $r:\mathbb{R}\to X$ which passes through $x$ and $y$.
\item For any $x,y,w\in X$ and $t\in [0,1]$, for the unique point $z$ on the metric line passing through $x$ and $y$ such that $d(z,x)=td(x,y)$ and $d(z,y)=(1+t)d(x,y)$, it holds that
\[
d^2(z,w) \leq  (1-t)d^2(x,w)+t d^2(y,w)-t(1-t)d^2(x,y).
\]
\end{enumerate}
Spaces closed under metric lines as in item (1) above are also key for the notion of hyperbolic space studied by Reich and Shafrir \cite{ReichShafrir1990}. Writing $(1+t)x\ominus ty$ for the unique point $z$ on the metric line passing through $x$ and $y$ such that $d(z,x)=td(x,y)$ and $d(z,y)=(1+t)d(x,y)$ and $(1-t)x\oplus ty$ for the unique point $z$ on the metric line passing through $x$ and $y$ such that $d(z,x)=td(x,y)$ and $d(z,y)=(1-t)d(x,y)$, we can see that
\[
\exp_{x} (t\exp^{-1}_{x}y)=(1-t)x\oplus ty\text{ and }(1+t)x\ominus ty=\exp_{x} (-t\exp^{-1}_{x}y)
\]
for all $x,y\in X$ and $t\in [0,1]$. Lemma \ref{revCN} thereby shows that Hadamard manifolds have the property from item (2) above. Lemma \ref{CN} holds just as well for general Hadamard spaces if we take $z=(1-t)x\oplus ty$. Next to Hadamard manifolds, another class of spaces that satisfies the above properties are Hilbert spaces. Now, with that notation the method \eqref{RIPPA} can be defined over such Hadamard spaces via
\[
\begin{cases}
y_k := (1+\alpha_k)x_k\ominus \alpha_k x_{k-1},\\
z_{k} \in \argmin_{x\in K}\left\{f(y_k,x)+ \frac{1}{2\beta_k}d^2(y_k,x)\right\},\\
x_{k+1} := \begin{cases} \rho_k z_k\oplus (1-\rho_k)y_k, &\text{if }\rho_k\in [1-\rho,1],\\
\rho_k z_k\ominus (\rho_k-1)y_k, &\text{if }\rho_k\in [1,1+\rho].
\end{cases}
\end{cases}
\]
for similar parameter restrictions as before, and the above results extend suitably to this purely metric variant, and hence in particular to Hilbert spaces (a detailed derivation tailored to that context can be found in the master thesis of the first author \cite{Despres2026}). In that case, the rates remain exactly the same. Further, the convergence is then still strong, that is w.r.t.\ the norm and not relative to any notion of weak convergence. Akin to \cite{DespresPischke2026,Pischke2025}, this is also here essentially due to the fact that equilibrium problems for strongly quasiconvex and pseudomonotone bifunctions have a unique solution.
\end{remark}

\begin{remark}\label{rem:quasiFejer}
The quantitative result contained in Theorem \ref{THEOREM3} above could also have been obtained by an application of the general results for quasi-Fej\'er monotone sequences under general metric regularity assumptions as developed by the second author in \cite{Pischke2023} (extending the work of Kohlenbach, L\'opez-Acedo and Nicolae \cite{KohlenbachLN2019}). We refer to the master thesis of the first author \cite{Despres2026} for further discussions in that vein.
\end{remark}

\subsection{Convergence under condition $(C1)^u$}

We now establish the convergence of the method \eqref{RIPPA} under the condition $(C1)^u$, that is under the assumption that there exists a $\theta>0$ such that $\lowerbound + \theta \leq \beta_k \leq e$ for all $k \in \N$, where $e \leq \upperbound$.

Similar to before, an essential ingredient for our quantitative convergence theorem for \eqref{RIPPA} under this assumption is an effective estimate for the asymptotic behavior of $d(z_k,y_k)$, again taking the form of an explicit bound $\iota: (0, \infty)\times\mathbb{N} \rightarrow \N$ such that for any $\varepsilon>0$ and any $n\in\mathbb{N}$, there exists a $k\in\mathbb{N}$ with $n\leq k \leq \iota(\varepsilon,n)$ and $d(z_k,y_k) < \varepsilon$. 

To motivate how this is achieved under the assumption $(C1)^u$, first recall how we have proceeded in the analogous result for $(C1)^l$, that is Lemma \ref{inertial_approx_fp}: Using \ref{inertial_ineq__long_nonbasic} from Lemma \ref{GProposition6} with $\lambda=\frac{1}{2}$, we suitably instantiated \eqref{phieq0} from Lemma \ref{madness0}, after which we were able to infer the existence of a point $k \geq n$ (together with a respective bound) where 
\[
\left(\frac{2-4\eta\beta_k-\rho_k}{\rho_k}\right) d^2(x_{k+1},y_k)
\]
is suitably small. While $(C1)$ and $(C2)$ as in \cite{GradLaraMarca24} allow for $(2-4\eta\beta_k-\rho_k)/\rho_k \rightarrow 0$ (setting $\rho_k = 2-4\eta e$ for all $k \in \N$ and considering $\beta_k \rightarrow e$), the upper bound $\Theta >0$ such that $\beta_k \leq e-\Theta $ for all $k \in \N$ contained in $(C1)^l$ prohibits this behavior, so that in that context $(2-4\eta\beta_k-\rho_k)\rho_k$ can be bounded uniformly away from zero so that it can be ensured that also $\rho_k^{-2}d^2(x_{k+1},y_k) =d^2(z_k, y_k)$ is suitably small at this point. In that context, one can then even generalize the lower bound on $\{\beta_k\}_k$ to $\beta_k \geq \lowerbound$ for all $k \in \N$.

Now, we can actually treat the non-strict upper bound $\beta_k \leq \upperbound$ by using \ref{inertial_ineq__long_nonbasic} from Lemma \ref{GProposition6} with $\lambda$ chosen adaptively as $\lambda_k$ with $k$ (or rather, the values of $\beta_k$) instead of setting $\lambda=\frac{1}{2}$. Using an appropriate $\lambda_k$, we will be able to infer the existence of a point $k\geq n$ (together with a respective bound) where 
\[
\left(\frac{2 -\rho_k-\frac{2 \beta_k\eta}{\lambda_k}}{\rho_k}\right) d^2(x_{k+1},y_k)
\]
is suitably small. But to now simultaneously ensure that we are able to uniformly bound $\left(2 -\rho_k-2 \beta_k\eta/\lambda_k\right)\rho_k$ away from zero for our adaptive $\lambda_k$, $\beta_k \rightarrow \lowerbound$ must be excluded, leading to the assumption $(C1)^u$ which introduces a strict lower bound $\theta>0$ such that $\lowerbound + \theta \leq \beta_k$ for all $k \in \N$. Once this is guaranteed, we are able to infer that also $\rho_k^{-2}d^2(x_{k+1},y_k) =d^2(z_k, y_k)$ is suitably small, similar to before.

To now choose these $\lambda_k$ adaptively with $\beta_k$, we will rely on some technical machinery introduced for a similar purpose in \cite{DespresPischke2026}.\footnote{The strategy of choosing $\lambda_k$ adaptively with $\beta_k$ used here was first used in \cite{DespresPischke2026} to extend the parameter restrictions of the proximal point type method from \cite{IusemLara2021} (and its nonlinear version studied in \cite{DespresPischke2026}) from the open interval $(\lowerbound,\upperbound)$ to the closed interval $[\lowerbound,\upperbound]$.} For $x \geq \lowerbound$, set
\begin{equation}
N(x)  := \frac{\gamma x}{2(1+\gamma x)} + \frac{\sqrt{\gamma^2 x^2 - 8 x\eta(1+\gamma x)}}{2(1+\gamma x)},\tag{$\dagger$}\label{N}
\end{equation}
where $\gamma$ and $\eta$ are as in $(A4)$ and $(A5)$. Notice that $N$ is well-defined on $[\lowerbound,\infty)$, as for $x \geq \lowerbound$ the value $\gamma^2x^2 - 8x\eta(1+\gamma x)$ is positive. The key properties of $N$ which we require are collected in the following lemma. All items apart from item (v) already appear with corresponding proofs in \cite{DespresPischke2026}, so that we only provide details for the last item here.

\begin{lemma}[{extending \cite[Lemma 4.11]{DespresPischke2026}}]\label{Nporps}
Consider $N$ as defined in \eqref{N}. Then, we have the following:
\begin{enumerate}
\item[(i)] $N(\lowerbound) = \frac{1}{2},$
\item[(ii)] $N(\upperbound) > \frac{4\gamma}{3(8\eta+2\gamma)} > \frac{1}{2}$,
\item[(iii)] $N: [\lowerbound, \infty) \rightarrow [0,1]$ is increasing,
\item[(iv)] $ x \mapsto \frac{N(x)}{2x}$ is decreasing for $x \in [\lowerbound, \upperbound]$,
\item[(v)] For any $\theta > 0$:
\[
N\left(\lowerbound + \theta\right) \geq \frac{1}{2-\frac{2\theta(\gamma-8\eta)^2}{(1+\theta(\gamma-8\eta))(2\gamma-8\eta)}}.
\]
\end{enumerate}
\end{lemma}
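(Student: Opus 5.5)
We prove item (v) without touching the explicit square-root formula. The idea is to recognise $N(x)$ as the larger root of a quadratic in $\lambda$ and then show that the proposed lower bound lies between the two roots. Items (i)–(iv) are taken from \cite[Lemma 4.11]{DespresPischke2026}.

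\textbf{Step 1 (quadratic characterisation).} By the quadratic formula, for $x\geq\lowerbound$ the two roots of
\[
q_x(\lambda):=(1+\gamma x)\lambda^2-\gamma x\lambda+2\eta x
\]
are $\frac{\gamma x\pm\sqrt{\gamma^2x^2-8x\eta(1+\gamma x)}}{2(1+\gamma x)}$. Hence $N(x)$ in \eqref{N} is the larger root. Since the leading coefficient $1+\gamma x$ is positive, any $\mu$ with $q_x(\mu)\leq 0$ lies between the roots, and so $\mu\leq N(x)$. Dividing $q_x(\mu)\leq0$ by $x\mu^2>0$ and rearranging, it suffices to verify
\[
\frac1x\leq \frac{\gamma\mu(1-\mu)-2\eta}{\mu^2}=\gamma\Big(\frac1\mu-1\Big)-\frac{2\eta}{\mu^2}.
\]

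\textbf{Step 2 (plug in the data).} Abbreviate $a:=\gamma-8\eta>0$ and
\[
\delta:=\frac{2\theta a^2}{(1+\theta a)(2\gamma-8\eta)}.
\]
Set $x:=\lowerbound+\theta=(1+\theta a)/a$ and $\mu:=1/(2-\delta)$, which is the claimed lower bound.

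The left-hand side becomes
\[
\frac1x=\frac{a}{1+\theta a}=a-\frac{\theta a^2}{1+\theta a}=a-\delta(\gamma-4\eta),
\]
where the last equality uses $2\gamma-8\eta=2(\gamma-4\eta)$.

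The right-hand side, using $1/\mu=2-\delta$, becomes
\[
\gamma(1-\delta)-2\eta(2-\delta)^2=a-\delta(\gamma-8\eta)-2\eta\delta^2 .
\]

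The required inequality therefore reduces to $-\delta(\gamma-4\eta)\leq-\delta(\gamma-8\eta)-2\eta\delta^2$. This simplifies to $2\eta\delta^2\leq4\eta\delta$, that is $0\leq\delta\leq2$.

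\textbf{Step 3 (checking $\delta\leq2$).} Nonnegativity of $\delta$ is clear. For the upper bound, we have $1+\theta a>\theta a$, so
\[
\delta<\frac{2a}{2\gamma-8\eta}=\frac{2(\gamma-8\eta)}{2(\gamma-4\eta)}<1 .
\]
Hence $\mu=1/(2-\delta)\in[\tfrac12,1)$ is well defined. By Step 2 we have $q_x(\mu)\leq0$, and Step 1 then gives $N(x)\geq\mu$, which is (v).

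The only delicate point is Step 1: we must confirm that $\mu$ lies between the two roots rather than merely showing $q_x(\mu)\le 0$ in isolation. This is exactly what the positive leading coefficient $1+\gamma x$ ensures. The rest is the bookkeeping in Step 2, where the identity $\theta a^2/(1+\theta a)=\delta(\gamma-4\eta)$ makes everything collapse.
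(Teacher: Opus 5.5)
Your proof is correct, but it follows a different route from the paper's.

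\textbf{The paper's route.} The paper works directly with the closed form. It rewrites
$N(x)=(\gamma+\sqrt{\gamma(\gamma-8\eta)-8\eta/x})/(2/x+2\gamma)$. It then bounds the square root from below by $\gamma-8\eta$, which is its value at the left endpoint $x=\lowerbound$ and follows from $1/x\le\gamma-8\eta$. Finally, it checks by direct algebra that $(2\gamma-8\eta)/(2/x+2\gamma)$ evaluated at $x=\lowerbound+\theta$ equals the stated expression.

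\textbf{Your route.} You never touch the square root. You use the characterisation of $N(x)$ as the larger root of $(1+\gamma x)\lambda^2-\gamma x\lambda+2\eta x$, the same identity that drives Lemma \ref{collection}. You then verify that the target value $\mu=1/(2-\delta)$ makes this upward parabola nonpositive, which reduces everything to the elementary condition $0\le\delta\le 2$. I checked your bookkeeping, in particular $1/x=a-\delta(\gamma-4\eta)$ and the right-hand side $a-\delta a-2\eta\delta^2$, and it is right. The observation that the positive leading coefficient places $\mu$ between the roots is exactly what is needed.

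\textbf{What each approach buys.}
\begin{itemize}
\item Your version exposes the slack: since in fact $\delta<1$, one gets $q_x(\mu)<0$ and hence strict inequality in (v).
\item Your version gives a general recipe: any $\mu$ with $q_x(\mu)\le 0$ is a valid lower bound, so sharper estimates could be produced the same way.
\item Your method is guess-and-verify and presupposes the target formula.
\item The paper's argument is constructive. It shows where the formula comes from, namely replacing the square-root term by its minimum over the domain.
\end{itemize}

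Both treatments defer items (i)--(iv) to \cite[Lemma 4.11]{DespresPischke2026}.
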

\begin{proof}
We only prove (v), the proofs for the remaining items can be found in the proof of \cite[Lemma 4.11]{DespresPischke2026}. Note first that a simple calculation yields
\[
N(x) = \frac{\gamma}{\frac{2}{x}+2\gamma}+ \frac{\sqrt{\gamma^2-\frac{8\eta}{x}-8\gamma\eta}}{\frac{2}{x}+2\gamma}.
\]
Further, we have $\sqrt{\gamma(\gamma-8\eta)-\frac{8\eta}{x}} \geq (\gamma-8\eta)$ for all $x \geq \lowerbound$. Together, this yields
\begin{align*}
N\left(\lowerbound+\theta \right) &\geq \frac{\gamma+(\gamma-8\eta)}{\frac{2}{\frac{1}{\gamma-8\eta}+\theta}+2\gamma}= \frac{2\gamma - 8 \eta }{\frac{2(\gamma-8\eta)}{1+\theta (\gamma - 8 \eta)}+ 2 \gamma} \\
&= \frac{(2\gamma - 8\eta)(1+\theta(\gamma - 8 \eta))}{2(2\gamma - 8\eta)+2\gamma \theta (\gamma - 8\eta)} = \frac{1 + \theta(\gamma - 8 \eta)}{2 + \frac{2\gamma \theta(\gamma-8\eta)}{2\gamma - 8 \eta}}.
\end{align*}
For shorter notation, set $t := \theta(\gamma - 8 \eta)$. We continue to observe that  
		\begin{align*}
			\frac{2}{1+ t} + \frac{2 \gamma t}{(1+t)(2\gamma-8\eta)} &= 2 - \left(2-\frac{2}{1+t} - \frac{2\gamma t}{(1+t)(2\gamma - 8\eta)}\right)\\[.5em]
			& = 2- \left(\frac{2(1+t)(2\gamma - 8 \eta)-2 (2\gamma - 8 \eta)-2\gamma t}{(1+t)(2\gamma - 8\eta)}\right)\\[.5em]
			& = 2- \left(\frac{2t(2\gamma - 8 \eta)-2\gamma t}{(1+t)(2\gamma - 8\eta)}\right)\\[.5em]
			&= 2- \left(\frac{2 t (\gamma - 8 \eta)}{(1+t)(2\gamma-8\eta)}\right).
		\end{align*}
		Finally, combining the above, we obtain the result:
		\begin{align*}
			N\left(\lowerbound + \theta \right) \geq \frac{1 + \theta(\gamma - 8 \eta)}{2 + \frac{2\gamma \theta(\gamma-8\eta)}{2\gamma - 8 \eta}} = \frac{1}{\frac{2}{(1+\theta(\gamma - 8\eta))}+ \frac{2\gamma\theta(\gamma-8\eta)}{(1+\theta(\gamma-8\eta))(2\gamma - 8 \eta)}} = \frac{1}{2 - \frac{2\theta(\gamma - 8 \eta)^2}{(1 + \theta(\gamma - 8\eta))(2\gamma - 8 \eta)}}.
		\end{align*}
\end{proof}

The function $N$ can now be used to introduce our adaptive parameter choices $\lambda_k$ and the following lemma provides both the definition and essential property related to this. A variant of this result, featuring a slightly refined $\lambda_k$, already appears in \cite[Lemma 4.12]{DespresPischke2026}, and while the proof is essentially the same, we provide the details here for self-containedness.

\begin{lemma}\label{collection}
Suppose that $12 \eta < \gamma$ holds. For any $k \in \N$, let $\beta_k \in [\lowerbound, \upperbound]$ and consider
\[
c_k := \frac{\lambda_k}{2\beta_k}, \qquad
	d_k := \frac{\lambda_k(1-\lambda_k)(1+\gamma\beta_k)}{2\beta_k}-\eta
\]
for $\lambda_k:=N(\beta_k)$, where $N$ is defined as in \eqref{N}. Then for any $k \in \N$: $c_k - d_k = 0$.
\end{lemma}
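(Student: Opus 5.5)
The identity $c_k-d_k=0$ says exactly that $\lambda_k=N(\beta_k)$ is a root of a quadratic polynomial in $\lambda$. The plan is to derive that quadratic from the definitions of $c_k$ and $d_k$ and then check that $N(\beta_k)$, as defined in \eqref{N}, is its larger root. Fix $k\in\N$ and write $\beta:=\beta_k$ and $\lambda:=\lambda_k$. Since $\beta>0$, multiplying $c_k-d_k$ by $2\beta$ shows that $c_k-d_k=0$ is equivalent to
\[
\lambda-\lambda(1-\lambda)(1+\gamma\beta)+2\beta\eta=0 .
\]
Expanding $\lambda(1-\lambda)(1+\gamma\beta)=\lambda(1+\gamma\beta)-\lambda^2(1+\gamma\beta)$ and collecting terms, this is equivalent to
\[
(1+\gamma\beta)\lambda^2-\gamma\beta\lambda+2\beta\eta=0 .
\]

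Next I check that the discriminant of this quadratic is nonnegative, so that \eqref{N} is well defined at $x=\beta$ and produces a real number. The discriminant is
\[
\gamma^2\beta^2-8\beta\eta(1+\gamma\beta)=\beta\bigl(\beta\gamma(\gamma-8\eta)-8\eta\bigr).
\]
From $\beta\geq\lowerbound$ we get $\beta\gamma(\gamma-8\eta)\geq\gamma$. Since $12\eta<\gamma$, we have $\gamma>8\eta$, so the bracket is positive and hence the discriminant is positive. This is exactly the observation already made in the paper right after \eqref{N}. Note that only the lower bound $\beta_k\geq\lowerbound$ is used here; the upper bound $\beta_k\leq\upperbound$ plays no role in the identity itself.

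By the quadratic formula, the two roots are
\[
\frac{\gamma\beta\pm\sqrt{\gamma^2\beta^2-8\beta\eta(1+\gamma\beta)}}{2(1+\gamma\beta)} .
\]
The root with the plus sign coincides term by term with $N(\beta)$ as given in \eqref{N}. Hence $\lambda_k=N(\beta_k)$ satisfies the quadratic, and by the equivalences above $c_k-d_k=0$.

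I do not expect a genuine obstacle. The only points needing care are that every rewriting step is an equivalence (this uses $\beta_k>0$) and that the square root in \eqref{N} is real, which is the discriminant check above.
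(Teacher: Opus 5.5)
Your proposal is correct and matches the paper's proof: rewrite $c_k-d_k=0$ as the quadratic $(1+\gamma\beta_k)\lambda^2-\gamma\beta_k\lambda+2\beta_k\eta=0$ and note that $N(\beta_k)$ is its larger root by the quadratic formula. Your explicit discriminant check is the same observation the paper makes right after \eqref{N}, and you are right that only the lower bound on $\beta_k$ is needed.
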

\begin{proof}
We have 
\[
c_k - d_k = \frac{\lambda_k - \lambda_k(1-\lambda_k)(1+\gamma\beta_k)}{2\beta_k}+\eta.
\]
To see that $c_k - d_k = 0$, notice that
\[
c_k - d_k = 0 \Leftrightarrow \lambda_k^2 - \lambda_k\frac{\gamma\beta_k}{(1+\gamma\beta_k)}+\frac{2\beta_k\eta}{(1+\gamma\beta_k)} = 0.
\]
By definition of $N$, the right hand side now holds by the quadratic formula.
\end{proof}

Having introduced our adaptive convex combination parameters $\lambda_k$, we also here rely on a specific lower bound on a key quantity featuring in later arguments, similar to Lemma \ref{inetial_greater_half}.

\begin{lemma}\label{inertial_greater_lambda_k}
Suppose that $(C1)^u$ and $(C2)$ hold. Then, for all $k \in \N$:
\[
\left(2 -\rho_k-\frac{2 \beta_k}{N(\beta_k)}\eta\right)\rho_k\geq \frac{16\eta^2\theta}{(1+\theta(\gamma - 8 \eta))(2\gamma - 8 \eta)} > 0.
\]
\end{lemma}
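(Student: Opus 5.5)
The idea is to reduce everything to the value of $N$ at the right endpoint $e$ of the admissible interval for $\beta_k$. I would then use item (v) of Lemma \ref{Nporps} to bound $N(e)$ away from $\frac{1}{2}$, and (C2) to bound $\rho_k$ from both sides.

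\textbf{Step 1 (reduce to $e$).} Under $(C1)^u$ we have $\beta_k\in[\lowerbound,\upperbound]$. By Lemma \ref{Nporps}(iv) the map $x\mapsto N(x)/(2x)$ is decreasing there. Since $\beta_k\le e\le\upperbound$, this gives $\frac{N(\beta_k)}{2\beta_k}\ge\frac{N(e)}{2e}$, that is,
\[
\frac{2\beta_k\eta}{N(\beta_k)}\le\frac{2e\eta}{N(e)}.
\]
(Here $e\ge\lowerbound+\theta$ because $e\ge\beta_k$.)

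\textbf{Step 2 (use (C2) on the first factor).} From $\rho_k\le 1+\rho\le 2-4\eta e$ we get $2-\rho_k\ge 4\eta e$. Combined with Step 1 this yields
\[
2-\rho_k-\frac{2\beta_k\eta}{N(\beta_k)}\ge 4\eta e-\frac{2e\eta}{N(e)}=2e\eta\Big(2-\frac{1}{N(e)}\Big).
\]

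\textbf{Step 3 (bound $N(e)$).} Set
\[
q:=\frac{2\theta(\gamma-8\eta)^2}{(1+\theta(\gamma-8\eta))(2\gamma-8\eta)}.
\]
By Lemma \ref{Nporps}(iii) $N$ is increasing, and $e\ge\lowerbound+\theta$. Item (v) of Lemma \ref{Nporps} then gives $N(e)\ge N(\lowerbound+\theta)\ge\frac{1}{2-q}$. Hence $2-\frac{1}{N(e)}\ge q>0$. In particular the first factor is positive, so it is legitimate to multiply it by a lower bound for $\rho_k$.

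\textbf{Step 4 (combine).} By (C2), $\rho_k\ge1-\rho\ge4\eta e$. Therefore
\[
\Big(2-\rho_k-\frac{2\beta_k\eta}{N(\beta_k)}\Big)\rho_k\ge 2e\eta q\cdot 4\eta e=8\eta^2e^2q.
\]
Finally, $e\ge\frac{1}{\gamma-8\eta}$ gives $e^2(\gamma-8\eta)^2\ge1$. This turns $8\eta^2e^2q$ into at least
\[
\frac{16\eta^2\theta}{(1+\theta(\gamma-8\eta))(2\gamma-8\eta)},
\]
which is positive since $12\eta<\gamma$ and $\theta>0$.

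The only delicate point is Step 1. One must check that the monotonicity of $N(x)/(2x)$ is applied within its stated domain $[\lowerbound,\upperbound]$, which $(C1)^u$ guarantees. One must also get the direction of the inequality right: the quotient is decreasing, so evaluating at the larger point $e$ gives the smaller value of $N(x)/(2x)$, and hence the larger value of its reciprocal $2x/N(x)$, which is the upper bound Step 2 needs. Everything else is direct substitution.
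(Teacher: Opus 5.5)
Your proof is correct and is essentially the paper's argument: $(C2)$ gives $2-\rho_k\geq 4\eta e$ and $\rho_k\geq 4\eta e$, item (v) of Lemma \ref{Nporps} controls $1/N(\lowerbound+\theta)$, and $e(\gamma-8\eta)\geq 1$ finishes. The only difference is your detour through item (iv) in Step 1, which is unnecessary because the paper simply combines $\beta_k\leq e$ with $N(\beta_k)\geq N(\lowerbound+\theta)$ from item (iii). On the other hand, you explicitly check that the first factor is positive before multiplying by the lower bound on $\rho_k$, which makes precise a step the paper leaves implicit.
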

\begin{proof}
Using the fact that $N$ is increasing by Lemma \ref{Nporps}, (iii) yields
\[
\left(2 -\rho_k-\frac{2 \beta_k}{N(\beta_k)}\eta\right) \rho_k \overset{(C2)}{\geq} \left(4 \eta e - \frac{2\beta_k}{N(\beta_k)}\eta \right) 4 \eta e\overset{(C1)^u}{\geq}\left(4 \eta e - \frac{2e\eta}{N(1/(\gamma-8\eta) + \theta)}\right)4 \eta e.
\]
Now using Lemma \ref{Nporps}, (v), and since $e \geq 1/(\gamma-8\eta)$ by $(C1)^u$, we obtain
\begin{align*}
\left(2 -\rho_k-\frac{2 \beta_k}{N(\beta_k)}\eta\right)\rho_k &\geq
\left(4 \eta e - 2 \eta e \left( 2- \frac{2\theta (\gamma - 8 \eta)^2}{(1+ \theta(\gamma-8\eta))(2\gamma-8\eta)}\right)\right) 4 \eta e\\
&= 16\eta^2 e^2 \frac{\theta (\gamma-8\eta)^2}{(1+\theta(\gamma-8\eta))(2\gamma - 8\eta)}\\
&\geq \frac{16 \eta ^2 \theta }{(1+\theta(\gamma-8\eta))(2\gamma - 8 \eta)}.
\end{align*}  
Notice that the last term is greater than $0$ by $(C1)^u$.
\end{proof}

We now provide the following counterpart to Lemma \ref{inertail_easy_ineqs}.

\begin{lemma}\label{inertail_easy_ineqs_lamda_k}
Let $f: K \times K \rightarrow \R$ be such that $(A2)$, $(A4)$ and $(A5)$ hold. Suppose that either among $(C1)^l$ or $(C1)^u$ hold. Suppose further that $x^{*} \in S(K,f)$. Then, for every $k \in \N$:
\[
d^2(x_{k+1},x^{*}) \leq d^2(y_k,x^{*}) - \left(\frac{2 -\rho_k-\frac{2 \beta_k\eta}{N(\beta_k)}}{\rho_k}\right)d^2(x_{k+1},y_k).
\]
where $N$ is defined as in (\ref{N}).
\end{lemma}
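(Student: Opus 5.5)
The plan is to apply Lemma \ref{GProposition6} and treat its two alternatives separately. In the first alternative we take $\lambda=\frac{1}{2}$. In the second we take the adaptive value $\lambda=\lambda_k:=N(\beta_k)$. In both cases the aim is to show that the term involving $d^2(z_k,x^{*})$ is nonpositive, so it can be dropped, and that the coefficient in front of $d^2(x_{k+1},y_k)$ is at least the one claimed.

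First we check that $\lambda_k$ is admissible. Under either $(C1)^l$ or $(C1)^u$ we have $\beta_k\in[\lowerbound,\upperbound]$, since $e\le\upperbound$. We also have $12\eta<\gamma$, as remarked after the statement of these conditions. Hence $N(\beta_k)$ is well-defined, and by Lemma \ref{Nporps}, (i) and (iii), $\lambda_k=N(\beta_k)\in[\frac{1}{2},1]\subseteq(0,1]$. So $\lambda_k$ may be used in \ref{inertial_ineq__long_nonbasic}.

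\textbf{Case \ref{inertial_ineq_nonbasic}.} This inequality holds for every $\lambda\in[0,1]$, so we may choose $\lambda=\frac{1}{2}$. The last term then becomes $-\frac{\rho_k}{2}(\gamma\beta_k-1)d^2(z_k,x^{*})$. Since $\beta_k\ge\lowerbound$, we get $\gamma\beta_k\ge\frac{\gamma}{\gamma-8\eta}>1$, so this term is $\le 0$, exactly as in the proof of Lemma \ref{inertail_easy_ineqs}. Moreover $\frac{2\beta_k\eta}{N(\beta_k)}\ge0$, so $\frac{2-\rho_k}{\rho_k}\ge\frac{2-\rho_k-2\beta_k\eta/N(\beta_k)}{\rho_k}$, and the claimed inequality follows.

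\textbf{Case \ref{inertial_ineq__long_nonbasic}.} Take $\lambda=\lambda_k$. The key identity comes from Lemma \ref{collection}: $c_k=d_k$ means
\[
\frac{\lambda_k}{2\beta_k}=\frac{\lambda_k(1-\lambda_k)(1+\gamma\beta_k)}{2\beta_k}-\eta.
\]
Multiplying by $2\beta_k/\lambda_k>0$ gives $1=(1-\lambda_k)(1+\gamma\beta_k)-\frac{2\eta\beta_k}{\lambda_k}$. Rearranging yields
\[
(1-\lambda_k)\gamma\beta_k-\lambda_k-\frac{2\eta\beta_k}{\lambda_k}=0.
\]
So the coefficient of $d^2(z_k,x^{*})$ in \ref{inertial_ineq__long_nonbasic} vanishes exactly. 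What remains is precisely the claimed inequality with $\lambda=N(\beta_k)$.

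The only delicate point is this algebraic step: verifying that the choice $\lambda_k=N(\beta_k)$ makes the $d^2(z_k,x^{*})$ coefficient vanish rather than merely stay nonnegative. Lemma \ref{collection} makes this immediate once its conclusion is rewritten as above. We also have to ensure that this choice is not used in the first case. There $\lambda=N(\beta_k)$ could be close to $1$ and give a positive contribution, which is why we switch to $\lambda=\frac{1}{2}$ in that case.
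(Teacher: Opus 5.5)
Your proof is correct and is essentially the paper's argument (Lemma \ref{GProposition6}, the identity from Lemma \ref{collection} removing the $d^2(z_k,x^{*})$-term of \ref{inertial_ineq__long_nonbasic} at $\lambda=N(\beta_k)$, and a comparison of the $d^2(x_{k+1},y_k)$-coefficients); the only difference is that you take $\lambda=\frac{1}{2}$ in \ref{inertial_ineq_nonbasic}, whereas the paper uses $\lambda=N(\beta_k)$ there too. Your stated reason for that switch is mistaken but harmless: the same identity gives $(1-N(\beta_k))\gamma\beta_k-N(\beta_k)=\frac{2\eta\beta_k}{N(\beta_k)}>0$, so $\lambda=N(\beta_k)$ never yields a positive contribution in \ref{inertial_ineq_nonbasic}.
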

\begin{proof}
From Lemma \ref{GProposition6} we know that for all $k\in\mathbb{N}$ that at least one of \ref{inertial_ineq_nonbasic} or \ref{inertial_ineq__long_nonbasic} holds, that is (using Lemma \ref{Nporps}, (iii)), that either
\begin{equation*}
d^2(x_{k+1},x^*) \leq d^2(y_k,x^*) - \left(\frac{2 -\rho_k}{\rho_k}\right)d^2(x_{k+1},y_k) - \rho_k\left((1-N(\beta_k))\gamma\beta_k - N(\beta_k)\right)d^2(z_k,x^*)
\end{equation*}
or
\begin{align*}
d^2(x_{k+1},x^*) &\leq d^2(y_k,x^*) - \left(\frac{2 -\rho_k-\frac{2 \beta_k\eta}{N(\beta_k)}}{\rho_k}\right)d^2(x_{k+1},y_k)  \\
&\hphantom{\leq\mbox{}}- \rho_k\left((1-N(\beta_k))\gamma\beta_k - N(\beta_k) - \frac{2\eta\beta_k}{N(\beta_k)}\right) d^2(z_k, x^*).
\end{align*}
We begin by arguing that, in both of the above inequalities, we can discard the last term. As $\rho_k \geq 0$, we do this by showing that
\[
\left((1-N(\beta_k))\gamma\beta_k - N(\beta_k)\right) \geq \left((1-N(\beta_k))\gamma\beta_k - N(\beta_k) - \frac{2\eta\beta_k}{N(\beta_k)}\right) \geq 0.
\]
The first inequality is obvious. The second is (by multiplying with $\frac{N(\beta_k)}{2\beta_k}$) equivalent to showing 
\begin{align*}
\left(\frac{N(\beta_k)(1-N(\beta_k))(1+\gamma\beta_k) }{2\beta_k}- \frac{N(\beta_k)}{2 \beta_k} - \eta\right) \geq 0.
\end{align*}
But now 
\begin{align*}
\left(\frac{N(\beta_k)(1-N(\beta_k))(1+\gamma\beta_k) }{2\beta_k}- \frac{N(\beta_k)}{2 \beta_k} - \eta\right)  = d_k - c_k,
\end{align*}
where $c_k $ and $d_k$ are as in Lemma \ref{collection}, which implies that $d_k-c_k= 0$ holds. The inequality now follows from the fact that 
\begin{equation*}
\left(\frac{2 -\rho_k-\frac{2 \beta_k\eta}{N(\beta_k)}}{\rho_k}\right) \leq \frac{2 -\rho_k}{\rho_k}.\qedhere
\end{equation*}
\end{proof}

In the same way as Lemma \ref{madness2}, we also obtain the following result:

\begin{lemma}[{extending \cite[Proposition 10]{GradLaraMarca24}}]\label{inertial_fejer_lamda_k}
Let $f: K \times K \rightarrow \R$ be such that $(A2)$, $(A4)$ and $(A5)$ hold. Suppose that either among $(C1)^l$ or $(C1)^u$ hold. Suppose further that $x^{*} \in S(K,f)$. Set $\varphi_k := d^2(x_k,x^{*})$ for every $k \in \N$. Then, for every $k \in \N$:
\[
\varphi_{k+1} - \varphi_k -\alpha_k(\varphi_k - \varphi_{k-1}) + \left(\frac{2 -\rho_k-\frac{2 \beta_k\eta}{N(\beta_k)}}{\rho_k}\right)d^2(x_{k+1},y_k) \leq (\alpha_k^2 + \alpha_k)d^2(x_k, x_{k-1})
\]
where $N$ is defined as in (\ref{N}).
\end{lemma}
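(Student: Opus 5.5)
The plan is to copy the proof of Lemma \ref{madness2}, replacing Lemma \ref{inertail_easy_ineqs} by its adaptive-$\lambda$ counterpart, Lemma \ref{inertail_easy_ineqs_lamda_k}.

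First, I control the extrapolation step. By definition $y_k=\exp_{x_k}(-\alpha_k\exp^{-1}_{x_k}x_{k-1})$ with $\alpha_k\in[0,\alpha]\subseteq[0,1)$. Lemma \ref{revCN} therefore applies with $x=x_k$, $y=x_{k-1}$, $w=x^*$ and $t=\alpha_k$, and gives
\[
d^2(y_k,x^*)\leq \varphi_k+\alpha_k(\varphi_k-\varphi_{k-1})+\alpha_k(1+\alpha_k)d^2(x_k,x_{k-1}).
\]

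Second, I control the proximal and relaxation steps. Lemma \ref{inertail_easy_ineqs_lamda_k} gives
\[
\varphi_{k+1}\leq d^2(y_k,x^*)-\left(\frac{2-\rho_k-\frac{2\beta_k\eta}{N(\beta_k)}}{\rho_k}\right)d^2(x_{k+1},y_k)
\]
under the same standing hypotheses $(A2)$, $(A4)$, $(A5)$ and either $(C1)^l$ or $(C1)^u$. Here $N(\beta_k)$ is well-defined, since both conditions give $\beta_k\geq\lowerbound$. It is also positive, because $N(\beta_k)\geq N(\lowerbound)=\frac12$ by Lemma \ref{Nporps}, (i) and (iii), so the coefficient makes sense.

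Finally, I substitute the first bound into the second. Moving $\varphi_k$, the term $\alpha_k(\varphi_k-\varphi_{k-1})$ and the $d^2(x_{k+1},y_k)$ term to the left-hand side yields exactly the claimed inequality. No sign information on the coefficient of $d^2(x_{k+1},y_k)$ is needed at this stage, since the argument is pure rearrangement.

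There is no real obstacle. The only point to check is that Lemma \ref{inertail_easy_ineqs_lamda_k} is invoked under hypotheses matching those here, in particular that $\beta_k$ lies in the domain of $N$. Both $(C1)^l$ and $(C1)^u$ ensure this.
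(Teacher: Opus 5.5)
Your proposal is correct and is exactly the paper's argument: the paper obtains this lemma ``in the same way as Lemma \ref{madness2}''. That is, it bounds $d^2(y_k,x^*)$ via Lemma \ref{revCN} and combines this with Lemma \ref{inertail_easy_ineqs_lamda_k}, which is a pure rearrangement.
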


We continue with the analogue of Lemma \ref{inertial_approx_fp}.

\begin{lemma}\label{inertial_approx_fp_N}
Let $f: K \times K \rightarrow \R$ be such that $(A2)$, $(A4)$ and $(A5)$ hold. Suppose that $(C1)^u$ and $(C2)$ hold. Suppose further that $x^{*} \in S(K,f)$. Let $S\in (0,\infty)$ be such that 
\[
\sum_{k = 0}^{\infty}\alpha_kd^2(x_k, x_{k-1}) \leq  S.
\]
Then
\begin{equation*}
\forall \varepsilon>0\ \forall n \in \N\ \exists k \in \N \left( n \leq k \leq \left\lceil \frac{(b + 2S) ((1+\theta (\gamma-8\eta))(2\gamma - 8\eta)}{16 \eta^2 (1-\alpha) \theta \varepsilon^2}\right \rceil + n +1 \text{ and } d(z_k,y_k)< \varepsilon\right),
\end{equation*}
where $b \geq d^2(x_0,x^{*})$.
\end{lemma}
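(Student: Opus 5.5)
The plan mirrors the proof of Lemma \ref{inertial_approx_fp}, with the fixed choice $\lambda=\frac{1}{2}$ replaced by the adaptive $\lambda_k=N(\beta_k)$. Set
\[
\varphi_k := d^2(x_k,x^{*}),\qquad \delta_k := (\alpha_k^2+\alpha_k)d^2(x_k,x_{k-1}),\qquad s_{k+1} := \left(\frac{2-\rho_k-\frac{2\beta_k\eta}{N(\beta_k)}}{\rho_k}\right)d^2(x_{k+1},y_k).
\]
Lemma \ref{inertial_fejer_lamda_k} says exactly that \eqref{phieq0} of Lemma \ref{madness0} holds with these choices. For Lemma \ref{madness0} to apply we also need $s_{k+1}\geq 0$. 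This follows from Lemma \ref{inertial_greater_lambda_k}: the product $(2-\rho_k-2\beta_k\eta/N(\beta_k))\rho_k$ is positive and $\rho_k>0$ by $(C2)$, so the coefficient is nonnegative. Since $\alpha_k\in[0,1)$, we have $\alpha_k^2+\alpha_k\leq 2\alpha_k$, and hence $\sum_k\delta_k\leq 2S$.

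Next we apply Lemma \ref{madness} with $b\geq \varphi_0$, with $2S$ in place of $S$, and with the tolerance $\varepsilon' := \frac{16\eta^2\theta}{(1+\theta(\gamma-8\eta))(2\gamma-8\eta)}\varepsilon^2$. This yields, for every $n$, some $k$ with
\[
n\leq k\leq \left\lceil \frac{b+2S}{(1-\alpha)\varepsilon'}\right\rceil+n+1 \quad\text{and}\quad s_{k+1}<\varepsilon'.
\]
Writing out $\varepsilon'$, the index bound is exactly the one claimed in the statement.

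It remains to convert $s_{k+1}<\varepsilon'$ into $d(z_k,y_k)<\varepsilon$. As in the proof of Lemma \ref{GProposition6}, the geodesic-line property gives $d(x_{k+1},y_k)=\rho_k d(z_k,y_k)$. Therefore
\[
s_{k+1}=\left(2-\rho_k-\frac{2\beta_k\eta}{N(\beta_k)}\right)\rho_k\, d^2(z_k,y_k)\geq \frac{16\eta^2\theta}{(1+\theta(\gamma-8\eta))(2\gamma-8\eta)}\,d^2(z_k,y_k),
\]
where the inequality is Lemma \ref{inertial_greater_lambda_k}. Combining this with $s_{k+1}<\varepsilon'$ gives $d^2(z_k,y_k)<\varepsilon^2$, which is the claim.

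The only point that needs care is the bookkeeping of the $\rho_k$ factors. The coefficient in $s_{k+1}$ carries $1/\rho_k$, and converting $d^2(x_{k+1},y_k)$ into $d^2(z_k,y_k)$ contributes $\rho_k^2$; together they leave exactly the product bounded in Lemma \ref{inertial_greater_lambda_k}. Everything else is a direct instantiation of earlier results.
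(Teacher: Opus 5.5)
Your proposal is correct and follows the paper's proof essentially step for step. It uses the same $\varphi_k$, $\delta_k$ and $s_{k+1}$, gets nonnegativity of $s_{k+1}$ from Lemma \ref{inertial_greater_lambda_k}, feeds Lemma \ref{madness} through Lemma \ref{inertial_fejer_lamda_k} with $2S$, and finishes with $d(x_{k+1},y_k)=\rho_k d(z_k,y_k)$ plus Lemma \ref{inertial_greater_lambda_k}. The only difference is cosmetic: you apply Lemma \ref{madness} directly with the rescaled tolerance $\frac{16\eta^2\theta}{(1+\theta(\gamma-8\eta))(2\gamma-8\eta)}\varepsilon^2$, which spells out the rescaling that the paper leaves to its closing ``put together, this yields the claim.''
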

\begin{proof}
Proceeding as in the proof of Lemma \ref{inertial_approx_fp}, define $\varphi_k := d^2(x_k,x^{*})$ and $\delta_k := (\alpha_k^2 + \alpha_k) d^2(x_k, x_{k-1})$ for all $k\geq 0$. We have
\begin{equation*}
s_{k+1} := \left(\frac{2 -\rho_k-\frac{2 \beta_k\eta}{N(\beta_k)}}{\rho_k}\right) d^2(x_{k+1},y_k)\geq 0
\end{equation*}
by Lemma \ref{inertial_greater_lambda_k} and $\sum_{k = 0}^{\infty} \delta_k \leq 2S$. Using Lemma \ref{inertial_fejer_lamda_k}, we get that Lemma \ref{madness} applies so that for $\varepsilon>0$ and $n\in\mathbb{N}$ as before, we have 
\[
\exists k \in \N \left( n \leq k \leq \left\lceil \frac{b + 2S}{(1-\alpha)\varepsilon}\right \rceil + n +1\text{ and } \left(\frac{2 -\rho_k-\frac{2 \beta_k\eta}{N(\beta_k)}}{\rho_k}\right) d^2(x_{k+1},y_k) < \varepsilon\right).
\]
Using that $\rho_kd(z_k,y_k)=d(x_{k+1},y_k)$, this implies that 
\begin{equation*}
\left(2 -\rho_k-\frac{2 \beta_k\eta}{N(\beta_k)}\right) \rho_k  d^2(z_k,y_k) < \varepsilon.
\end{equation*}
With Lemma  \ref{inertial_greater_lambda_k}, we obtain that 
\begin{equation*}
\frac{16\eta^2 \theta}{(1+\theta(\gamma - 8 \eta))(2\gamma - 8 \eta)} d^2(z_k,y_k) < \varepsilon
\end{equation*}
for such a $k$. Put together, this yields the claim.
\end{proof}

Towards showing a quantitative convergence result not only for $\{x_k\}_k$, but also for $\{y_k\}_k$ and $\{z_k\}_k$, we establish an analogous result to Lemma \ref{inertial_sum_good}.

\begin{lemma}\label{inertial_sum_good_N}
Let $f: K \times K \rightarrow \R$ be such that $(A2)$, $(A4)$ and $(A5)$ hold. Suppose that $(C1)^u$ and $(C2)$ hold. Suppose further that $x^{*} \in S(K,f)$. Let $M: (0,\infty) \rightarrow \N$ be such that
\[
\forall\varepsilon>0\left( \sum_{k = M(\varepsilon)}^{\infty}\alpha_kd^2(x_k,x_{k-1}) < \varepsilon\right).
\]
Then:
\begin{gather*}
\forall\varepsilon>0\ \forall k \in \N\forall n > k \geq  M\left(\frac{8 \eta^2 \theta (1-\alpha) \varepsilon^2}{(1+\theta(\gamma-8\eta))(2\gamma-8\eta)}\right)  \\ \left(d(z_{n}, y_n) < \sqrt{\frac{ (1+\theta(\gamma-8\eta))(2\gamma-8\eta)}{16 \eta^2 \theta(1-\alpha) }}d(x_k, x^{*}) + \varepsilon\right).
\end{gather*}
\end{lemma}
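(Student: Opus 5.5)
We mirror the proof of Lemma \ref{inertial_sum_good}, replacing the constant $\frac{16\eta^2\Theta}{\gamma-8\eta}$ from Lemma \ref{inetial_greater_half} by the constant
\[
D:=\frac{16\eta^2\theta}{(1+\theta(\gamma-8\eta))(2\gamma-8\eta)}
\]
from Lemma \ref{inertial_greater_lambda_k}.

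Concretely, set $\varphi_k := d^2(x_k,x^{*})$, $\delta_k := (\alpha_k^2+\alpha_k)d^2(x_k,x_{k-1})$ and
\[
s_{k+1} := \left(\frac{2-\rho_k-\frac{2\beta_k\eta}{N(\beta_k)}}{\rho_k}\right)d^2(x_{k+1},y_k).
\]
By Lemma \ref{inertial_greater_lambda_k} we have $s_{k+1}\geq 0$, and by Lemma \ref{inertial_fejer_lamda_k} condition \eqref{phieq0} holds. Lemma \ref{madness0}, applied from index $k$ and with $n\to\infty$, then gives
\[
\sum_{i=k+1}^\infty s_i \leq \frac{1}{1-\alpha}\varphi_k+\frac{1}{1-\alpha}\sum_{i=k}^\infty\delta_i
\]
for every $k$. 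Since $\delta_i\leq 2\alpha_i d^2(x_i,x_{i-1})$, choosing $k\geq M(\kappa)$ yields
\[
\sum_{i=k}^\infty\delta_i<2\kappa,
\qquad\text{so}\qquad
\sum_{i=k+1}^\infty s_i< \frac{1}{1-\alpha}d^2(x_k,x^*)+\frac{2\kappa}{1-\alpha}.
\]

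Next we bound each term of the sum from below. For each $i$, the identity $\rho_i d(z_i,y_i)=d(x_{i+1},y_i)$ (as in Lemma \ref{GProposition6}) together with Lemma \ref{inertial_greater_lambda_k} gives
\[
s_{i+1}=\left(2-\rho_i-\tfrac{2\beta_i\eta}{N(\beta_i)}\right)\rho_i\, d^2(z_i,y_i)\geq D\, d^2(z_i,y_i).
\]
For $n>k$ the term $s_{n+1}$ appears in the tail sum, so
\[
D\,d^2(z_n,y_n)\leq \sum_{i=k+1}^\infty s_i.
\]
(This is exactly why the statement asks for $n>k$ rather than $n\geq k$, although $s_{k+1}$ would also be covered.) Hence
\[
d^2(z_n,y_n)<\frac{1}{D(1-\alpha)}d^2(x_k,x^*)+\frac{2\kappa}{D(1-\alpha)}.
\]

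To finish, choose $\kappa$ so that the last term equals $\varepsilon^2$, namely $\kappa=D(1-\alpha)\varepsilon^2/2$. Substituting $D$ gives $\kappa=\frac{8\eta^2\theta(1-\alpha)\varepsilon^2}{(1+\theta(\gamma-8\eta))(2\gamma-8\eta)}$, which is precisely the argument of $M$ in the statement. Taking square roots and using $\sqrt{a+b}\leq\sqrt a+\sqrt b$ gives
\[
d(z_n,y_n)<\sqrt{\tfrac{1}{D(1-\alpha)}}\,d(x_k,x^*)+\varepsilon,
\]
and $\frac{1}{D(1-\alpha)}$ equals the constant in the claim. The only real care needed is this bookkeeping of constants, so that the chosen $\kappa$ exactly matches the argument of $M$. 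Everything else is a direct reuse of Lemmas \ref{madness0}, \ref{inertial_fejer_lamda_k} and \ref{inertial_greater_lambda_k}, so there is no substantial obstacle.
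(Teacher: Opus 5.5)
Your proof is correct and follows the paper's own argument: you get the tail estimate from Lemma \ref{madness0} via Lemma \ref{inertial_fejer_lamda_k}, then the termwise bound $s_{i+1}\geq D\,d^2(z_i,y_i)$ from $\rho_i d(z_i,y_i)=d(x_{i+1},y_i)$ and Lemma \ref{inertial_greater_lambda_k}, and your choice $\kappa=D(1-\alpha)\varepsilon^2/2$ matches the argument of $M$ in the statement. The only quibble is your parenthetical: the argument already works for all $n\geq k$, since $s_{k+1}$ lies in the tail, so the restriction $n>k$ is just a weaker claim and not something your proof needs.
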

\begin{proof}
We argue as in Lemma \ref{inertial_sum_good} to obtain that
\begin{equation*}
\sum_{i = k+1}^{\infty}\left(\frac{2 -\rho_i-\frac{2 \beta_i\eta}{N(\beta_i)}}{\rho_i}\right)d^2(x_{i+1},y_i) < \frac{1}{1-\alpha}d^2(x_k, x^{*}) + \varepsilon.
\end{equation*} 
for any $\varepsilon > 0$ and any $k \geq  M((1-\alpha)\varepsilon/2)$. Proceeding as in the proof of Lemma \ref{inertial_sum_good}, using $\rho_kd(z_k,y_k)=d(x_{k+1},y_k)$ and Lemma \ref{inertial_greater_lambda_k}, yields the result.
\end{proof}

The following is now our second main quantitative convergence result for the sequences $\{x_k\}_k$, $\{y_k\}_k$ and $\{z_k\}_k$ generated by \eqref{RIPPA}, set in the context of the assumption $(C1)^u$.

\begin{theorem}\label{THEOREM3a}
Let $X$ be a Hadamard manifold and let $K\subseteq X$ be metrically closed such that the complete geodesic lines determined by any two points of $K$ lie in $K$. Let $f: K \times K \rightarrow \R$ be such that $(A2)$, $(A4)$ and $(A5)$ hold. Fix $\alpha, \rho \in [0,1)$ and $x_0, x_{-1} \in K$ as well as $\{ \beta_{k}\}_k \subseteq (0,\infty)$, $\{ \alpha_{k}\}_k \subseteq [0,\alpha]$ and $\{\rho_k\}_k \subseteq [1-\rho,1+\rho]$ such that $(C1)^u$ and $(C2)$ hold. Assume that \eqref{RIPPA} is well-defined and let $\{x_k\}_k, \{y_k\}_k$ and $\{z_k\}_k$ be its generated sequences. Suppose further that $S(K,f) \neq \emptyset$. If 
\[
\sum_{k=0}^\infty\alpha_k d^2(x_k,x_{k-1})<+\infty,
\]
then the sequences $\{x_k\}_k$, $\{y_k\}_k$ and $\{z_k\}_k$ converge to the (unique) equilibrium point $x^{*}$ of $f$. Further, we have the following rate of convergence: If $M: (0,\infty) \rightarrow \N $ and $S \in (0,\infty)$ are such that $M$ is decreasing and
\[
\sum_{k = 0}^{\infty}\alpha_kd^2(x_k, x_{k-1}) \leq  S\text{ as well as }\forall \varepsilon > 0\left(\sum_{k = M(\varepsilon)}^{\infty}\alpha_kd^2(x_k, x_{k-1}) < \varepsilon\right),
\]
then
\begin{equation*}
\forall \varepsilon > 0\ \forall k \geq \left(\left \lceil \frac{C'}{\varepsilon^2} \right \rceil+ M(E' \varepsilon^2) + 2\right) \left(d(x_k, x^{*}), d(y_k, x^{*}), d(z_k, x^{*})< \varepsilon\right),
\end{equation*}
where 
\[
C' := \mathrm{max}\left\{1, \frac{(2\gamma-8\eta)^2}{\eta^3 \theta (1-\alpha)}\right\}\cdot\frac{64 (b+2S)(2\gamma-8\eta)(3\gamma-16\eta)^2}{\eta^3\theta(1-\alpha)^2(3\gamma-28\eta)}
\]
and
\[
E' := \mathrm{min}\left\{1, \frac{\eta^3 \theta (1-\alpha)}{32 (2\gamma-8\eta)^2}\right\}\cdot\frac{(1-\alpha)}{32} 
\]
with $b \geq d^2(x_0,x^{*})$.
\end{theorem}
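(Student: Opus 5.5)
The plan is to mirror the proof of Theorem \ref{THEOREM3}, replacing Lemmas \ref{inertial_approx_fp} and \ref{inertial_sum_good} by their $(C1)^u$-analogues, Lemmas \ref{inertial_approx_fp_N} and \ref{inertial_sum_good_N}. Only the quantitative statement needs proof, since convergence follows from it.

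Fix $\varepsilon>0$. First I apply Lemma \ref{inertial_approx_fp_N} with $n:=M((1-\alpha)\varepsilon^2/32)$ and target accuracy
\[
\frac{\sqrt{1-\alpha}}{64}\cdot\frac{3\gamma-28\eta}{3\gamma-16\eta}\,\varepsilon .
\]
This yields a $k$ with
\[
M\left(\frac{(1-\alpha)\varepsilon^2}{32}\right)\leq k\leq \left\lceil\frac{T'}{\varepsilon^2}\right\rceil+M\left(\frac{(1-\alpha)\varepsilon^2}{32}\right)+1
\]
at which $d(z_k,y_k)$ is below that threshold. Here $T'$ is the constant $T$ from the proof of Theorem \ref{THEOREM3}, with the factor $(\gamma-8\eta)/(16\eta^2\Theta)$ replaced by $(1+\theta(\gamma-8\eta))(2\gamma-8\eta)/(16\eta^2\theta)$.

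Next I derive the bound \eqref{eq3}, $d(z_k,x^*)<\frac{\sqrt{1-\alpha}}{8}\varepsilon$. The case analysis in Theorem \ref{THEOREM3} used only $\beta_k\geq\lowerbound$, which $(C1)^u$ also gives, so it carries over unchanged. Case 1 uses $\lambda=1/8$, Case 2a is immediate, and Case 2b uses $\lambda=3/8$ together with $12\eta<\gamma$.

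From \eqref{eq3}, the relation $\rho_k d(z_k,y_k)=d(x_{k+1},y_k)$ and $\rho_k\leq 2$, I get $d(x_{k+1},x^*)<\frac{7}{32}\sqrt{1-\alpha}\,\varepsilon$. The quasi-Fej\'er estimate of Lemma \ref{almostfejer} holds under $(C1)^u$ and needs $k\geq M((1-\alpha)\varepsilon^2/32)$. It then gives $d(x_n,x^*)<\varepsilon$ for all $n\geq k+1$, which is the rate for $\{x_k\}_k$.

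For $\{z_k\}_k$ and $\{y_k\}_k$ I follow the end of the proof of Theorem \ref{THEOREM3}. Write $D:=(1+\theta(\gamma-8\eta))(2\gamma-8\eta)/(16\eta^2\theta(1-\alpha))$.
\begin{enumerate}
\item Apply Lemma \ref{inertial_sum_good_N} with accuracy $\varepsilon/16$ to get $d(z_n,y_n)<\sqrt{D}\,d(x_j,x^*)+\varepsilon/16$ for all $n>j$.
\item Choose $j$ large enough, using the rate for $\{x_k\}_k$ at accuracy $\varepsilon/(16\sqrt{D})$, so that $\sqrt{D}\,d(x_j,x^*)<\varepsilon/16$. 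This makes $d(z_n,y_n)<\varepsilon/8$.
\item Conclude with $d(z_n,x^*)\leq 3d(z_n,y_n)+d(x_{n+1},x^*)<\frac78\varepsilon$ and $d(y_n,x^*)\leq d(z_n,y_n)+d(z_n,x^*)<\varepsilon$.
\end{enumerate}
Since $M$ is decreasing, all the thresholds combine into a bound of the form $\lceil C'/\varepsilon^2\rceil+M(E'\varepsilon^2)+2$.

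The main obstacle is bookkeeping rather than ideas. I must check that the maximum of the required indices is dominated by the stated $C'$ and $E'$. This requires simplifications such as $1+\theta(\gamma-8\eta)\leq$ a bound via $e\leq\upperbound$, so that $\theta\leq\upperbound$ and $1+\theta(\gamma-8\eta)\leq\gamma/(4\eta)$. I will use this together with $\frac{\gamma}{4\eta}(2\gamma-8\eta)\leq\frac{(2\gamma-8\eta)^2}{4\eta}$ to absorb these factors into the powers of $(2\gamma-8\eta)$ and $\eta^{3}$ appearing in $C'$ and $E'$.
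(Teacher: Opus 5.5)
Your proposal is correct and follows the paper's proof essentially step for step. It uses the same choice of $k$ via Lemma \ref{inertial_approx_fp_N} with the same threshold and $T'$, the unchanged case analysis for \eqref{eq3} (which indeed only needs $\beta_k\geq\lowerbound$ and $12\eta<\gamma$), Lemma \ref{almostfejer} for $\{x_k\}_k$, and Lemma \ref{inertial_sum_good_N} combined with the rate for $\{x_k\}_k$ to handle $\{z_k\}_k$ and $\{y_k\}_k$.

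One caution on your bookkeeping remark: to land exactly on the stated $C'$ you need the sharper consequence $\theta\leq\upperbound-\lowerbound$ of $(C1)^u$, that is $1+\theta(\gamma-8\eta)\leq\frac{\gamma-8\eta}{4\eta}$, together with $\gamma-8\eta\leq 3\gamma-28\eta$. Your cruder estimate $1+\theta(\gamma-8\eta)\leq\frac{\gamma}{4\eta}$ is too weak to certify the $C'$ bound when $\gamma$ is close to $12\eta$.
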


\begin{proof}
The proof proceeds in exactly the same way as the proof of Theorem \ref{THEOREM3}, with the only difference that we refer to Lemma \ref{inertial_approx_fp_N} instead of Lemma \ref{inertial_approx_fp} and Lemma \ref{inertial_sum_good_N} instead of Lemma \ref{inertial_sum_good}. We in that way only provide a sketch. Write
\[
T' := \frac{64^2(b + 2S) ((1+\theta (\gamma-8\eta))(2\gamma - 8\eta)(3\gamma - 16 \eta)^2}{16 \eta^2 (1-\alpha)^2 (3\gamma - 28 \eta)^2 \theta}.
\]
Given $\varepsilon >0$, using Lemma \ref{inertial_approx_fp_N}, take
\[
M((1-\alpha)\varepsilon^2/32)  \leq k \leq \left \lceil\frac{T'}{\varepsilon^2}\right \rceil + M((1-\alpha)\varepsilon^2/32) +1
\]
such that
\[
d(z_k,y_k) <\frac{\sqrt{1-\alpha}}{64}\frac{3\gamma - 28\eta}{ (3\gamma-16\eta)} \varepsilon < \frac{\sqrt{1-\alpha}}{32}\varepsilon.
\]
Proceeding similarly as in the proof of Theorem \ref{THEOREM3}, one obtains
\[
\forall n  \geq \left \lceil \frac{T'}{\varepsilon^2}\right \rceil + M((1-\alpha)\varepsilon^2/32)  +2 \left( d(x_n, x^{*})< \frac{\varepsilon}{2}\right).
\]
Again, as $M: (0,\infty) \rightarrow \N$ is decreasing, Lemma \ref{inertial_sum_good_N} combined with the above yields
\[
d(z_n, y_n) < \sqrt{\frac{(1+\theta(\gamma-8\eta))(2\gamma-8\eta)}{16\eta^2 \theta (1-\alpha)}}d(x_j ,x^{*}) + \frac{\varepsilon}{16}.
\]
for any $n\geq j$ with
\[
j \geq\left \lceil \frac{T'}{\varepsilon^2} \frac{8^2(1+\theta(\gamma-8\eta))(2\gamma-8\eta)}{16 \eta^2 \theta(1-\alpha)}\right \rceil+ M\left(\frac{\eta^2 \theta (1-\alpha)^2\varepsilon^2}{128(1+\theta(\gamma-8\eta))(2\gamma-8\eta)}\right) +2.
\]
For such an $n$, similar to before, we get
\[
d(x_n, x^{*}) < \frac{\varepsilon^2}{16 \sqrt{\frac{(1+\theta(\gamma-8\eta))(2\gamma-8\eta)}{16\eta^2 \theta (1-\alpha)}}}
\]
Combining the above yields $d(z_n, y_n) < \frac{\varepsilon}{8}$ for such an $n$. One now obtains the result in the same way as in the proof of Theorem \ref{THEOREM3}, using various (tedious, but trivial) simplifications along the way.
\end{proof}

Remarks \ref{rem:meta} -- \ref{rem:quasiFejer} also apply in this context of $(C1)^u$, that is to Theorem \ref{THEOREM3a}.\\

\noindent{\textbf{Acknowledgments:}} This paper is a revised version of parts of the master thesis \cite{Despres2026} of the first author, written under the supervision of Prof.\ Dr.\ Ulrich Kohlenbach at TU Darmstadt and co-supervised by the second author. Both authors want to thank Prof.\ Kohlenbach for many insightful comments on the topic of the thesis \cite{Despres2026}. The authors also want to thank Sorin-Mihai Grad and Felipe Lara for helpful conversations on and around the topic of this paper.

\bibliographystyle{plain}
\bibliography{ref}

\end{document}